\newtheorem{theorem}{Theorem}
\newtheorem{claim}[theorem]{Claim}
\newtheorem{lemma}[theorem]{Lemma}
\newtheorem{proposition}[theorem]{Proposition}
\theoremstyle{definition}
\newtheorem{definition}[theorem]{Definition}
\newtheorem{notation}[theorem]{Notation}
\theoremstyle{remark}
\newtheorem{remark}[theorem]{Remark}
\newtheorem{case}{Case}
\numberwithin{theorem}{section}
\numberwithin{equation}{section}
\def\XXint#1#2#3{{\setbox0=\hbox{$#1{#2#3}{\int}$}
     \vcenter{\hbox{$#2#3$}}\kern-.5\wd0}}
\newcommand{\dd}{\; \mathrm{d}}
\newcommand{\bbH}{\mathbb{H}}
\newcommand{\bbE}{\mathbb{E}}
\newcommand{\bbG}{\mathbb{G}}
\newcommand{\bbN}{\mathbb{N}}
\newcommand{\bbR}{\mathbb{R}}
\newcommand{\bbQ}{\mathbb{Q}}
\begin{document}
\title[UDS and Maximal Directional Derivatives in Carnot Groups]{Universal Differentiability Sets and Maximal Directional Derivatives in Carnot Groups}

\author[Enrico Le Donne]{Enrico Le Donne}
\address[Enrico Le Donne]{Department of Mathematics and Statistics, University of Jyvaskyla, 40014 Jyvaskyla, Finland}
\email[Enrico Le Donne]{Enrico.E.LeDonne@jyu.fi}

\author[Andrea Pinamonti]{Andrea Pinamonti}
\address[Andrea Pinamonti]{Department of Mathematics, University of Trento, Via Sommarive 14, 38050 Povo (Trento), Italy}
\email[Andrea Pinamonti]{andrea.pinamonti@unitn.it}

\author[Gareth Speight]{Gareth Speight}
\address[Gareth Speight]{Department of Mathematical Sciences, University of Cincinnati, 2815 Commons Way, Cincinnati, OH 45221, United States}
\email[Gareth Speight]{Gareth.Speight@uc.edu}

\keywords{Carnot group, Lipschitz map, Pansu differentiable, directional derivative, universal differentiability set}

\date{\today}

\begin{abstract}
We show that every Carnot group $\bbG$ of step 2 admits a Hausdorff dimension one `universal differentiability set' $N$ such that every Lipschitz map $f\colon \bbG \to \bbR$ is Pansu differentiable at some point of $N$. This relies on the fact that existence of a maximal directional derivative of $f$ at a point $x$ implies Pansu differentiability at the same point $x$. We show that such an implication holds in Carnot groups of step 2 but fails in the Engel group which has step 3.
\end{abstract}

\maketitle

\section{Introduction}\label{intro}
Rademacher's theorem asserts that every Lipschitz function $f\colon \bbR^{n}\to \bbR^{m}$  is differentiable almost everywhere with respect to Lebesgue measure. One direction of research has been to study Rademacher's theorem for more general measures \cite{AM14, DhR} and in more general spaces, including Carnot groups \cite{Pan89}, Banach spaces \cite{LPT13}, and metric measure spaces \cite{Che99, Bat15}. 

Another direction of research asks whether Rademacher's theorem admits a converse: given a Lebesgue null set $N\subset \bbR^{n}$, does there exist a Lipschitz map $f\colon \bbR^{n}\to \bbR^{m}$ which is differentiable at no point of $N$? The answer is yes if and only if $n\leq m$. The solution is easy if $n=m=1$ \cite{Zah46}, while the cases $n>m$ are covered in \cite{Pre90, PS15}, and the cases $n\leq m$ in \cite{ACP10, CJ15}. In the case $n>m=1$, \cite{DM11, DM12, DM14} strengthened the result of \cite{Pre90}, showing that $\bbR^{n}$ contains a compact Hausdorff (even Minkowski) dimension one set containing a point of differentiability for every Lipschitz map $f\colon \bbR^{n}\to \bbR$. Sets containing a point of differentiability for all real-valued Lipschitz functions were called \emph{universal differentiability sets}.

The aim of this paper is to combine these directions of research and investigate universal differentiability sets (UDS) in Carnot groups (Definition \ref{Carnot}). A Carnot group is a Lie group whose Lie algebra admits a stratification. This decomposes the Lie algebra as a direct sum of vector subspaces, the first of which generates the other subspaces via Lie brackets. The number of subspaces is called the step of the Carnot group and to some extent indicates its complexity. Carnot groups have a rich geometric structure, including translations, dilations, Carnot-Carath\'{e}odory (CC) distance, and a Haar measure \cite{CDPT07, Gro96, LeD, Mon02, Spe14, Vit14}.

By replacing Euclidean translations and dilations with translations and dilations in the Carnot group, one can define Pansu differentiability of functions between Carnot groups (Definition \ref{pansudifferentiability}). Pansu generalized Rademacher's theorem to Carnot groups, showing that Lipschitz functions between Carnot groups are Pansu differentiable almost everywhere with respect to the Haar measure (Theorem \ref{pansutheorem}) \cite{Pan89, MPS17}. This can be applied to show that every Carnot group (other than Euclidean space itself) contains no subset of positive measure which bi-Lipschitz embeds into a Euclidean space \cite{HajMal, Mag00, Sem}.

The study of UDS in Carnot groups began in \cite{PS16}, where the second and third authors showed that the Heisenberg group contains a measure zero UDS. Our main theorem extends this result to more general Carnot groups. By CC-Hausdorff dimension of a set $N\subset \bbG_{r}$ we will mean the Hausdorff dimension of $N$ with respect to the Carnot-Carath\'eodory distance.

\begin{theorem}\label{maintheorem}
Let $\mathbb{G}$ be a step 2 Carnot group. Then there exists a set $N\subset \mathbb{G}$ of CC-Hausdorff dimension one such that every Lipschitz function $f\colon \mathbb{G} \to \mathbb{R}$ is Pansu differentiable at a point of $N$.
\end{theorem}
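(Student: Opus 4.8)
The plan is to mirror the strategy of Doré--Maleva \cite{DM11,DM14} in the Euclidean case, but to carry it out using the sub-Riemannian (Pansu) derivative and the structure of step 2 groups. The essential new input will be a \emph{maximality-implies-differentiability} principle: if $f\colon\bbG\to\bbR$ is Lipschitz and at a point $x$ the directional derivative $f'(x;v)$ in some horizontal direction $v$ attains the largest possible value among all horizontal directions (of unit length), then $f$ is in fact Pansu differentiable at $x$. This is exactly the phenomenon flagged in the abstract, and it is valid in step 2. Granting this, the proof proceeds in the following steps.

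\textbf{Step 1: Construction of the set $N$.} I would fix a horizontal left-invariant metric on $\bbG$ and build $N$ as a countable union of (pieces of) horizontal lines, more precisely as a nested-intersection/limit of finite unions of horizontal curves arranged to be ``dense enough'' in direction and position. Concretely, one takes a sequence of finer and finer finite families of horizontal line segments; at stage $k$ one has a $1$-dimensional set $N_k$, and one arranges $N_{k+1}\subset$ a small neighbourhood of $N_k$ so that the limiting set $N=\bigcap_k (\text{neighbourhood of }N_k)$ has CC-Hausdorff (indeed Minkowski) dimension one, while still meeting every short horizontal segment in every direction through a dense set of basepoints, up to arbitrarily small error. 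The verification that $\dim_{\mathcal H} N=1$ is a packing/covering computation identical in spirit to the Euclidean one, since horizontal segments are rectifiable of dimension one in the CC metric.

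\textbf{Step 2: Selecting the point of differentiability via a maximal directional derivative.} Given a Lipschitz $f\colon\bbG\to\bbR$, consider the functional assigning to a pair $(x,v)$, with $x\in N$ and $v$ a unit horizontal vector ``tangent to $N$ at $x$'', the directional derivative $f'(x;v)$ wherever it exists. Because $N$ is assembled from horizontal segments, along each such segment $f$ restricted to the segment is Lipschitz on $\bbR$, hence differentiable a.e., so these directional derivatives exist on a large subset of $N$. One then runs the Doré--Maleva ``almost maximal slope'' scheme: choose along $N$ a point $x_\infty$ and direction $v_\infty$ at which $f'(x_\infty;v_\infty)$ is essentially as large as it can be, and \emph{at the same time} the point is approached by the construction so that no nearby direction in $N$ gives a substantially larger slope. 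The careful bookkeeping here — passing to limits of basepoints and directions while keeping the slope nearly maximal and controlling the error terms coming from the metric — is where the construction of $N$ in Step 1 must be coordinated with the selection, and this is the main obstacle of the proof.

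\textbf{Step 3: From maximal directional derivative to Pansu differentiability.} Finally I invoke the step 2 maximality principle: at the point $x_\infty\in N$ produced in Step 2, the horizontal directional derivative is maximal (in the limiting, ``almost locally maximal'' sense that the scheme guarantees), and in a step 2 Carnot group this forces $f$ to be Pansu differentiable at $x_\infty$. Since $x_\infty\in N$, this completes the proof. The two genuinely substantive pieces are therefore (a) the proof that existence of a maximal directional derivative implies Pansu differentiability in step 2 (the algebraic heart, using that all brackets of horizontal vectors lie in the second — and last — layer), and (b) the compatibility of the dimension-one construction with the maximal-slope selection; routine work includes the Lipschitz-on-a-line differentiability facts, the Hausdorff dimension estimate, and the passage to convergent subsequences of points and directions.
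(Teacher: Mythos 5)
Your overall strategy coincides with the paper's: build $N$ as a $G_\delta$ hull of countably many horizontal line segments of CC-Hausdorff dimension one, run the Preiss/Dor\'e--Maleva almost-maximal-directional-derivative scheme along $N$ (including the $\bbG$-linear perturbation of $f$), and conclude via the step 2 principle that (almost) maximal horizontal directional derivatives force Pansu differentiability --- a principle the paper establishes by proving that the CC distance is Pansu differentiable at points $\exp E$, $E\in V_1\setminus\{0\}$, and which genuinely fails in the step 3 Engel group.

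There is, however, one concrete gap in your plan: you propose to carry out the whole construction directly in an arbitrary step 2 group $\bbG$, whereas the paper first works in the \emph{free} Carnot group $\bbG_r$ of step 2 and only afterwards transfers the UDS to $\bbG$ via a Lie group homomorphism $F\colon\bbG_r\to\bbG$ preserving the horizontal layer (such an $F$ admits isometric lifts of horizontal curves, so $F(N)$ is again a dimension-one UDS). Freeness is used essentially at two points that your outline needs. First, any linear isometry of $V_1$ extends to an isometric automorphism of $\bbG_r$ (Lemma 14.1.4 of Bonfiglioli--Lanconelli--Uguzzoni), which is what allows one to normalize the direction $E$ to $X_1$ before constructing the perturbed curves and before proving differentiability of the distance; in a general step 2 group most rotations of $V_1$ do \emph{not} extend to group automorphisms, so this normalization is unavailable. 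Second, the explicit horizontal curves joining $0$ to a point $y$ with the sharp Lipschitz bounds of the form $y_1\bigl(1+O(A^2/y_1^4)\bigr)^{1/2}$ --- needed both for differentiability of the CC distance at horizontal points and for the ``bend a horizontal line through a nearby target point with Lipschitz cost $1+\eta\Delta$'' lemma at the heart of Step 2 --- are built coordinate-by-coordinate in the second layer of $\bbG_r$, assigning one subinterval to each bracket coordinate $x_{ij}$ and exploiting that these coordinates are independent in the free group. Without the free-group reduction you would have to redo these quantitative estimates separately for every possible second layer. So either insert the reduction to $\bbG_r$ followed by the push-forward of the UDS, or supply group-specific replacements for these two ingredients; as written, Steps 1 and 2 of your proposal cannot be executed verbatim in a non-free step 2 group.
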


Techniques to construct measure zero UDS originate from the work of Preiss \cite{Pre90}. The main result of \cite{Pre90} was that if the norm of a Banach space $E$ is Frechet differentiable away from the origin, then every Lipschitz function is Frechet differentiable in a dense set of points. This relied upon the idea that if $\|e\|=1$ and $f'(x,e)=\mathrm{Lip}(f)$, then $f$ is Frechet differentiable at $x$ \cite{Fit84}. Since an arbitrary Lipschitz function need not admit such a maximal directional derivative, Preiss introduced a notion of `almost maximal' directional derivatives. He showed these exist for every Lipschitz function (possibly after a linear perturbation) and suffice for differentiability. Since only directional derivatives are involved, this procedure can be carried out inside a measure zero subset of $\bbR^{n}$ containing sufficiently many lines, giving a measure zero UDS in $\bbR^{n}$ as a corollary.

After recalling the necessary background in Section \ref{preliminaries}, we begin by investigating maximal directional derivatives in Carnot groups in Section \ref{strongmaximal}. Fix an inner product norm $\omega$ on the horizontal layer $V_{1}$ of a Carnot group $\bbG$, with corresponding CC distance $d$ (Definition \ref{carnotdistance}). Using the notation $d(x)=d(x,0)$, our replacement for differentiability of the norm in Banach spaces is as follows.

\begin{definition}\label{defdifferentiabilityofdistance}
We say that the CC distance $d$ in $\bbG$ is \emph{differentiable in horizontal directions} if it is Pansu differentiable at every point $u$ of the form $u=\exp E$ for some $E\in V_{1}\setminus \{0\}$.
\end{definition}

The directional derivative $Ef(x)$ of a Lipschitz function $f\colon \bbG \to \bbR$ at a point $x\in \bbG$ in the direction of $E\in V_{1}$ is defined in the natural way, by differentiating the composition of $f$ with the flow $t\mapsto x\exp tE$ (Definition \ref{defdirectionalderivative}). The relationship between directional derivatives and Pansu differentiability was investigated using porosity in \cite{PS16pansu}. It is not hard to show that $|Ef(x)|\leq \omega(E)\mathrm{Lip}(f)$. Combining the results of Theorem \ref{maximalimpliesdiff} and Proposition \ref{Ed=1} gives the following equivalence result.

\begin{proposition}\label{equivalence}
In a Carnot group $\bbG$, the CC distance is differentiable in horizontal directions if and only if the following implication holds: whenever $f\colon \bbG \to \bbR$ is Lipschitz and there exist $x\in \bbG$ and $E\in V_{1}$ with $\omega(E)=1$ and $Ef(x)=\mathrm{Lip}_{\bbG}(f)$, then $f$ is differentiable at $x$.
\end{proposition}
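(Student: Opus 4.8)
The plan is to read off the equivalence from the two ingredients already in hand: Theorem \ref{maximalimpliesdiff} gives one implication outright, and Proposition \ref{Ed=1} records that the distance function itself always possesses a maximal directional derivative at the relevant points, which drives the converse. Beyond invoking these, the argument is just bookkeeping with normalizations.

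For the implication ``$d$ differentiable in horizontal directions $\Rightarrow$ a unit-speed maximal directional derivative forces differentiability'', I would simply note that this is exactly the content of Theorem \ref{maximalimpliesdiff}: if $f\colon\bbG\to\bbR$ is Lipschitz, $\omega(E)=1$, and $Ef(x)=\mathrm{Lip}_\bbG(f)$, that theorem already concludes that $f$ is Pansu differentiable at $x$, under precisely the hypothesis that $d$ is differentiable in horizontal directions.

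For the converse, assume the stated implication holds and fix $E\in V_1\setminus\{0\}$; I want $d$ Pansu differentiable at $u=\exp E$. Set $E_0=E/\omega(E)$, so $\omega(E_0)=1$. The function $x\mapsto d(x)=d(x,0)$ is $1$-Lipschitz by the triangle inequality, and in fact $\mathrm{Lip}_\bbG(d)=1$ because the horizontal line $t\mapsto\exp(tE_0)$ realizes the distance to the origin. Since $E$ and $E_0$ are parallel they commute, so $u\exp(tE_0)=\exp\big((\omega(E)+t)E_0\big)$, whence $d\big(u\exp(tE_0)\big)=\omega(E)+t$ for $t$ near $0$; this is the computation recorded in Proposition \ref{Ed=1}. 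Differentiating at $t=0$ gives $E_0d(u)=1=\mathrm{Lip}_\bbG(d)$, so applying the assumed implication to $f=d$, $x=u$, with direction $E_0$, shows that $d$ is Pansu differentiable at $u$. As $E\in V_1\setminus\{0\}$ was arbitrary, $d$ is differentiable in horizontal directions, and the equivalence follows.

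\textbf{Main obstacle.} The genuine mathematical content is hidden inside Theorem \ref{maximalimpliesdiff} (the Carnot-group analogue of the classical fact that a unit-speed maximal directional derivative forces differentiability), which I am allowed to assume here. Within the present proposition the only points that need care are: (a) the identity $\mathrm{Lip}_\bbG(d)=1$, for which the upper bound is the triangle inequality while the lower bound uses that lines through the origin in horizontal directions are length-minimizing; and (b) handling the normalizations so that the conclusion reaches every $u=\exp E$ with $E\in V_1\setminus\{0\}$, and not merely those with $\omega(E)=1$. Given Proposition \ref{Ed=1}, both are routine.
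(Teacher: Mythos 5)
Your proposal is correct and follows essentially the same route as the paper: Theorem \ref{maximalimpliesdiff} gives the forward implication, and Proposition \ref{Ed=1} applied to $f=d$ gives the converse. Your extra care with the normalization $E_0=E/\omega(E)$ (so that the conclusion covers all $u=\exp E$ with $E\in V_1\setminus\{0\}$, not just unit $E$) is a detail the paper's two-line proof leaves implicit, but it is handled correctly.
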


In \cite{PS16} it was shown that Heisenberg groups enjoy the properties stated in Proposition \ref{equivalence}. We generalize this positive result to all step 2 Carnot groups. We do this first in free Carnot groups of step 2 by explicit construction of horizontal curves and estimates of the CC distance (Lemma \ref{curve} and Theorem \ref{thmdifferentiabilityofdistance}). We show later in Section \ref{morecarnotgroups} that differentiability of the CC distance is inherited under suitable images of Carnot groups, allowing us to pass to arbitrary step 2 Carnot groups (Proposition \ref{quotientdiffCC}). 

In Section \ref{engelsection} we show that differentiability of the CC distance fails in the Engel group, a step 3 Carnot group (Theorem \ref{Conterex:Engel}). Combining results gives the following.

\begin{theorem}\label{whatgroupsdiff}
The CC distance is differentiable in horizontal directions (or, equivalently, maximality implies differentiability) in any step 2 Carnot group, but not in the Engel group (a step 3 Carnot group).
\end{theorem}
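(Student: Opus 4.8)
The plan is to observe that this theorem is essentially an assembly of results stated (and presumably proved) elsewhere in the paper, together with the equivalence recorded in Proposition \ref{equivalence}. First I would record that the two phrasings of the positive statement---``the CC distance is differentiable in horizontal directions'' and ``maximality implies differentiability''---are interchangeable: this is exactly the content of Proposition \ref{equivalence}, which in turn is obtained by combining Theorem \ref{maximalimpliesdiff} (maximal directional derivative at $x$ forces Pansu differentiability at $x$, conditional on differentiability of $d$ in horizontal directions) with Proposition \ref{Ed=1} (the converse direction, relating $Ed=1$ at the relevant points to the maximality hypothesis). So it suffices to prove the two geometric facts: that $d$ is differentiable in horizontal directions in every step~$2$ Carnot group, and that it is not in the Engel group.

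For the positive part, the route is in two stages. Stage one handles the \emph{free} Carnot group of step $2$ (on any number of generators): here one uses the explicit construction of competitor horizontal curves joining $0$ to points near $\exp E$ for $E \in V_1 \setminus \{0\}$, together with sharp two-sided estimates on the CC distance, to verify Pansu differentiability of $d$ at such points directly. This is Lemma \ref{curve} and Theorem \ref{thmdifferentiabilityofdistance}. Stage two passes from free step~$2$ groups to arbitrary step~$2$ groups: every step~$2$ Carnot group is a quotient (equivalently, a suitable Lie-group image) of a free step~$2$ Carnot group of the same rank, and differentiability of the CC distance in horizontal directions is inherited under such images---this is Proposition \ref{quotientdiffCC} in Section \ref{morecarnotgroups}. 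Concatenating the two stages gives the positive half of the theorem for all step~$2$ groups.

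For the negative part, I would invoke Theorem \ref{Conterex:Engel} from Section \ref{engelsection}, which exhibits, in the Engel group, a horizontal direction $E$ at which $d$ fails to be Pansu differentiable at $\exp E$; by Definition \ref{defdifferentiabilityofdistance} this means $d$ is not differentiable in horizontal directions there, and by the equivalence in Proposition \ref{equivalence} there is correspondingly a Lipschitz $f$ with a maximal directional derivative at a point of non-differentiability. Since the Engel group has step~$3$, this simultaneously shows the step~$2$ hypothesis in the positive half cannot be dropped. The main obstacle, and the genuinely new work, is entirely inside the ingredients already cited---specifically the curve construction and CC-distance asymptotics underlying Theorem \ref{thmdifferentiabilityofdistance}, and the delicate explicit analysis in the Engel group behind Theorem \ref{Conterex:Engel}; the theorem as stated is then a short bookkeeping argument combining these with Proposition \ref{equivalence}.
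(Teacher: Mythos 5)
Your proposal matches the paper's own proof: the positive half is obtained by combining Theorem \ref{thmdifferentiabilityofdistance} for free step 2 groups with Proposition \ref{quotientdiffCC} (taking $\bbG$ the free step 2 group whose horizontal layer has the same dimension as that of the given group $\bbH$), the negative half is Theorem \ref{Conterex:Engel}, and the equivalence of the two phrasings is Proposition \ref{equivalence}. This is exactly the assembly the paper performs, so the proposal is correct and takes essentially the same approach.
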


In Section \ref{sectionUDS} we turn to construction of the UDS. Since known techniques rely on construction of `almost maximal' directional derivatives, Theorem \ref{whatgroupsdiff} forces us to restrict ourselves to step 2 Carnot groups. We show that in such groups one can carry out the procedure first used in the linear setting by Preiss \cite{Pre90} and adapted to the Heisenberg group in \cite{PS16}. We work initially in free Carnot groups of step 2. Since the coordinate representation of such groups is a clear generalization of that of Heisenberg groups, we are able to avoid repeating much of the work from \cite{PS16}. We prove again only those results where the geometry of the individual Carnot group is important (Lemma \ref{closedirectioncloseposition} for estimating distances and Lemma \ref{curveforalmostmax} for construction of curves). The UDS $N$ is then a suitably chosen $G_{\delta}$ set containing images of curves from Lemma \ref{curveforalmostmax} with rational parameters (Lemma \ref{uds}). By a simple argument (as found in \cite{DM12}), $N$ can be made not only measure zero but also of CC-Hausdorff dimension one. Existence of `almost maximal' directional derivatives at points of $N$ (Proposition \ref{DoreMaleva}) and their sufficiency for differentiability (Proposition \ref{almostmaximalityimpliesdifferentiability}) then follow exactly as in \cite{PS16}. This gives Theorem \ref{maintheorem} for free Carnot groups of step 2.

Finally, in Section \ref{morecarnotgroups} we see how to pass from free Carnot groups of step 2 to general step 2 Carnot groups. Suppose $F\colon \bbG \to \bbH$ is a Lie group homomorphism between Carnot groups preserving the first layer of the Lie algebras. We show that if the CC distance in $\bbG$ is differentiable then the CC distance in $\bbH$ is differentiable (Proposition \ref{quotientdiffCC}). Further, if $N\subset \bbG$ is a CC-Hausdorff dimension one UDS in $\bbG$, then $F(N)\subset \bbH$ is a CC-Hausdorff dimension one UDS in $\bbH$ (Proposition \ref{quotientUDS}). Since any step 2 Carnot group is the image of a free Carnot group of step 2 under such a map, this proves Theorem \ref{maintheorem} for general step 2 Carnot groups.

We now mention several directions in which this work could be extended.

Firstly, one could try to generalize the techniques of \cite{DM11, DM12, DM14} to construct a compact Hausdorff/Minkowski dimension one UDS in step 2 Carnot groups. Since the present techniques are already complicated we decided not to do so, focusing instead on how the geometry of Carnot groups comes into play. Note that there is a limit to how small a UDS can be, e.g. a UDS cannot be $\sigma$-porous \cite{PS16structure}.

In light of Theorem \ref{whatgroupsdiff}, a natural question is whether the CC distance can be differentiable in horizontal directions only in step 2 groups, or there exist higher step groups enjoying this property. At present we do not know the answer. The reason is that not all 3-step Carnot groups have the Engel group as a quotient \cite{Marchi14}.

Finally, one may ask whether any higher step Carnot groups may admit measure zero UDS. If the CC distance is differentiable in some higher step groups then it may be possible to extend existing techniques to those. However, for the Engel group one would need different methods entirely.

\medskip

\noindent \textbf{Acknowledgement.} E.L.D.~is supported by the Academy of Finland grant 288501 and by the ERC Starting Grant 713998 GeoMeG. A.P. acknowledges the support of the Istituto Nazionale di Alta  Matematica F. Severi. G.S. received support from the Charles Phelps Taft Research Center.

\section{Preliminaries}\label{preliminaries}
In this section we recall relevant information for general Carnot groups.

\subsection{General Carnot groups}
We recall that a Lie algebra is a vector space $V$ equipped with a Lie bracket $[\cdot,\cdot] \colon V\times V\to V$ that is bilinear and satisfies $[x,x]=0$ and the Jacobi identity $[x,[y,z]]+[z,[x,y]]+[y,[z,x]]=0$.

\begin{definition}\label{Carnot}
A \emph{Carnot group} $\bbG$ of \emph{step} $s$ is a simply connected Lie group whose Lie algebra $\mathfrak{g}$ admits a decomposition as a direct sum of subspaces of the form
\[\mathfrak{g}=V_{1}\oplus V_{2}\oplus \cdots \oplus V_{s}\]
such that $V_{i}=[V_{1},V_{i-1}]$ for any $i=2, \ldots, s$, and $[V_{1},V_{s}]=0$. The subspace $V_{1}$ is called the \emph{horizontal layer} and its elements are called \emph{horizontal left invariant vector fields}. The \emph{rank} of $\bbG$ is $\dim V_{1}$.
\end{definition}

The exponential mapping $\exp\colon \mathfrak{g}\to \bbG$ is a diffeomorphism. Given a basis $X_{1},\ldots, X_{n}$ of $\mathfrak{g}$ adapted to the stratification, any $x\in \bbG$ can be written in a unique way as
\[x=\exp(x_{1}X_{1}+\ldots +x_{n}X_{n}).\]
We identify $x$ with $(x_{1},\ldots, x_{n})\in \bbR^{n}$ and $\bbG$ with $(\bbR^{n},\cdot)$, where the group operation on $\bbR^{n}$ is determined by the Baker-Campbell-Hausdorff formula on $\mathfrak{g}$. This is known as \emph{exponential coordinates of the first kind}.

A curve $\gamma \colon [a,b]\to \bbG$ is absolutely continuous if it is absolutely continuous as a curve into $\bbR^{n}$.

\begin{definition}\label{horizontalcurve}
Fix a basis $X_{1}, \ldots, X_{r}$ of $V_{1}$, which are seen as left invariant vector fields. An absolutely continuous curve $\gamma\colon [a,b]\to \bbG$ is \emph{horizontal} if there exist $u_{1}, \ldots, u_{r}\in L^{1}[a,b]$ such that
\[\gamma'(t)=\sum_{j=1}^{r}u_{j}X_{j}(\gamma(t))\]
for almost every $t\in [a,b]$. The \emph{length} of such a curve is
\[L_{\bbG}(\gamma)=\int_{a}^{b}|u|.\]
\end{definition}

Since $\bbG$ is identified with $\bbR^{n}$ as a manifold, all its tangent spaces can be naturally identified with $\bbR^{n}$. We say that a vector $v\in \bbR^n$ is \emph{horizontal} at $p\in \bbG$ if $v=E(p)$ for some $E\in V_1$. Thus a curve $\gamma$ is horizontal if and only if $\gamma'(t)$ is horizontal at $\gamma(t)$ for almost every $t$. Chow's theorem \cite[Theorem 19.1.3]{BLU07} asserts that any two points in a Carnot group can be connected by a horizontal curve. Hence the following definition is well-posed.

\begin{definition}\label{carnotdistance}
For every $x,y\in \bbG$, their \emph{Carnot-Carath\'{e}odory (CC) distance} is defined by
\[d(x,y)=\inf \{L_{\bbG}(\gamma)\colon \gamma \mbox{ is a horizontal curve joining } x \mbox{ to }y\}.\]
We also use the notation $d(x)=d(x,0)$ for $x\in \bbG$.
\end{definition}

\begin{remark}\label{horizcurve}
In every Carnot group (actually in every subRiemannian group), the CC
distance can be equivalently defined by taking the infimum of the length of
piecewise linear horizontal curves.
Namely, if $\hat d(x,y)$ is the  infimum of lengths of Lipschitz
horizontal curves joining $x$ to $y$ which are the concatenation of
finitely many straight lines, then we claim that $\hat d = d$.
Indeed, $\hat d $ is defined via a length structure (according the
terminology in \cite{Burago-Burago-Ivanov}), hence it is a length
distance. Notice that $\hat d $ is finite and induces the manifold topology because the
proof of Chow's theorem on Lie groups gives exactly a piecewise linear
horizontal curve joining each pair of points, see \cite{Gro96, LeD}. Moreover, $\hat d $ is left-invariant by construction.
By a theorem of Berestowski \cite{Ber}, we conclude that $\hat d $ is a
subFinsler metric. Now, since both $d $ and $\hat d $ admit the same dilations, they have
the same horizontal bundle; since they give the same length to
horizontal lines (which in both cases are geodesics), the two
distances have the same norm for horizontal vectors. We conclude that
$d $ and $\hat d $ coincide.
\end{remark}

Left group translations preserve lengths of horizontal curves. This implies that $d(gx,gy)=d(x,y)$ for every $g,x,y \in \bbG$.

\begin{definition}\label{dilations}
\emph{Dilations} $\delta_{\lambda}\colon \bbG \to \bbG$, $\lambda>0$, are defined on $\bbG$ in coordinates by
\[\delta_{\lambda}(x_{1}, \ldots, x_{n})=(\lambda^{\alpha_{1}}x_{1},\ldots, \lambda^{\alpha_{n}}x_{n})\]
where $\alpha_{i}\in \bbN$ is the homogeneity of the variable $x_{i}$. For our purposes, it will be enough to know that $\alpha_{1}=\cdots=\alpha_{r}=1$,  where $r=\dim V_{1}$.
\end{definition}

Dilations are group homomorphisms of $\bbG$ and satisfy $d(\delta_{\lambda}(x),\delta_{\lambda}(y))=\lambda d(x,y)$ for every $x, y \in \bbG$ and $\lambda>0$. We will also use the fact that $\delta_{\lambda}(\exp E)=\exp (\lambda E)$ for $\lambda >0$ and $E\in V_{1}$.

Lebesgue measure $\mathcal{L}^{n}$ is a Haar measure on $\bbG$. For $p\in\mathbb{G}$ define the \emph{left translation} $l_p\colon\mathbb{G}\to\bbG$ by $l_p(x)=px$. Then
\[\mathcal{L}^{n}(l_{p}(A))=\mathcal{L}^{n}(A) \qquad \mbox{and}\qquad \mathcal{L}^{n}(\delta_{\lambda}(A))=\lambda^{Q}\mathcal{L}^{n}(A)\]
for every $p\in \bbG$, $r>0$ and $A\subset \bbG$ measurable \cite[page 44]{BLU07}. Here $Q=\sum_{i=1}^{s}i\dim(V_{i})$ is the \emph{homogeneous dimension} of $\bbG$.

\begin{definition}\label{defdirectionalderivative}
Let $f\colon \bbG \to \bbR$ be a Lipschitz function, $x\in \bbG$ and $E\in V_{1}$. The \emph{directional derivative of $f$ at $x$ in direction $E$} is defined by
\[Ef(x)=\lim_{t\to 0} \frac{f(x\exp(tE))-f(x)}{t}\]
whenever the limit exists.
\end{definition}

The following definition is a special case of differentiability between Carnot groups, as proposed by Pansu in \cite{Pan89}.

\begin{definition}\label{pansudifferentiability}
A function $L\colon \bbG \to \bbR$ is \emph{$\bbG$-linear} if $L(xy)=L(x)+L(y)$ and $L(\delta_{r}(x))=rL(x)$ for all $x, y\in \bbG$ and $r>0$.

Let $f\colon \bbG\to \bbR$ and $x\in \bbG$. We say that $f$ is \emph{Pansu differentiable at $x$} if there is a $\bbG$-linear map $L \colon \bbG\to \bbR$ such that:
\[\lim_{y \to x} \frac{|f(y)-f(x)-L(x^{-1}y)|}{d(x,y)}=0.\]
In this case we say that $L$ is the \emph{Pansu differential} of $f$.
\end{definition}

\begin{remark}
It is readily seen that a Lipschitz function $f\colon \bbG\to \bbR$ is Pansu differentiable at $x\in \bbG$ if and only if for all $\xi\in\bbG$ the limit
\[
D f(x,\xi):=\lim_{t\to 0}\frac{f(x\delta_t\xi)-f(x)}{t}
\]
exists and defines a $\bbG$-linear map $\xi \mapsto D f(x,\xi)$, where the convergence in the limit is uniform with respect to $\xi$ whenever $\xi$ is restricted to a compact set. Moreover, $Df(x,\xi)=L(\xi)$ for all $\xi\in\bbG$.
\end{remark}

The following important result is proved in \cite{Pan89}.

\begin{theorem}[Pansu]\label{pansutheorem}
Every Lipschitz function $f\colon \bbG \to \bbR$ is Pansu differentiable Lebesgue almost everywhere.
\end{theorem}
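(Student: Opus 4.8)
The plan is to run the classical proof of Rademacher's theorem, adapted to the group structure as in \cite{Pan89}. Since $f$ is real-valued, a $\bbG$-linear map $L\colon\bbG\to\bbR$ is nothing but the composition of $\log$, the projection $\pi_{V_1}$ onto the horizontal layer $V_1$, and a linear functional on $V_1$: the identity $L(xy)=L(x)+L(y)$ is automatic because all iterated brackets lie in $V_2\oplus\cdots\oplus V_s$, and the homogeneity condition then fixes $L$ on all of $\bbG$. Hence the only possible Pansu differential of $f$ at a point $x$ is the $\bbG$-linear functional $L_x$ determined by $L_x(\exp E)=Ef(x)$ for $E\in V_1$, and the theorem splits into two assertions: \textbf{(a)} for a.e.\ $x$ the directional derivative $Ef(x)$ exists for every $E\in V_1$ and depends linearly on $E$; \textbf{(b)} for a.e.\ such $x$ one has $f(x\delta_t\xi)-f(x)=tL_x(\xi)+o(t)$ as $t\to0$, uniformly for $\xi$ in compact subsets of $\bbG$ — which, for Lipschitz $f$, is equivalent to Pansu differentiability at $x$.

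For \textbf{(a)}, note that $\bbG$, being nilpotent, is unimodular, so left and right translations preserve $\mathcal{L}^n$. Fix first a single $E\in V_1$. The flow $\Phi_t(x)=x\exp(tE)$ is a one-parameter group of measure-preserving diffeomorphisms, and for every $x$ the Lipschitz function $t\mapsto f(\Phi_t(x))$ is differentiable for a.e.\ $t$; a Fubini argument using the measure-preservation of $\Phi_t$ then shows $Ef(x)$ exists for a.e.\ $x$. Applying this to a basis $X_1,\dots,X_r$ of $V_1$ and to the countably many rational linear combinations of the $X_i$, and using that, by the Baker--Campbell--Hausdorff formula and the scaling property of $d$, the difference quotients $t^{-1}(f(x\exp(tE))-f(x))$ are equicontinuous in $E$ on bounded subsets of $V_1$, one obtains a full-measure set on which $Ef(x)$ exists for \emph{every} $E\in V_1$. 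Linearity of $E\mapsto Ef(x)$ there follows as in the Euclidean case by testing against $\varphi\in C_c^\infty(\bbG)$: unimodularity gives $\int_\bbG(Xg)\varphi=-\int_\bbG g\,(X\varphi)$ for every left-invariant $X$, so the a.e.-defined classical derivatives $Xf$ agree with the distributional ones, the vector field identities $(X+Y)\varphi=X\varphi+Y\varphi$ and $(\lambda X)\varphi=\lambda X\varphi$ descend to the a.e.\ relations $(X+Y)f=Xf+Yf$ and $(\lambda X)f=\lambda Xf$, and equicontinuity in the direction promotes linearity from the rational combinations to all of $V_1$. This produces a full-measure set $\mathcal{A}$ and, for $x\in\mathcal{A}$, the candidate functional $L_x$.

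For \textbf{(b)}, fix $x\in\mathcal{A}$ which is in addition a Lebesgue point of each $X_if$. The functions $g_t(\xi):=t^{-1}(f(x\delta_t\xi)-f(x))$ are equi-Lipschitz with respect to $d$ with constant $\mathrm{Lip}(f)$, since $d(x\delta_t\xi,x\delta_t\eta)=d(\delta_t\xi,\delta_t\eta)=t\,d(\xi,\eta)$, and satisfy $|g_t(\xi)|\le\mathrm{Lip}(f)\,d(\xi)$; the same bound makes $L_x$ $d$-Lipschitz. By Arzel\`a--Ascoli it therefore suffices to show $g_t(\xi)\to L_x(\xi)$ for $\xi$ in a dense subset of $\bbG$. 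Writing such a $\xi$, via Chow's theorem, as a finite product $\exp(E_1)\cdots\exp(E_m)$ of horizontal exponentials and using that $\delta_t$ is a homomorphism, one telescopes
\[
f(x\delta_t\xi)-f(x)=\sum_{j=1}^{m}\Big(f\big(y_j(t)\exp(tE_j)\big)-f\big(y_j(t)\big)\Big),\qquad y_j(t)=x\exp(tE_1)\cdots\exp(tE_{j-1}),
\]
where $d(y_j(t),x)=O(t)$, while $\bbG$-linearity of $L_x$ gives $tL_x(\xi)=\sum_{j=1}^m L_x(\exp(tE_j))$. The crux, and the point where the argument genuinely departs from the Euclidean one, is to show that each summand satisfies $t^{-1}\big(f(y_j(t)\exp(tE_j))-f(y_j(t))\big)\to E_jf(x)$ \emph{despite the base point $y_j(t)$ only converging to $x$ rather than equalling it}: in $\bbR^n$ non-horizontal directions do not arise and $g_t(\xi)$ is literally a one-dimensional difference quotient, whereas here one must reach $x\delta_t\xi$ along a chain of short horizontal segments with drifting base points. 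Controlling this error is the main obstacle; following \cite{Pan89} it is handled by upgrading the a.e.\ existence of $E_jf$ to quantitative control on shrinking neighbourhoods of $x$ — via a Hardy--Littlewood-type maximal function estimate for $E_jf-E_jf(x)$, the doubling property of the CC distance, and a Vitali-type covering of the relevant horizontal segments — the choice of $x$ as a Lebesgue point making the associated averages negligible. Combining \textbf{(a)} and \textbf{(b)} yields Pansu differentiability of $f$ at almost every $x$.
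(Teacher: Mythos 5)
The paper does not prove this theorem: it is quoted from \cite{Pan89} (see also \cite{MPS17}), so there is no internal proof to compare against and your attempt has to be judged against the standard argument in the literature. Your part \textbf{(a)} is essentially the standard and correct route: unimodularity of the nilpotent group makes right translation measure-preserving, Fubini along the flow $t\mapsto x\exp(tE)$ gives a.e.\ existence of $Ef(x)$ for each fixed $E$, countably many rational directions plus the BCH/equicontinuity estimate extend this to all of $V_1$, and integration by parts against test functions gives linearity of $E\mapsto Ef(x)$. The reduction of $\bbG$-linear functionals to linear functionals on $V_1$ is also correct (cf.\ \cite[Proposition 3.10]{Cas14}, which the paper itself uses in Remark \ref{derivativeformula}).

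The genuine gap is in part \textbf{(b)}, at exactly the step you yourself label ``the crux.'' You need $t^{-1}\bigl(f(y_j(t)\exp(tE_j))-f(y_j(t))\bigr)\to E_jf(x)$ for a \emph{single} drifting base point $y_j(t)$ with $d(y_j(t),x)=O(t)$, and you assert this follows because $x$ is a Lebesgue point of each $X_if$ ``via a Hardy--Littlewood-type maximal function estimate'' and ``a Vitali-type covering.'' But writing $f(y\exp(tE))-f(y)=\int_0^t Ef(y\exp(sE))\,ds$, the quantity to be controlled is an average of $Ef-E f(x)$ over a one-dimensional horizontal segment, which has Lebesgue measure zero; the Lebesgue-point property of $Ef$ controls averages over balls and says nothing directly about such a segment through an arbitrary nearby point. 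Closing this requires an additional mechanism that you neither state nor prove: the standard one introduces $g_t(u):=\sup_{0<s\le t}\bigl|s^{-1}(f(u\exp(sE))-f(u))-Ef(u)\bigr|$, notes $g_t\to 0$ a.e.\ and boundedly (hence in $L^1_{\mathrm{loc}}$), applies Egorov to get uniform convergence off a set of small measure, and then at a density point $x$ of the good set replaces $y_j(t)$ by a good point $y'$ within $\varepsilon t$ --- at which stage one must also estimate $d\bigl(y\exp(sE),\,y'\exp(sE)\bigr)$, i.e.\ the distortion of distances under conjugation by $\exp(sE)$, since right translations are \emph{not} isometries of the CC distance. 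None of this is set up in your sketch, and without it the convergence at the drifting base points --- which is the entire content of Pansu's theorem beyond Rademacher --- is not established. The remaining scaffolding of \textbf{(b)} (equi-Lipschitz family $g_t$, Arzel\`a--Ascoli, density of products of horizontal exponentials via Chow, telescoping) is fine.
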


Note that Theorem \ref{pansutheorem} also holds for Carnot group targets (and even for suitable infinite dimensional targets \cite{MR, MPS17}), but we will be concerned mainly with real-valued maps.

\begin{definition}
A set $N\subset \bbG$ is called a \emph{universal differentiability set (UDS)} if every Lipschitz map $f\colon \bbG \to \bbR$ is Pansu differentiable at a point of $N$.
\end{definition}

\begin{remark}\label{Hausone}
Let $N\subset\bbG$ be a UDS in a Carnot group. We claim the CC-Hausdorff dimension of $N$ is at least one. Assume it is strictly less than one and let $\pi_1:\bbG\to \bbR$ be the projection onto the first coordinate. Since $\pi_{1}$ is Lipschitz and the CC-Hausdorff dimension of $N$ is strictly less than one, $\pi_1(N)\subset \bbR$ has Lebesgue measure zero. Let $g:\bbR\to \bbR$ be a Lipschitz function (with respect to Euclidean distance) which is not differentiable at any point of $\pi_1(N)$. We claim the Lipschitz function $f:=g\circ \pi_1:\bbG\to \bbR$ is not differentiable at any $x\in N$, as the derivative $X_1f(y)$ does not exist for $y\in N$. Indeed, since $\pi_1(\exp tX_{1})=t$,
\begin{align*}
X_1f(y)&=\lim_{t\to 0}\frac{f(y\exp tX_{1})-f(y)}{t}\\
&=\lim_{t\to 0}\frac{g(\pi_1(y)+t)-g(\pi_1(y))}{t},
\end{align*}  
which does not exist by definition of $g$.
\end{remark}

Pansu's theorem implies that every positive measure subset of $\bbG$ is a UDS \cite{Mag01}. One of the themes of this paper is construction of much smaller UDS.

We now gather miscellaneous facts about lengths and distances in general Carnot groups which will be useful later in the paper.

In the first $r$ coordinates, the group operation is Euclidean and the dilation $\delta_{\lambda}$ acts by multiplication by $\lambda$. Equivalently $p(xy)=p(x)+p(y)$ and $p(\delta_{\lambda}(x))=\lambda p(x)$, where $p\colon \bbG\to \bbR^{r}$ is the projection defined by $p(x)=(x_{1},\ldots, x_{r})$.

The elements of $\mathfrak{g}$ are represented in coordinates as vector fields on $\bbR^{n}$. It can be shown that if $1\leq j\leq r$ then
\[X_{j}(x)=e_{j}+\sum_{i>r}^{n}q_{i,j}(x)e_{i},\]
where $q_{i,j}$ are polynomials satisfying various properties. In particular, $q_{i,j}(0)=0$. Using the above equation and the definition of exponential coordinates, it follows that $\exp (E)=E(0)$ for $E\in V_{1}$. Thus points $u=\exp (E)$ for some $E\in V_{1}$ are exactly those of the form $u=(u_{h},0)$ for some $u_{h}\in \bbR^{r}$.

It follows that if $E\in V_{1}$ then $p(E(x))$ is independent of $x\in \bbG$, so one can unambiguously define $p(E)\in \bbR^{r}$ for $E\in V_{1}$. In particular, $p(X_{j})=e_{j}$ for each $1\leq j\leq r$. Fixing an inner product norm $\omega$ on $V_{1}$ with respect to which $X_{1}, \ldots, X_{r}$ is an orthonormal basis, it follows that $\omega$ is equivalently given by $\omega(E)=|p(E)|$. 

From Definition \ref{horizontalcurve} we see that $L_{\bbG}(\gamma)$ is computed by integrating $|(p \circ \gamma)'(t)|$. In other words, $L_{\bbG}(\gamma)=L_{\bbE}(p \circ \gamma)$, where $L_{\bbE}$ is the Euclidean length of a curve in $\bbR^{r}$. This implies that $d(x,y)\geq |p(y)-p(x)|$, since the projection of a horizontal curve joining $x$ to $y$ is a curve in $\bbR^{r}$ joining $p(x)$ to $p(y)$. 

The following proposition will also be useful for estimating the CC distance \cite[Corollary 5.2.10 and Proposition 5.15.1]{BLU07}:

\begin{proposition}\label{euclideanheisenberg}
Let $\bbG$ be a Carnot group of step $s$ and $K\subset \bbG$ be a compact set. Then there exists a constant $C_{\mathrm{H}} \geq 1$ depending on $K$ such that:
\[ C_{\mathrm{H}}^{-1} |x-y|\leq d(x,y)\leq C_{\mathrm{H}}|x-y|^{\frac{1}{s}} \qquad \mbox{for all }x, y\in K.\]
\end{proposition}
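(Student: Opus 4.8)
The plan is to prove the two inequalities separately, by rather different means; both are instances of the classical ball--box estimate and could also be extracted from \cite{BLU07}. The constant $C_{\mathrm H}$ will be taken at the very end as the maximum of $1$ and finitely many constants produced below. One genuinely external (but standard) input will be used: $(\bbG,d)$ is a \emph{proper} metric space. Indeed its distance induces the manifold topology, and $(\bbG,d)$ is a complete, locally compact length space, so by the Hopf--Rinow theorem closed $d$-bounded sets are compact.

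For the lower bound $|x-y|\le C_{\mathrm H}\,d(x,y)$ I would show that horizontal curves have controlled Euclidean speed on compact sets. Given $x,y\in K$ and $\varepsilon\in(0,1]$, fix a horizontal curve $\gamma\colon[0,1]\to\bbG$ from $x$ to $y$ with $L_{\bbG}(\gamma)\le d(x,y)+\varepsilon\le D+1$, where $D:=\sup_{K\times K}d<\infty$. Then $d(\gamma(t),x)\le L_{\bbG}(\gamma)\le D+1$ for all $t$, so $\gamma$ stays in the compact set $\tilde K:=\{z\in\bbG:\ d(z,K)\le D+1\}$, on which $M:=\max_{1\le j\le r}\sup_{\tilde K}|X_j|<\infty$ since each $X_j$ is a polynomial vector field on $\bbR^n$. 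Writing $\gamma'(t)=\sum_{j=1}^r u_j(t)X_j(\gamma(t))$ for a.e.\ $t$ and using Cauchy--Schwarz gives $|\gamma'(t)|\le M\sqrt r\,|u(t)|$, hence
\[
|x-y|=\Bigl|\int_0^1\gamma'(t)\,dt\Bigr|\le M\sqrt r\int_0^1|u(t)|\,dt=M\sqrt r\,L_{\bbG}(\gamma)\le M\sqrt r\,(d(x,y)+\varepsilon),
\]
and letting $\varepsilon\to0$ yields the claim with constant $M\sqrt r$.

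For the upper bound $d(x,y)\le C_{\mathrm H}|x-y|^{1/s}$ I would first reduce to the identity: by left-invariance $d(x,y)=d(x^{-1}y)$, and since the group law on $\bbR^n$ is polynomial (Baker--Campbell--Hausdorff) with $x^{-1}y=0$ when $x=y$, there is $C_K\ge1$ with $|x^{-1}y|\le C_K|x-y|$ for $x,y\in K$, while $z:=x^{-1}y$ ranges over a fixed compact set $K'$. So it suffices to prove $d(z)\le C'|z|^{1/s}$ for $z\in K'$. For $z\ne0$ with $\lambda:=d(z)\le1$, write $z=\delta_{\lambda}(w)$ with $d(w)=1$; recalling $\delta_{\lambda}(w)=(\lambda^{\alpha_1}w_1,\dots,\lambda^{\alpha_n}w_n)$ with integers $1\le\alpha_i\le s$ and $\lambda\le1$, one has $\lambda^{\alpha_i}\ge\lambda^s$, hence
\[
|z|\ge\lambda^s|w|\ge c_0\,\lambda^s,\qquad c_0:=\inf\{|w|:\ d(w)=1\}>0,
\]
where $c_0>0$ because $\{d(w)=1\}$ is a compact subset of $\bbG$ not containing $0$. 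Rearranging, $d(z)=\lambda\le c_0^{-1/s}|z|^{1/s}$. For the remaining $z\in K'$ with $d(z)>1$, the closed subset $K'\cap\{d\ge1\}$ of $K'$ is compact and avoids $0$, so there $d(z)$ is bounded above while $|z|^{1/s}$ is bounded below, giving $d(z)\le C''|z|^{1/s}$; combining the cases and substituting $z=x^{-1}y$ finishes this direction.

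I expect the main obstacle to be the upper bound, and within it the dilation inequality $|\delta_{\lambda}(w)|\ge c_0\lambda^s$ for $\lambda\le1$: this is exactly where the exponent $1/s$ comes from, encoding that coordinates of homogeneity $s$ are the ``most expensive'' for horizontal curves to reach. The lower bound and the transfer between a general pair $x,y$ and the origin are routine once one observes that the relevant curves, and the element $x^{-1}y$, remain in fixed compact sets; the rest is bookkeeping with constants over $\tilde K$ and $K'$.
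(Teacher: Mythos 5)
Your proof is correct. Note, though, that the paper does not actually prove this proposition: it is imported wholesale from \cite{BLU07} (Corollary 5.2.10 and Proposition 5.15.1), so there is no internal argument to compare against. What you have written is a clean, self-contained derivation of the ball--box--type estimate. The lower bound via bounding the Euclidean speed of near-minimizing horizontal curves on the enlarged compact set $\tilde K$ is sound; the only external input is properness of $(\bbG,d)$, which you correctly flag and justify via Hopf--Rinow (it is also what makes $\{w: d(w)=1\}$ compact and hence $c_0>0$ in the second half). The upper bound correctly isolates the two standard ingredients: the Lipschitz estimate $|x^{-1}y|\leq C_K|x-y|$ on compacts (which follows from the group law being polynomial and vanishing of $x^{-1}y$ on the diagonal, e.g.\ by integrating the gradient along the segment inside the convex hull of $K$), and the dilation inequality $|\delta_\lambda(w)|\geq \lambda^s|w|$ for $\lambda\leq 1$, which is exactly where the exponent $1/s$ enters. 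One trivial omission: the case $x=y$ (i.e.\ $z=0$) should be dispensed with separately before writing $z=\delta_{d(z)}(w)$, but both sides vanish there. In short, your argument is a legitimate alternative to the citation and is essentially the standard proof one would find in \cite{BLU07}.
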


The following Lemmas can be proved exactly as in \cite[Lemma 2.8]{PS16}, \cite[Lemma 2.9]{PS16} and \cite[Lemma 5.2]{PS16} respectively.

\begin{lemma}\label{horizontaldistances}
If $E\in V_{1}$ then:
\begin{itemize}
\item $|E(0)|=\omega(E)=d(E(0))$,
\item $d(x,x\exp tE)=t\omega(E)$ for any $x\in \bbG$ and $t\in \bbR$.
\end{itemize}
\end{lemma}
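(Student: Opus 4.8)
The plan is to reduce both assertions to facts already recorded in Section~\ref{preliminaries}: the coordinate identity $\exp E = E(0)$ and the resulting description of such points as $(p(E),0)$, the formula $\omega(E)=|p(E)|$, the lower bound $d(x,y)\ge|p(y)-p(x)|$, and left-invariance of the CC distance.

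For the first bullet, write $E=\sum_{j=1}^{r}a_{j}X_{j}$. Since $X_{j}(0)=e_{j}$, we get $E(0)=(a_{1},\dots,a_{r},0,\dots,0)$, hence $|E(0)|=|(a_{1},\dots,a_{r})|=|p(E)|=\omega(E)$; this is the first equality. For $d(E(0))=\omega(E)$ I would prove matching inequalities. The lower bound is immediate: $d(E(0))=d(0,E(0))\ge|p(E(0))-p(0)|=|p(E)|=\omega(E)$. For the upper bound I would use the horizontal curve $\gamma\colon[0,1]\to\bbG$, $\gamma(t)=\exp(tE)$, namely the one-parameter subgroup generated by $E$. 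Being the integral curve through $0$ of the left-invariant vector field $E\in V_{1}$, it is horizontal with constant control $u\equiv(a_{1},\dots,a_{r})$ (in coordinates it is simply the segment $t\mapsto tE(0)$), so $L_{\bbG}(\gamma)=\int_{0}^{1}|u|=|(a_{1},\dots,a_{r})|=\omega(E)$. Therefore $d(E(0))\le\omega(E)$, and the two bounds give $d(E(0))=\omega(E)$.

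The second bullet then costs almost nothing. For $t\in\bbR$ the vector $tE$ lies in $V_{1}$, so the first bullet applied to $tE$ gives $d\big((tE)(0)\big)=\omega(tE)=|t|\,\omega(E)$. By left-invariance of $d$ and $\exp(tE)=(tE)(0)$,
\[
d(x,x\exp tE)=d(0,\exp tE)=d\big((tE)(0)\big)=|t|\,\omega(E),
\]
which is $t\,\omega(E)$ for $t\ge0$.

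I do not expect a genuine obstacle: the statement is essentially bookkeeping with the coordinate description of $V_{1}$. The only points needing a little care are (i) checking that $t\mapsto\exp(tE)$ is horizontal in the precise sense of Definition~\ref{horizontalcurve} with the claimed control, which is where one uses that one-parameter subgroups are integral curves of left-invariant vector fields together with $p(X_{j})=e_{j}$, and (ii) keeping the absolute value $\omega(tE)=|t|\,\omega(E)$ visible when $t<0$. One could instead quote Remark~\ref{horizcurve}, where horizontal lines are noted to be geodesics, but the explicit length computation above is shorter and self-contained.
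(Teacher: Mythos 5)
Your proof is correct and follows essentially the same route as the paper, which omits the argument and defers to \cite[Lemma 2.8]{PS16}: there too one combines the coordinate identity $\exp E=E(0)$, the projection lower bound $d(x,y)\ge|p(y)-p(x)|$, and the horizontal line $t\mapsto\exp(tE)$ as the length-minimizing competitor, then gets the second bullet from left-invariance and homogeneity. Your remark that the answer is really $|t|\,\omega(E)$ for $t<0$ is a fair reading of the statement and does not affect anything downstream.
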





\begin{lemma}\label{lipschitzhorizontal}
Suppose $\gamma \colon I \to \bbG$ is a horizontal curve. Then:
\[\mathrm{Lip}_{\bbG}(\gamma) =  \mathrm{Lip}_{\bbE}(p \circ \gamma).\]
\end{lemma}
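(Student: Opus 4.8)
The plan is to prove the two inequalities separately, relying on two facts recorded in the preliminaries: the length identity $L_{\bbG}(\gamma)=L_{\bbE}(p\circ\gamma)$ for horizontal curves, and the pointwise bound $d(x,y)\ge |p(y)-p(x)|$.

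For the inequality $\mathrm{Lip}_{\bbE}(p\circ\gamma)\le \mathrm{Lip}_{\bbG}(\gamma)$, I would argue directly: for any $s,t\in I$ the projection bound gives
\[
|p(\gamma(t))-p(\gamma(s))|\le d(\gamma(s),\gamma(t))\le \mathrm{Lip}_{\bbG}(\gamma)\,|t-s|,
\]
and taking the supremum over $s\ne t$ yields the claim. (If $\mathrm{Lip}_{\bbG}(\gamma)=+\infty$ there is nothing to prove.)

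For the reverse inequality, one may assume $L:=\mathrm{Lip}_{\bbE}(p\circ\gamma)<\infty$, since otherwise the estimate is trivial. Fix $s<t$ in $I$. The restriction $\gamma|_{[s,t]}$ is again a horizontal curve, so $L_{\bbG}(\gamma|_{[s,t]})=L_{\bbE}((p\circ\gamma)|_{[s,t]})$. Because $p\circ\gamma$ is $L$-Lipschitz into $\bbR^{r}$, its derivative satisfies $|(p\circ\gamma)'|\le L$ almost everywhere, hence
\[
L_{\bbE}((p\circ\gamma)|_{[s,t]})=\int_{s}^{t}|(p\circ\gamma)'|\le L\,|t-s|.
\]
Since $\gamma|_{[s,t]}$ is a horizontal curve joining $\gamma(s)$ to $\gamma(t)$, the definition of the CC distance gives $d(\gamma(s),\gamma(t))\le L_{\bbG}(\gamma|_{[s,t]})\le L\,|t-s|$. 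Taking the supremum over $s\ne t$ gives $\mathrm{Lip}_{\bbG}(\gamma)\le L$, completing the proof.

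There is essentially no hard step here: the length identity and the projection bound do all the work. The only points requiring a little care are the trivial handling of the possibly-infinite cases and the observation that a restriction of a horizontal curve to a subinterval is again horizontal, so that the length identity applies on $[s,t]$.
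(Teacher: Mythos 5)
Your proof is correct and follows essentially the same route as the argument the paper defers to (\cite[Lemma 2.9]{PS16}): the upper bound for $\mathrm{Lip}_{\bbE}(p\circ\gamma)$ comes from $d(x,y)\ge |p(y)-p(x)|$, and the upper bound for $\mathrm{Lip}_{\bbG}(\gamma)$ comes from comparing $d(\gamma(s),\gamma(t))$ with the length of the restricted horizontal curve via $L_{\bbG}=L_{\bbE}\circ p$. No gaps; the handling of the infinite cases and of restrictions to subintervals is exactly the care needed.
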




\begin{lemma}\label{lemmascalarlip}
Suppose $E\in V_{1}$ with $\omega(E)=1$ and let $L\colon \bbG \to \bbR$ be the function $L(x)=\langle x, E(0) \rangle$. Then:
\begin{enumerate}
\item $L$ is $\bbG$-linear and $\mathrm{Lip}_{\bbG}(L) = 1$,
\item for $x\in\bbG$ and $\tilde E\in V_{1}$:
\[\tilde{E}L(x)=L(\tilde{E}(0))=\langle p(\tilde{E}), p(E) \rangle.\]
\end{enumerate}
\end{lemma}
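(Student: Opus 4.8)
\textbf{Part (1).} The plan is to verify the two defining identities for $\bbG$-linearity directly. For additivity, I would use the fact established just before Remark \ref{horizcurve} that the group operation is Euclidean in the first $r$ coordinates, i.e. $p(xy)=p(x)+p(y)$ where $p(x)=(x_1,\dots,x_r)$. Since $E(0)=\exp(E)=(p(E),0)$ has all its nonzero entries among the first $r$ coordinates, the Euclidean inner product $\langle x,E(0)\rangle$ depends only on $p(x)$; hence $L(xy)=\langle p(xy),p(E)\rangle=\langle p(x)+p(y),p(E)\rangle=L(x)+L(y)$. For homogeneity, use $p(\delta_\lambda(x))=\lambda p(x)$ (stated in the same place) to get $L(\delta_\lambda x)=\lambda L(x)$. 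For the Lipschitz constant: on one hand $|L(x)-L(y)|=|\langle p(x)-p(y),p(E)\rangle|\le |p(x)-p(y)|\,\omega(E)=|p(x)-p(y)|\le d(x,y)$, using Cauchy–Schwarz, $\omega(E)=|p(E)|=1$, and the inequality $d(x,y)\ge |p(y)-p(x)|$ recorded in the preliminaries; so $\mathrm{Lip}_{\bbG}(L)\le 1$. On the other hand, testing $L$ along the horizontal line $t\mapsto \exp(tE)$ and using Lemma \ref{horizontaldistances}, $d(0,\exp tE)=t\omega(E)=t$ while $L(\exp tE)-L(0)=\langle (tp(E),0),(p(E),0)\rangle=t|p(E)|^2=t$, forcing $\mathrm{Lip}_{\bbG}(L)\ge 1$. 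Hence equality.

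\textbf{Part (2).} Here I would compute the directional derivative from the definition. For $\tilde E\in V_1$ and $x\in\bbG$,
\[
\tilde E L(x)=\lim_{t\to 0}\frac{L(x\exp t\tilde E)-L(x)}{t}.
\]
Since $\exp(t\tilde E)=(tp(\tilde E),0)$, the Euclidean-in-first-$r$-coordinates property gives $p(x\exp t\tilde E)=p(x)+tp(\tilde E)$, so $L(x\exp t\tilde E)=\langle p(x)+tp(\tilde E),p(E)\rangle=L(x)+t\langle p(\tilde E),p(E)\rangle$. The difference quotient is therefore identically $\langle p(\tilde E),p(E)\rangle$, independent of $t$ and of $x$, which yields $\tilde E L(x)=\langle p(\tilde E),p(E)\rangle$. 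Finally $L(\tilde E(0))=\langle \tilde E(0),E(0)\rangle=\langle (p(\tilde E),0),(p(E),0)\rangle=\langle p(\tilde E),p(E)\rangle$, giving the stated chain of equalities.

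\textbf{Main obstacle.} There is no real obstacle here — everything reduces to the identity $p(xy)=p(x)+p(y)$ and the already-established facts $\exp E=E(0)=(p(E),0)$, $\omega(E)=|p(E)|$, and $d\ge|p(\cdot)-p(\cdot)|$. The only point requiring a modicum of care is the lower bound $\mathrm{Lip}_{\bbG}(L)\ge 1$, which is why one tests against a horizontal line rather than against an arbitrary curve; and keeping straight that the inner product is the \emph{Euclidean} one on $\bbR^n$ while $\omega$ is a norm only on $V_1\cong\bbR^r$, the two being compatible precisely because $E(0)$ is supported in the first $r$ coordinates. This is exactly the computation carried out in \cite[Lemma 5.2]{PS16}, and it goes through verbatim in a general Carnot group.
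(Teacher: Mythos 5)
Your proposal is correct and is essentially the computation that the paper delegates to \cite[Lemma 5.2]{PS16}: everything reduces to $L(x)=\langle p(x),p(E)\rangle$ together with $p(xy)=p(x)+p(y)$, $p(\delta_\lambda x)=\lambda p(x)$, $\exp E=E(0)=(p(E),0)$, $d(x,y)\ge|p(y)-p(x)|$, and Lemma \ref{horizontaldistances} for the sharpness of the Lipschitz constant. No gaps; the only point worth flagging is the sign convention for $t<0$ in $d(x,x\exp tE)=|t|\,\omega(E)$, which your restriction to $t>0$ already handles.
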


\subsection{Free Carnot groups of step 2}

We first give the more abstract but flexible definition of a free Carnot group via free-nilpotent Lie algebras. 
We then give the representation in coordinates of free Carnot groups of step 2 which we will use for most of the article. Recall that a homomorphism between Lie algebras is simply a linear map that preserves the Lie bracket. If it is also bijective than the map is an isomorphism. Free-nilpotent Lie algebras are then defined as follows (see Definition 14.1.1 in \cite{BLU07}).


\begin{definition}\label{freeliealgebra}
Let $r\geq 2$ and $s\geq 1$ be integers. We say that $\mathcal{F}_{r,s}$ is the \emph{free-nilpotent Lie algebra} with $r$ \emph{generators} $x_1, \ldots, x_{r}$ of \emph{step} $s$ if:
\begin{enumerate}
\item $\mathcal{F}_{r,s}$ is a Lie algebra generated by elements $x_1, \ldots, x_r$,
\item $\mathcal{F}_{r,s}$ is nilpotent of step $s$ (i.e., nested Lie brackets of length $s+1$ are $0$),
\item for every Lie algebra $\mathfrak{g}$ that is nilpotent of step $s$ and for every map $\Phi\colon \{x_1, \ldots, x_r\}\to \mathfrak{g}$, there is a homomorphism of Lie algebras $\tilde{\Phi}\colon \mathcal{F}_{r,s} \to \mathfrak{g}$ that extends $\Phi$, and moreover it is unique.
\end{enumerate}
\end{definition}

We next define free Carnot groups (see Definition 14.1.3 in \cite{BLU07}).

\begin{definition}\label{freecarnotgroup}
A \emph{free Carnot group} is a Carnot group whose Lie algebra is isomorphic to a free-nilpotent Lie algebra $\mathcal{F}_{r, s}$ for some $r\geq 2$ and $s\geq 1$. In this case the horizontal layer of the free Carnot group is isomorphic to the span of the generators of $\mathcal{F}_{r,s}$.
\end{definition}

Note that if two Carnot groups have isomorphic Lie algebras, then the Carnot groups themselves are isomorphic. Hence there is essentially one Carnot group corresponding to each free-nilpotent Lie algebra $\mathcal{F}_{r, s}$. By saying that two Carnot groups are isomorphic we simply mean that they are isomorphic as Lie groups, with an isomorphism that preserves the stratification. Such an isomorphism is called a \emph{Carnot group isomorphism}. Since Carnot groups are simply connected Lie groups, any homomorphism $\phi$ between their Lie algebras lifts to a Lie group homomorphism $F$ between the Carnot groups satisfying $dF=\phi$.

We now give an explicit coordinate representation of free Carnot groups of step 2. Fix an integer $r\geq 2$ and denote $n=r+r(r-1)/2$. In $\bbR^{n}$ denote the coordinates by $x_{i}$, $1\leq i\leq r$, and $x_{ij}$, $1\leq j<i\leq r$. Let $\partial_{i}$ and $\partial_{ij}$ denote the standard basis vectors in this coordinate system. Define $n$ vector fields on $\bbR^n$ by:
\[ X_{k}:=\partial_{k}+\sum_{j>k} \frac{x_{j}}{2}\partial_{jk}-\sum_{j<k}\frac{x_{j}}{2}\partial_{kj} \qquad \mbox{if }1\leq k \leq r,\]
\[X_{kj}:=\partial_{kj} \qquad \mbox{if } 1\leq j<k\leq r.\]

\begin{definition}\label{freegroupstep2}
Define the \emph{free Carnot group of step 2 and $r$ generators} by $\bbG_r:=(\bbR^{n}, \, \cdot)$, where the product $x\cdot y \in \bbG_r$ of $x,y\in \bbG_r$ is given by:
\[(x\cdot y)_{k} = x_{k}+y_{k} \qquad \mbox{if }1\leq k\leq r,\]
\[(x\cdot y)_{ij} = x_{ij}+y_{ij}+\frac{1}{2}(x_{i}y_{j}-y_{i}x_{j}) \qquad \mbox{if }1\leq j<i\leq r.\]
The Carnot structure of $\bbG_r$ is given by
\[V_{1}=\mathrm{Span} \{X_{k}\colon 1\leq k \leq r\} \mbox{ and }V_{2}=\mathrm{Span} \{X_{kj}\colon 1\leq j<k\leq r\}.\]
\end{definition}

Note that free Carnot groups of step 2 are exactly those that are isomorphic to a Carnot group $\bbG_r$ for some $r$. It is easily verified that for $1\leq j<k \leq r$ and $1\leq i\leq r$:
\[ [X_{k}, X_{j}]=X_{kj} \mbox{ and } [X_{i}, X_{kj}]=0.\]
The above presentation of $\bbG_{r}$ is in exponential coordinates of the first kind. The identity element of $\bbG_{r}$ is $0$ and the inverse of $x\in \bbG_{r}$ is $x^{-1}=-x$. 

Since $\bbG_{r}$ is simply $\bbR^{n}$ as a manifold, all the tangent spaces of $\bbG_{r}$ are naturally identified with $\bbR^{n}$. Recall that $v\in \bbR^{n}$ is horizontal at $p\in \bbG_{r}$ if $v=E(p)$ for some $E\in V_{1}$. The following proposition shows how horizontal curves in $\bbG_r$ are obtained by lifting curves in $\bbR^r$. This follows directly from the definitions of $X_{1}, \ldots, X_{r}$, so the proof is omitted.

\begin{proposition}\label{horizontalequation}
A vector $v\in \bbR^{n}$ is horizontal at $p\in \bbG_r$ if and only if for every $1\leq j<i\leq r$:
\[v_{ij}=\frac{1}{2}(p_{i}v_{j}-p_{j}v_{i}).\]
An absolutely continuous curve $\gamma \colon [a,b]\to \bbG_r$ is horizontal if and only if for every $1\leq j<i\leq r$:
\[\gamma_{ij}(t)=\gamma_{ij}(a) + \frac{1}{2}\int_{a}^{t} (\gamma_{i}\gamma_{j}'-\gamma_{j}\gamma_{i}')\]
for every $t\in [a,b]$.
\end{proposition}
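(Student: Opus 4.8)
The plan is to unwind both statements directly from the explicit formula for the vector fields $X_1,\dots,X_r$ in Definition~\ref{freegroupstep2}; the vector characterisation is the substantive point, and the curve characterisation then follows from the fundamental theorem of calculus for absolutely continuous functions. For the vector part, recall $v\in\bbR^n$ is horizontal at $p$ exactly when $v=E(p)$ for some $E\in V_1=\mathrm{Span}\{X_1,\dots,X_r\}$. Writing $E=\sum_{k=1}^r c_kX_k$ and substituting the formula for $X_k$,
\[
E(p)=\sum_{k=1}^{r}c_k\partial_k+\sum_{k=1}^{r}c_k\Big(\sum_{j>k}\tfrac{p_j}{2}\partial_{jk}-\sum_{j<k}\tfrac{p_j}{2}\partial_{kj}\Big),
\]
I would then read off components. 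The $\partial_k$-coordinate ($1\le k\le r$) of $E(p)$ is $c_k$, forcing $c_k=v_k$; and for $1\le j<i\le r$ the coordinate $\partial_{ij}$ receives exactly two contributions, namely $+\tfrac{p_i}{2}c_j$ from the $\partial_{ij}$-summand inside $X_j$ and $-\tfrac{p_j}{2}c_i$ from the $\partial_{ij}$-summand inside $X_i$, so that $v_{ij}=\tfrac12(p_ic_j-p_jc_i)=\tfrac12(p_iv_j-p_jv_i)$. Thus $v=E(p)$ implies the stated identity; conversely, given $v$ satisfying the identity, the field $E:=\sum_k v_kX_k$ satisfies $E(p)=v$ by the same computation, so $v$ is horizontal.

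For the curve part, I would invoke the fact recorded in Section~\ref{preliminaries} that an absolutely continuous $\gamma$ is horizontal if and only if $\gamma'(t)$ is horizontal at $\gamma(t)$ for a.e.\ $t$. By the vector characterisation just established, this is equivalent to
\[
\gamma_{ij}'(t)=\tfrac12\big(\gamma_i(t)\gamma_j'(t)-\gamma_j(t)\gamma_i'(t)\big)\qquad\text{for a.e.\ }t,\ 1\le j<i\le r.
\]
Since $\gamma$ is absolutely continuous into $\bbR^n$, each $\gamma_i$ is absolutely continuous (hence bounded on $[a,b]$) with $\gamma_i'\in L^1[a,b]$, so the right-hand side lies in $L^1[a,b]$; integrating from $a$ to $t$ and using absolute continuity of $\gamma_{ij}$ gives the claimed integral formula. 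Conversely, if the integral formula holds for every $t$, then since the integrand is in $L^1$ the Lebesgue differentiation theorem recovers the pointwise identity a.e., so $\gamma'$ is horizontal a.e.\ and $\gamma$ is horizontal.

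There is essentially no genuine obstacle here: the proposition is a direct translation of Definition~\ref{freegroupstep2} through the definition of "horizontal". The only places calling for a little care are the combinatorial bookkeeping of indices in the double sums defining $E(p)$ — one must verify that precisely the summands coming from $X_i$ and $X_j$ (and no others) contribute to the coordinate $\partial_{ij}$ — and the mild regularity observation that the quadratic expression $\gamma_i\gamma_j'-\gamma_j\gamma_i'$ is integrable, which legitimises passing between the pointwise and integral forms of the horizontality condition.
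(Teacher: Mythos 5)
Your proof is correct and is exactly the direct verification the paper has in mind: the paper omits the proof, stating that the proposition "follows directly from the definitions of $X_1,\ldots,X_r$", and your component-by-component computation of $E(p)$ together with the standard absolute-continuity argument is precisely that verification. The index bookkeeping (only $X_i$ and $X_j$ contribute to the $\partial_{ij}$ coordinate) and the integrability of $\gamma_i\gamma_j'-\gamma_j\gamma_i'$ are handled correctly.
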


If $(\gamma_{i}(a),\gamma_{j}(a))=(0,0)$ then $\frac{1}{2}\int_{a}^{t} (\gamma_{i}\gamma_{j}'-\gamma_{j}\gamma_{i}')$ can be interpreted as the signed area of the planar region enclosed by the curve $(\gamma_{i},\gamma_{j})|_{[a,t]}$ and the straight line segment joining $(0,0)$ to $(\gamma_{i}(t),\gamma_{j}(t))$.

\begin{definition}\label{horizontallift}
Suppose $\varphi\colon [a,b] \to \bbR^{r}$ is absolutely continuous, $p\in \bbG_r$ and $\varphi(a)=(p_{1},\ldots, p_{r})$. Define $\gamma\colon [a,b] \to \bbG_r$ by $\gamma_{i}=\varphi_{i}$ for $1\leq i \leq r$ and, for every $1\leq j<i\leq r$,
\[\gamma_{ij}(t)=p_{ij} + \frac{1}{2} \int_{a}^{t} (\varphi_{i}\varphi_{j}'-\varphi_{j}\varphi_{i}')\]
for every $t\in [a,b]$. We say that $\gamma$ is the \emph{horizontal lift} of $\varphi$ starting at $p$.
\end{definition}

Proposition \ref{horizontalequation} shows that the horizontal lift of an absolutely continuous curve in $\bbR^r$ is a horizontal curve in $\bbG_r$. 

Direct calculation shows that if $E\in V_{1}$ then
\begin{equation}\label{linestolines}x\exp (tE)= x (tE(0)) = x+tE(x)\end{equation}
for any $x\in \bbG_{r}$ and $t\in \bbR$. For the first term, notice if $E=\sum_{i} a_{i}X_{i} + \sum_{k>j}a_{kj}X_{kj}$ then $\exp(tE)$ is the element of $\bbR^{n}$ whose coordinates are the $ta_{i}$ and $ta_{kj}$. For the second and third term we are using the identification of tangent spaces with $\bbR^{n}$, so that $tE(0)$ and $tE(x)$ are elements of $\bbR^{n}$ calculated using the coordinate form of the $X_{i}$ and $X_{kj}$ given earlier. Equation \eqref{linestolines} expresses that `horizontal lines' are preserved by group translations. The crucial point is that in step two groups left translations are linear, with respect to exponential coordinates.

For computations it will be useful to use the Koranyi quasi-distance given by
\begin{align}\label{Koranyi} d_{K}(x)=(|x_{H}|^{4}+|x_{V}|^{2})^{1/4}, \quad \mbox{ where }x=(x_{H},x_{V})\in \bbR^r\times \bbR^{r(r-1)/2}, \end{align} 
and \[d_{K}(x,y)=d_{K}(x^{-1}y).\]
There exists a constant $C_{K}>1$ such that $C_{K}^{-1}d\leq d_{K}\leq C_{K}d$ (e.g. see \cite{BLU07}). Here $d$ is the CC distance in $\bbG_{r}$ induced by the basis $X_{1},\ldots, X_{r}$ of $V_{1}$. It follows that
\begin{equation}\label{sqrtdistance}
d((x_{H},x_{V}))\geq C_{K}^{-1}|x_{V}|^{1/2} \quad \mbox{for every} \quad (x_{H},x_{V})\in \bbG_{r}.
\end{equation}

\section{Maximality implies differentiability if the CC distance is differentiable}\label{strongmaximal}
In this section we show the equivalence of differentiability of the CC distance and `maximality implies differentiability' (Proposition \ref{equivalence}). We then show free Carnot groups of step 2 enjoy both properties (Theorem \ref{thmdifferentiabilityofdistance}).

Let $\bbG$ be a Carnot group represented in exponential coordinates and $r:=\dim V_{1}$.

\begin{lemma}\label{lipismaximal}
Let $f\colon \bbG \to \bbR$ be a Lipschitz map. Then:
\[\mathrm{Lip}_{\bbG}(f)=\sup\{|Ef(x)| \colon x\in \bbG, \, E\in V_{1}, \, \omega(E)=1, \, Ef(x) \mbox{ exists}\}.\]
\end{lemma}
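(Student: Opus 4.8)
The plan is to prove the two inequalities separately. Write $S$ for the supremum on the right-hand side. The inequality $S \le \mathrm{Lip}_{\bbG}(f)$ is the easy direction: whenever $E \in V_1$ with $\omega(E) = 1$ and $Ef(x)$ exists, we have
\[
|Ef(x)| = \lim_{t \to 0}\frac{|f(x\exp(tE)) - f(x)|}{|t|} \le \mathrm{Lip}_{\bbG}(f)\,\lim_{t\to 0}\frac{d(x, x\exp(tE))}{|t|} = \mathrm{Lip}_{\bbG}(f),
\]
using Lemma \ref{horizontaldistances} for the last equality (since $d(x, x\exp(tE)) = |t|\omega(E) = |t|$). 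Taking the supremum over all such $x, E$ gives $S \le \mathrm{Lip}_{\bbG}(f)$.

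For the reverse inequality $\mathrm{Lip}_{\bbG}(f) \le S$, the idea is to estimate $|f(x) - f(y)|$ for arbitrary $x, y \in \bbG$ by moving along a nearly length-minimizing horizontal curve from $x$ to $y$ and applying the fundamental theorem of calculus. Fix $\eps > 0$ and, using Remark \ref{horizcurve}, choose a piecewise-linear horizontal curve $\gamma$ from $x$ to $y$ with $L_{\bbG}(\gamma) \le d(x,y) + \eps$. On each linear piece, $\gamma$ has the form $t \mapsto \gamma(t_0)\exp((t - t_0)E)$ for some fixed $E \in V_1$ (this is exactly what a straight horizontal segment looks like: a left-translate of a horizontal line through the origin), and on that piece the function $t \mapsto f(\gamma(t))$ is Lipschitz, hence differentiable a.e., with derivative equal to $Ef(\gamma(t))$ at points of differentiability. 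Thus
\[
|f(y) - f(x)| \le \int_a^b \left| \frac{d}{dt} f(\gamma(t)) \right| \dd t \le \int_a^b |E(t)f(\gamma(t))| \dd t \le S \int_a^b \omega(E(t)) \dd t = S\, L_{\bbG}(\gamma),
\]
where $E(t)$ is the (piecewise constant) direction of $\gamma$ at time $t$ and we used that $|E(t)f(\gamma(t))| \le S\,\omega(E(t))$ by homogeneity of directional derivatives in $E$ together with the definition of $S$. Since $L_{\bbG}(\gamma) \le d(x,y) + \eps$ and $\eps$ is arbitrary, $|f(y) - f(x)| \le S\, d(x,y)$, so $\mathrm{Lip}_{\bbG}(f) \le S$.

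The main obstacle is making the middle step fully rigorous: one must justify that $f \circ \gamma$ is absolutely continuous on $[a,b]$ (not merely differentiable a.e.) so that the fundamental theorem of calculus applies, and that its a.e.\ derivative is controlled by the directional derivatives of $f$. Absolute continuity follows because $f$ is Lipschitz and $\gamma$ is Lipschitz as a map into $(\bbG, d)$ (being piecewise linear horizontal), so $f \circ \gamma$ is Lipschitz $[a,b] \to \bbR$. For the derivative identification, on the interior of each linear piece one has $\gamma(t + h) = \gamma(t)\exp(hE)$, so the difference quotient of $f \circ \gamma$ at $t$ is exactly the difference quotient defining $E f(\gamma(t))$; hence wherever $f \circ \gamma$ is differentiable at an interior point of a piece, its derivative is $Ef(\gamma(t))$ and in particular $Ef(\gamma(t))$ exists there. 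One also needs the elementary scaling fact that if $\omega(E) \ne 0$ and $(E/\omega(E))f(p)$ exists then it has absolute value at most $S$, whence $|Ef(p)| \le S\,\omega(E)$; the case $E = 0$ is trivial. Assembling these observations gives the chain of inequalities above and completes the proof.
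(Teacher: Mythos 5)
Your proof is correct and follows essentially the same route as the paper: the easy inequality via Lemma \ref{horizontaldistances}, and the reverse inequality by integrating $(f\circ\gamma)'$ along a nearly optimal piecewise-linear horizontal curve from Remark \ref{horizcurve} and identifying the a.e.\ derivative with a directional derivative on each linear piece. The only cosmetic difference is that the paper normalizes the curve to unit speed, whereas you allow variable speed and invoke the homogeneity $|Ef(p)|\le S\,\omega(E)$; both handle this point equivalently.
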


\begin{proof}
Temporarily denote the right side of the above equality by $\mathrm{Lip}_{\mathrm{D}}(f)$. Note that $\mathrm{Lip}_{\mathrm{D}}(f)<\infty$, since for any $E\in V_{1}$ with $\omega(E)=1$ we have $|Ef(x)|\leq \mathrm{Lip}_{\bbG}(f)$. Let $x,y\in \bbG$. By Remark \ref{horizcurve}, $d(x,y)$ is equivalently given by the infimum of lengths of Lipschitz horizontal curves joining $x$ to $y$ which are each the concatenation of finitely many straight lines. Choose such a Lipschitz horizontal curve $\gamma \colon [0,L]\to \bbG$ such that $|(p\circ \gamma)'(t)|=1$ for almost every $t$. Let $G$ be the set of $t\in [0,L]$ for which:
\begin{itemize}
\item $(f\circ \gamma)'(t)$ exists,
\item $\gamma'(t)$ exists,
\item $\gamma'(t)\in \mathrm{Span}\{X_{i}(\gamma(t)) \colon 1\leq i \leq r\}$,
\item $|(p\circ \gamma)'(t)|=1$.
\end{itemize}
Since $\gamma$ is a horizontal curve and $f\circ \gamma$ is Lipschitz, we know that $G$ has full measure. We estimate as follows:
\begin{align*}
|f(x)-f(y)|&= \Big|\int_{0}^{L} (f\circ \gamma)' \Big|\\
&\leq L\ \sup \{ |(f\circ \gamma)'(t)|\colon t\in G\}\\
&\leq L\ \mathrm{Lip}_{\mathrm{D}}(f)
\end{align*}
using in the last line that $\gamma$ is a concatenation of lines and $|(p\circ \gamma)'(t)|=1$ so $(f\circ \gamma)'(t)$ is of the form $Ef(\gamma(t))$ for some $E\in V_{1}$ with $\omega(E)=1$. Taking an infimum over curves $\gamma$ then yields $\mathrm{Lip}_{\bbG}(f)\leq \mathrm{Lip}_{\mathrm{D}}(f)$.

For the opposite inequality fix $x\in \bbG$ and $E\in V_{1}$ such that $\omega(E)=1$ and $Ef(x)$ exists. Use Lemma \ref{horizontaldistances} to estimate as follows:
\begin{align*}
|Ef(x)|&= \Big| \lim_{t\to 0} \frac{f(x\exp tE)-f(x)}{t} \Big|\\
&\leq \limsup_{t \to 0} \frac{\mathrm{Lip}_{\bbG}(f)d(x,x\exp tE)}{t}\\
&=\mathrm{Lip}_{\bbG}(f).
\end{align*}
Hence $\mathrm{Lip}_{\mathrm{D}}(f) \leq \mathrm{Lip}_{\bbG}(f)$ which concludes the proof.
\end{proof}

We record the following property of the CC distance for later use.

\begin{lemma}\label{distanceinequality}
Let $u=\exp(E)$ for some $E\in V_{1}$. Then
\[d(uz) \geq d(u)+ \langle p(z), p(u)/d(u)\rangle \qquad \mbox{for any }z\in \bbG.\]
\end{lemma}

\begin{proof}
The point $u$ is of the form $u=(u_{h},0)$ for some $u_{h} \in \bbR^r$. We may assume that $d(u)=1$ since the general statement can be deduced using dilations. First recall $d(x)\geq |p(x)|$ for all $x\in \bbG$, while $d(u)=|p(u)|$ for our particular choice of $u$. Clearly also $\langle p(z),p(u)\rangle = \langle z,u\rangle$ for such $u$. We use Pythagoras' theorem and $d(u)=|p(u)|=1$ to estimate as follows:
\begin{align*}
d(uz) &\geq | p(u)(1+\langle p(z), p(u) \rangle ) + (p(z)-\langle p(z), p(u) \rangle p(u) )| \\
&\geq 1+\langle p(z), p(u)\rangle.
\end{align*}
\end{proof}

\begin{remark}\label{derivativeformula}
We can use Lemma \ref{distanceinequality} to show that if $u=\exp(E)$ for some $E\in V_{1}$ and the CC distance $d$ is differentiable at $u$, then the Pansu differential of $d$ at $u$ is given by $z\mapsto \langle p(z), p(u)/d(u)\rangle$.

To see this, suppose the CC distance $d$ is differentiable at $u$ as above with Pansu differential $L$. Since $L$ is $\bbG$-linear, \cite[Proposition 3.10]{Cas14} asserts that there exists $V\in \bbR^{r}$ such that $L(z)=\langle p(z),V\rangle$. In particular, for all $z\in \bbG$,
\[d(uz) \leq d(u)+ \langle p(z),V\rangle +o(d(z)).\]
Using Lemma \ref{distanceinequality}, we obtain that for $z\in \bbG$:
\[d(u)+ \langle p(z), p(u)/d(u)\rangle \leq d(u)+ \langle p(z),V\rangle +o(d(z)).\]
Hence
\[ \langle p(z)/d(z), p(u)/d(u)\rangle \leq \langle p(z)/d(z),V\rangle +o(d(z))/d(z).\]
Choosing $z=\exp(\pm tX_{i})$ for $i=1,\ldots, r$, with $t\downarrow 0$, gives $u_{i}/d(u)\leq V_{i}$ and $-u_{i}/d(u)\leq -V_{i}$ for $i=1,\ldots, r$. Hence $V=p(u)/d(u)$, as desired.
\end{remark}

\begin{theorem}\label{maximalimpliesdiff}
Let $\bbG$ be a Carnot group in which the CC distance $d$ is differentiable in horizontal directions.

Let $f\colon \bbG \to \bbR$ be Lipschitz, $x\in \bbG$ and $E\in V_{1}$ with $\omega(E)=1$. Suppose $Ef(x)$ exists and $Ef(x)=\mathrm{Lip}_{\bbG}(f)$. Then $f$ is differentiable at $x$ with derivative $w\mapsto \mathrm{Lip}_{\bbG}(f)L(w)$, where $L$ is the derivative of $d$ at $\exp(E)$.
\end{theorem}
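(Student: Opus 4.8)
The plan is to normalize, reduce the problem to a statement about the CC distance at the horizontal point $u=\exp(E)$, and then transfer the differentiability of $d$ at $u$ to differentiability of $f$ at $x$ by a comparison argument. By considering $f/\mathrm{Lip}_{\bbG}(f)$ (the case $\mathrm{Lip}_{\bbG}(f)=0$ being trivial) we may assume $\mathrm{Lip}_{\bbG}(f)=1$, so that $Ef(x)=1=\mathrm{Lip}_{\bbG}(f)$ and $f$ is $1$-Lipschitz with respect to $d$. By left-translating, i.e. replacing $f$ by $y\mapsto f(xy)-f(x)$, we may also assume $x=0$ and $f(0)=0$. Under these normalizations the claim becomes: $f$ is Pansu differentiable at $0$ with differential $L$, where $L$ is the Pansu differential of $d$ at $u=\exp(E)$. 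By Remark \ref{derivativeformula}, $L(w)=\langle p(w),p(u)/d(u)\rangle = \langle p(w),p(u)\rangle$ since $d(u)=\omega(E)=1$ by Lemma \ref{horizontaldistances}; note also $Ef(0)=1$ says exactly $\frac{d}{dt}\big|_{0}f(\exp tE)=1$.

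The key inequality is a lower bound: for any $y\in\bbG$ and any $t>0$,
\[
f(y) \;=\; f(y) - f(\exp(-tE)) + f(\exp(-tE)) \;\le\; \mathrm{Lip}_{\bbG}(f)\, d(\exp(-tE), y) + f(\exp(-tE)).
\]
Since $\exp(-tE) = (tu)^{-1}$ after identifying $u$ with a point of the first layer, and left translations act by $d((tu)^{-1}, y) = d(tu\cdot(tu)^{-1}, tu\cdot y)=d(tu, tu\, y)$ — wait, more directly: $d(\exp(-tE),y) = d(0, \exp(tE)y) = d(\exp(tE)y)= d(\delta_t(u)\, y)$. Using $\delta_t(u)y = \delta_t(u\,\delta_{1/t}(y))$ and the $t$-homogeneity of $d$, this equals $t\, d(u\,\delta_{1/t}(y))$. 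Now fix $y$ small and choose $t = d(y)^{1/2}$ (or any scale with $d(\delta_{1/t}y)\to 0$, e.g. $t\to 0$ slowly relative to $d(y)$); then $\delta_{1/t}(y)\to 0$, so we may apply differentiability of $d$ at $u$:
\[
d(u\,\delta_{1/t}(y)) \;=\; d(u) + L(\delta_{1/t}(y)) + o(d(\delta_{1/t}y)) \;=\; 1 + \tfrac{1}{t}L(y) + o\!\big(d(y)/t\big),
\]
using that $L$ is $\bbG$-linear so $L(\delta_{1/t}y)=\frac1t L(y)$ and $d(\delta_{1/t}y)=\frac1t d(y)$. Meanwhile $f(\exp(-tE)) = -Ef(0)\,t + o(t) = -t + o(t)$ by the hypothesis $Ef(0)=\mathrm{Lip}_{\bbG}(f)=1$. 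Combining, and multiplying by $\mathrm{Lip}_{\bbG}(f)=1$,
\[
f(y) \;\le\; t\Big(1 + \tfrac1t L(y) + o(d(y)/t)\Big) - t + o(t) \;=\; L(y) + o(d(y)) + o(t).
\]
With the choice $t=d(y)^{1/2}$ one has $o(t)=o(d(y)^{1/2})$, which is \emph{not} $o(d(y))$; so one must instead send $t\downarrow 0$ first for fixed $y$ — but then $\delta_{1/t}(y)\not\to 0$. The correct device is to take $t = t(y)$ with $d(y) \ll t(y) \ll 1$, e.g. $t(y) = \min(d(y)^{1/2}, \text{something})$; then $d(\delta_{1/t}y) = d(y)/t \to 0$, while $o(t) = o(t(y))$ can be absorbed by choosing $t(y)$ tending to $0$ fast enough that $o(t(y)) = o(d(y))$ along the relevant null sequences — this is handled by a standard $\ep$–$\delta$ extraction rather than a single closed-form choice of $t$. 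This yields $\limsup_{y\to 0} (f(y)-L(y))/d(y) \le 0$.

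For the matching upper bound on $-(f(y)-L(y))/d(y)$, i.e. the reverse inequality, run the same argument with $+tE$ in place of $-tE$:
\[
f(y) \;\ge\; f(\exp(tE)) - \mathrm{Lip}_{\bbG}(f)\, d(\exp(tE), y) \;=\; t - t\, d(u^{-1}\delta_{1/t}(y)) + o(t),
\]
and now $u^{-1} = \exp(-E) = \exp(E')$ with $\omega(E')=1$ is again a horizontal point, so $d$ is differentiable there too by hypothesis, with differential $w\mapsto \langle p(w), p(u^{-1})\rangle = -L(w)$ by Remark \ref{derivativeformula}. The same expansion gives $d(u^{-1}\delta_{1/t}y) = 1 - \frac1t L(y) + o(d(y)/t)$, hence $f(y) \ge L(y) + o(d(y)) + o(t)$, i.e. the same lower bound — so in fact both one-sided estimates combine to give $|f(y)-L(y)| \le o(d(y))$, which is Pansu differentiability at $0$ with differential $L$, as required. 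Unwinding the normalizations gives differentiability of the original $f$ at the original $x$ with differential $w\mapsto \mathrm{Lip}_{\bbG}(f)L(w)$.

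\textbf{Main obstacle.} The genuinely delicate point is the coupling of the two scales: the auxiliary parameter $t$ must go to $0$ (so that the linear term $f(\exp(\pm tE)) \approx \pm t$ is controlled and $o(t)$ is negligible against $d(y)$), yet simultaneously $\delta_{1/t}(y)\to 0$ (so that differentiability of $d$ at the horizontal point is applicable). Making these compatible requires choosing $t=t(y)\to 0$ with $d(y)/t(y)\to 0$ and extracting the error terms uniformly; this is the technical heart of the proof and is the step I expect to write out most carefully. The other ingredients — Lipschitz comparison of $f$ with $d$, the explicit form of the differential of $d$ from Remark \ref{derivativeformula}, $\bbG$-linearity of $L$ under dilations, and the identity $d(\delta_t(u)\,y) = t\,d(u\,\delta_{1/t}(y))$ — are routine.
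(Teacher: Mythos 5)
Your overall strategy is the paper's: compare $f$ with $f$ evaluated at $x\exp(\mp tE)$ via the Lipschitz bound, use left-invariance and the identity $d(x\exp(-tE),xw)=d(\exp(tE)w)=t\,d(\exp(E)\delta_{t^{-1}}(w))$ to reduce everything to the first-order expansion of $d$ at the horizontal point $\exp(E)$, and run the two one-sided estimates. (The paper handles the reverse inequality with $w^{-1}$ and the expansion at $\exp(E)$ rather than invoking differentiability at $\exp(-E)$, but your variant is equally covered by the hypothesis.)

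The one genuine gap is exactly the step you flag: the choice of the auxiliary scale $t$. Your prescription $d(y)\ll t(y)\ll 1$ points in the wrong direction, and the objection you raise against $t=d(y)^{1/2}$ (that $o(t)$ is not $o(d(y))$ when $t\gg d(y)$) applies verbatim to \emph{any} choice with $d(y)/t(y)\to 0$; the subsequent appeal to an ``$\ep$--$\delta$ extraction'' does not repair this as stated. The resolution in the paper is to take $t$ \emph{proportional} to $d(w)$: fix $\ep>0$, use differentiability of $d$ at $\exp(E)$ to get a radius $\alpha\le\ep$ with $d(\exp(E)z)-d(\exp(E))\le L(z)+\ep\,d(z)$ for all $d(z)\le\alpha$ (one does not need $z\to 0$, only $z$ in a fixed small ball), use existence of $Ef(x)$ to get $\delta>0$ with $|f(x\exp(tE))-f(x)-tEf(x)|\le\alpha^2|t|$ for $|t|\le\delta$, and then set $t=\alpha^{-1}d(w)$. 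With this choice $d(\delta_{t^{-1}}(w))=\alpha$, so the $d$-expansion applies and its error contributes $t\cdot\ep\alpha=\ep\,d(w)$, while the directional-derivative error contributes $\alpha^2 t=\alpha\,d(w)\le\ep\,d(w)$. Both error terms are $O(\ep\,d(w))$ and the argument closes. Once you replace your scale choice by this fixed-ratio one (choosing $\delta$ \emph{after} $\alpha$ so the $o(t)$ term is quantified against $\alpha|t|$), the rest of your proposal goes through.
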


\begin{proof}
Let $0<\varepsilon\leq 1/2$. Letting $L$ be the differential of the CC distance at $\exp(E)$, choose $0<\alpha \leq \varepsilon$ such that whenever $d(z) \leq \alpha$:
\[ d(\exp(E)z) - d(\exp(E)) \leq L(z) + \varepsilon d(z).\]
Use existence of $Ef(x)$ to fix $\delta>0$ such that whenever $|t|\leq \delta$:
\[|f(x\exp(tE))-f(x)-tEf(x)|\leq \alpha^{2}|t|.\]

Suppose that $0<d(w) \leq \alpha \delta$ and $t=\alpha^{-1}d(w)$. Then $0<t\leq \delta$, $d(\delta_{t^{-1}}(w))=\alpha$ and $2d(w)=2\alpha t\leq t$. Recall that $\omega(E)=1$ implies $d(\exp(E))=1$. We use also left invariance of the Carnot-Carath\'{e}odory distance to estimate as follows:
\begin{align*}
f(xw)-f(x) &= (f(xw) - f(x\exp(-tE))) + (f(x\exp(-tE)) - f(x))\\
&\leq \mathrm{Lip}_{\bbG}(f)d(xw,x\exp(-tE)) - tEf(x) + \alpha^{2}t\\
&= \mathrm{Lip}_{\bbG}(f)d(\exp(tE)w) - t\mathrm{Lip}_{\bbG}(f) + \alpha^{2}t\\
&= t \mathrm{Lip}_{\bbG}(f)\big( d(\exp(E)\delta_{t^{-1}}(w)) - d(\exp(E))\big) + \alpha^{2}t\\
&\leq t \mathrm{Lip}_{\bbG}(f)\big( L( \delta_{t^{-1}}(w)) + \varepsilon d(\delta_{t^{-1}}(w))\big) + \alpha^{2}t\\
&= \mathrm{Lip}_{\bbG}(f)L(w) + \varepsilon \mathrm{Lip}_{\bbG}(f) d(w) + \alpha d(w)\\
&\leq \mathrm{Lip}_{\bbG}(f)L(w) + \varepsilon (\mathrm{Lip}_{\bbG}(f) +1)d(w).
\end{align*}
For the opposite inequality we have:
\begin{align*}
f(xw)-f(x) &= (f(xw)-f(x\exp(tE))) + (f(x\exp(tE))-f(x))\\
&\geq -\mathrm{Lip}_{\bbG}(f)d(xw,x\exp(tE)) + tEf(x) - \alpha^{2}t\\
&= -\mathrm{Lip}_{\bbG}(f) d(\exp(tE)w^{-1})+ t\mathrm{Lip}_{\bbG}(f) - \alpha^{2}t\\
&= - t\mathrm{Lip}_{\bbG}(f)\big(d(\exp(E)\delta_{t^{-1}}(w^{-1})) - d(\exp(E))\big) - \alpha^{2}t\\
&\geq -t\mathrm{Lip}_{\bbG}(f)\big( L(\delta_{t^{-1}}(w^{-1})) + \varepsilon d(\delta_{t^{-1}}(w^{-1})) \big) - \alpha^{2}t\\
&= -\mathrm{Lip}_{\bbG}(f)L(w^{-1}) - \varepsilon \mathrm{Lip}_{\bbG}(f)d(w^{-1}) - \alpha^{2}t\\
&= \mathrm{Lip}_{\bbG}(f)L(w) - \varepsilon\mathrm{Lip}_{\bbG}(f)d(w)-\alpha d(w)\\
&\geq \mathrm{Lip}_{\bbG}(f)L(w) - \varepsilon(\mathrm{Lip}_{\bbG}(f)+1)d(w).
\end{align*}
This shows that $d(w) \leq \alpha\delta$ implies:
\[|f(xw)-f(x)-\mathrm{Lip}_{\bbG}(f)L(w) |\leq \varepsilon (\mathrm{Lip}_{\bbG}(f)+1) d(w).\]
Hence $f$ is Pansu differentiable at $x$ with differential $w\mapsto \mathrm{Lip}_{\bbG}(f)L(w)$.
\end{proof}

The CC distance satisfies $\mathrm{Lip}_{\bbG}(d)=1$ in any Carnot group. Hence the following proposition shows that the CC distance always admits a maximal directional derivative.

\begin{proposition}\label{Ed=1}
Suppose $\bbG$ is a general Carnot group and $u=\exp(E)$ for some $E\in V_{1}$ with $\omega(E)=1$. Then the directional derivative of the distance $d$ satisfies $Ed(u)=1$.
\end{proposition}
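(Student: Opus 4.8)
The plan is to compute the directional derivative $Ed(u)=\lim_{t\to 0}\bigl(d(u\exp(tE))-d(u)\bigr)/t$ directly, using the explicit geometry of horizontal lines and the basic distance estimates already established. First I would recall from Lemma \ref{horizontaldistances} that $d(\exp(E))=\omega(E)=1$ and, more generally, $d(x,x\exp(sE))=s\omega(E)=s$ for $s\in\bbR$. Since $u=\exp(E)$, the point $u\exp(tE)$ is the endpoint of the horizontal line through $0$ in direction $E$ travelled for time $1+t$; concretely, $u\exp(tE)=\exp((1+t)E)$ because the flow of a left-invariant horizontal vector field along a line starting at $0$ just scales the parameter. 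Hence $d(u\exp(tE))=d(\exp((1+t)E))=(1+t)\omega(E)=1+t$ for $t>-1$, so the difference quotient is identically $\bigl((1+t)-1\bigr)/t=1$, giving $Ed(u)=1$ immediately.

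Concretely I would justify $u\exp(tE)=\exp((1+t)E)$ as follows: in exponential coordinates the curve $s\mapsto\exp(sE)$ is the integral curve of the left-invariant vector field $E$ starting at $0$, and $u\exp(tE)$ is by definition the value at time $t$ of the integral curve of $E$ starting at $u=\exp(E)$; by left-invariance this is $l_u$ applied to the integral curve starting at $0$, i.e. $u\exp(tE)=l_u(\exp(tE))$, which by the one-parameter subgroup property $\exp(E)\exp(tE)=\exp((1+t)E)$ equals $\exp((1+t)E)$. Then Lemma \ref{horizontaldistances} (second bullet, with $x=0$) gives $d(\exp((1+t)E))=d(0,0\exp((1+t)E))=(1+t)\omega(E)=1+t$.

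There is essentially no obstacle here: the statement is a one-line consequence of the fact that horizontal lines through the origin are (length-minimizing) geodesics that are parametrized proportionally to arc length, which is exactly the content of Lemma \ref{horizontaldistances}. The only point requiring a word of care is checking that $u\exp(tE)=\exp((1+t)E)$ holds on the nose in the given coordinates — for the free step-$2$ group this is the identity \eqref{linestolines} with $x=\exp(E)$, and in general it is the one-parameter subgroup property of $\exp$ — so I would state that explicitly and then invoke Lemma \ref{horizontaldistances}. One could alternatively phrase it via $d(u\exp(tE))-d(u)=d(\exp((1+t)E))-d(\exp(E))$ and the homogeneity $d(\exp(\lambda E))=\lambda\,\omega(E)$ for $\lambda>0$ (valid for $1+t>0$, which holds for small $|t|$); either way the difference quotient is constant equal to $1$ and the limit is $1$.
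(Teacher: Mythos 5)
Your proof is correct and follows essentially the same route as the paper: both rest on the identity $\exp(E)\exp(tE)=\exp((1+t)E)$ (the one-parameter subgroup property, which the paper attributes to the Baker--Campbell--Hausdorff formula) together with the homogeneity $d(\exp(\lambda E))=\lambda\,\omega(E)$ for $\lambda>0$, making the difference quotient identically $1$. No gaps.
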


\begin{proof}
As an easy consequence of the Baker-Campbell-Hausdorff formula:
\begin{align*}
\frac{d(u\exp tE)-d(u)}{t}&= \frac{d(\exp E \exp tE)-d(\exp E)}{t}\\
&=\frac{d(\exp(t+1)E)-d(\exp E)}{t}\\
&=\frac{(t+1)d(\exp E)-d(\exp E)}{t}\\
&=d(\exp E)\\
&=1.
\end{align*}
This implies $Ed(u)=1$ as desired.
\end{proof}

\begin{proof}[Proof of Proposition \ref{equivalence}:]
As a consequence of Proposition \ref{Ed=1}, if in a Carnot group $\bbG$ the existence of a maximal directional derivative implies Pansu differentiability (i.e. the analogue of Theorem \ref{maximalimpliesdiff} holds), then the distance is Pansu differentiable in horizontal directions. Combining this with Theorem \ref{maximalimpliesdiff} the conclusion follows.
\end{proof}

Now recall that $\bbG_{r}$ is the free Carnot group of step $2$ and rank $r$, represented by $\bbR^{n}$ with $n=r+r(r-1)/2$. We will now show that the CC distance in $\bbG_{r}$ is differentiable in horizontal directions, hence maximality implies differentiability in $\bbG_{r}$. Let $P=r(r-1)/2$ be the number of vertical coordinates of $\bbG_{r}$.

\begin{lemma}\label{curve}
Fix $y\in \bbG_{r}$ such that $y_{1}>0$ and $y_{i}=0$ for $i>1$. Define the numbers $A=\max_{2\leq i\leq r}|y_{i1}|$ and $B=\max_{i>j>1}|y_{ij}|$. Then there exists a Lipschitz horizontal curve $\gamma\colon [0,1]\to \bbG_{r}$ which is a concatenation of horizontal lines such that:
\begin{enumerate}
\item $\gamma(0)=0$ and $\gamma(1)=y$,
\item The Lipschitz constant of $\gamma$ satisfies
\[\mathrm{Lip}_{\bbG}(\gamma)\leq y_{1} \max \left\{ \left(1+\frac{16P^4A^2}{y_{1}^4}\right)^{1/2},\, \left(1+\frac{288P^2B}{y_{1}^2}\right)^{1/2} \right\},\]
\item $\gamma'(t)$ exists for all $t\in [0,1]$ except finitely many points and satisfies
\[ |(p\circ \gamma)'(t) - p(y)|\leq \max \left\{ \frac{4P^2 A}{y_{1}},\, 24P\sqrt{B} \right\}.\]
\end{enumerate}
\end{lemma}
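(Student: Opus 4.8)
The idea is to build $\gamma$ by lifting an explicit absolutely continuous curve $\varphi \colon [0,1] \to \bbR^r$ via the horizontal lift construction of Definition \ref{horizontallift}, choosing $\varphi$ so that its lift hits the prescribed vertical coordinates of $y$ while keeping the Euclidean length (hence, by Lemma \ref{lipschitzhorizontal}, the $\bbG$-Lipschitz constant) under control. Since the vertical coordinate $\gamma_{ij}$ of the lift is $\frac12\int_0^t(\varphi_i\varphi_j' - \varphi_j\varphi_i')$, which is the signed area swept out in the $(x_i,x_j)$-plane, the task reduces to a planar problem: for each pair $(i,j)$ I need a piecewise-linear loop-like excursion in the $(x_i,x_j)$-coordinates that encloses the required signed area $y_{ij}$ while returning $\varphi_i,\varphi_j$ to their target endpoint values. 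Concretely, $\varphi_1$ should travel monotonically (roughly) from $0$ to $y_1$ (contributing the horizontal displacement), and the excursions in the coordinates $x_i$ ($i>1$) are small triangular detours—of size controlled by $A/y_1$ or $\sqrt B$—that produce the areas $y_{i1}$ and $y_{ij}$. The two cases in the max (the $A$-term and the $B$-term) correspond to whether one of the indices is $1$ (so the long $\varphi_1$-motion can be exploited to generate area cheaply, giving a linear-in-$A/y_1^2$ type bound) or both indices exceed $1$ (where both coordinates must make excursions, so the area scales like the square of the detour size, giving the $\sqrt B$ dependence).

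\textbf{Key steps.} First, I would split $[0,1]$ into $P$ consecutive subintervals, one for each vertical coordinate to be corrected, and on the subinterval associated to $(i,j)$ arrange $\varphi$ to move only in the coordinates relevant to generating area $y_{ij}$ (together with whatever motion of $\varphi_1$ is needed), leaving the other coordinates fixed there. Second, on each subinterval I would write down the explicit piecewise-linear path: for a pair $(i,1)$, a thin triangle in the $(x_1,x_i)$-plane of base comparable to $y_1$ and height $\sim |y_{i1}|/y_1$, traversed so its signed area equals $y_{i1}$; for a pair $(i,j)$ with $i>j>1$, a triangle in the $(x_i,x_j)$-plane with both legs $\sim \sqrt{|y_{ij}|}$. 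Third, I would concatenate, then reparametrize affinely so the domain is $[0,1]$ and the curve is Lipschitz; $\gamma$ is then the horizontal lift of this $\varphi$, which is horizontal and piecewise a concatenation of horizontal lines by Proposition \ref{horizontalequation} and \eqref{linestolines}. Fourth, I would bound $\mathrm{Lip}_{\bbE}(\varphi)$ on each piece: the horizontal-like motion contributes $\sim y_1$, and each detour contributes an additive term bounded by $4P^2A/y_1$ or $24P\sqrt B$ (the factors of $P$ arising from squeezing $P$ excursions into unit time, so each excursion runs on an interval of length $\sim 1/P$ and its velocity is scaled up by $P$); squaring and combining horizontal and vertical contributions gives exactly the stated bound in (2). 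Fifth, the same per-piece velocity bounds, now applied to $(p\circ\gamma)' = \varphi'$, give (3), since away from the finitely many corner points $\varphi'$ differs from $p(y) = (y_1,0,\dots,0)$ only by the detour velocities.

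\textbf{Main obstacle.} The genuine difficulty is the bookkeeping of constants: getting the signed areas to come out exactly right (including signs of $y_{ij}$, which forces a choice of orientation for each triangle) while simultaneously tracking how the factor $P$ from time-rescaling propagates through the velocity estimates, so that the final numerical constants $16P^4$, $288P^2$, $4P^2$, $24P$ are actually achieved and not merely "some constant." In particular one must be careful that correcting the coordinate $(i,j)$ on its designated subinterval does not disturb the already-corrected coordinates or the coordinates $(k,1)$—this is why the long $\varphi_1$-motion should be concentrated on a single subinterval (or else its interaction with later $x_i$-detours must be accounted for and subtracted off by an extra compensating excursion). Everything else—horizontality, the Lipschitz reparametrization, differentiability off the corners—is routine given Proposition \ref{horizontalequation}, Lemma \ref{lipschitzhorizontal}, and \eqref{linestolines}.
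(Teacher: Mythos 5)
Your plan follows essentially the same route as the paper: divide $[0,1]$ into $P$ subintervals, one per vertical coordinate, run the first horizontal coordinate at constant speed $y_{1}$ throughout, and on each subinterval make a piecewise-linear excursion of height $\sim P^{2}A/y_{1}$ (for pairs $(i,1)$, exploiting the long $x_{1}$-motion) or of size $\sim P\sqrt{B}$ in both coordinates (for pairs $(i,j)$ with $i>j>1$), followed by a compensating excursion cancelling the unwanted $(i,1)$- and $(j,1)$-areas, with the bounds in (2) and (3) coming from Lemma \ref{lipschitzhorizontal} and the per-piece velocities exactly as you describe. One caution: of the two options you offer for handling the interaction of the $x_{1}$-motion with the later detours, only the compensating-excursion one (which is what the paper implements via its six-segment construction) is viable, since concentrating the $\varphi_{1}$-motion on a single subinterval would force $|(p\circ\gamma)'(t)-p(y)|\geq y_{1}$ on the remaining subintervals and destroy conclusion (3).
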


\begin{proof}
Divide $[0,1]$ into $P$ subintervals $I_{q}=[q/P,(q+1)/P]$ for $0\leq q\leq P-1$. To each interval $I_{q}$ we assign a vertical coordinate $x_{ij}$, $1\leq j<i\leq r$, in such a way that every vertical coordinate is assigned to some interval $I_{q}$. 

We define the curve $\gamma$ inductively. Let $\gamma(0)=0$ and suppose $\gamma$ has already been defined on intervals $I_{1}\cup \ldots \cup I_{q-1}$ for some $0\leq q\leq P-1$, where $I_{-1}=\{0\}$ and $\gamma(0)=0$. We also assume $\gamma$ has been constructed so that $\gamma(q/P)$ satisfies:
\begin{enumerate}
\item $\gamma_{1}(q/P)=y_{1}q/P$,
\item $\gamma_{i}(q/P)=0$ for $i>1$,
\item $\gamma_{ij}(q/P)=y_{ij}$ if the coordinate $x_{ij}$ was assigned to one of the intervals $I_{1}, \ldots, I_{q-1}$,
\item $\gamma_{ij}(q/P)=0$ if the coordinate $x_{ij}$ was not assigned to one of the intervals $I_{1}, \ldots, I_{q-1}$.
\end{enumerate}
Let $x_{ij}$ be the vertical coordinate assigned to $I_{q}$. The construction of $\gamma$ on $I_{q}$ depends on whether $j=1$ or $j\neq 1$.

\begin{case}[$j=1$]
Let $\lambda=4P^{2}y_{i1}/y_{1}$. Since $\gamma(q/P)$ is already defined, we may define $\gamma$ on $I_{q}$ by:
\[\gamma'(t)=
\begin{dcases}
y_{1}X_{1}(\gamma(t))+\lambda X_{i}(\gamma(t)), & \frac{q}{P}< t < \frac{q}{P} + \frac{1}{2P} \\
y_{1}X_{1}(\gamma(t))-\lambda X_{i}(\gamma(t)), & \frac{q}{P} + \frac{1}{2P} < t < \frac{q+1}{P}
\end{dcases}
\]
Clearly $\gamma'(t)$ exists for all but finitely many points. For such points we have that $(p\circ \gamma)'=y_{1}e_{1}\pm \lambda e_{i}$,
\begin{equation}\label{lipcase1}
|(p\circ \gamma)'(t)|=(y_{1}^2+\lambda^2)^{1/2}=y_{1}\left(1+\frac{16P^{4}y_{i1}^{2}}{y_{1}^4}\right)^{1/2}
\end{equation}
and
\begin{equation}\label{derivcase1}
|(p\circ \gamma)'(t)-p(y)| = |\lambda|=\frac{4P^{2}|y_{i1}|}{y_{1}}.
\end{equation}
It follows from the definition of the vector fields that
\[\gamma((q+1)/P)=\gamma(q/P)VW,\]
where the non-zero coordinates of $V, W\in \bbG_{r}$ are given by:
\begin{itemize}
\item $V_{1}=y_{1}/2P,\quad V_{i}=\lambda/2P$,
\item $W_{1}=y_{1}/2P,\quad W_{i}=-\lambda/2P$,
\end{itemize}
Notice $VW\in \bbG_{r}$ satisfies $(VW)_{1}=y_{1}/P$, $(VW)_{i1}=\lambda y_{1}/4P^{2}=y_{i1}$, and all other coordinates are zero. It follows that $\gamma((q+1)/P)$ has properties (1)--(4) with $q$ replaced by $q+1$.
\end{case}

\begin{case}[$j\neq 1$]
Let $\lambda=6P\sqrt{|y_{ij}|}$ and $\mu=-6P\ \mathrm{sign}(y_{ij})\sqrt{|y_{ij}|}$. We divide $I_{q}$ into six equal subintervals of length $1/6P$ and denote their interiors by $I_{q}^{l}$. Since $\gamma(q/P)$ is already specified, we may define $\gamma$ on $I_{q}$ by:
\[\gamma'(t)=
\begin{dcases}
y_{1}X_{1}(\gamma(t))+\lambda X_{j}(\gamma(t)), & t\in I_{q}^{1} \\
y_{1}X_{1}(\gamma(t))+\mu X_{i}(\gamma(t)), & t\in I_{q}^{2}\\
y_{1}X_{1}(\gamma(t))-\lambda X_{j}(\gamma(t)),  & t\in I_{q}^{3}\\
y_{1}X_{1}(\gamma(t))-\mu X_{i}(\gamma(t)), & t\in I_{q}^{4}\\
y_{1}X_{1}(\gamma(t))-2\lambda X_{j}(\gamma(t))-2\mu X_{i}(\gamma(t)), & t\in I_{q}^{5}\\
y_{1}X_{1}(\gamma(t))+2\lambda X_{j}(\gamma(t))+2\mu X_{i}(\gamma(t)), & t\in I_{q}^{6}
\end{dcases}\]

Clearly $\gamma'(t)$ exists for all but finitely many points. For such points $(p\circ \gamma)'$ is equal to one of the following:
\[ y_{1}e_{1}\pm \lambda e_{j},\quad y_{1}e_{1}\pm \mu e_{i}, \quad y_{1}e_{1}\pm (2\lambda e_{j}+2\mu e_{i}).\]
Hence, if $\gamma'(t)$ exists,
\begin{align}\label{lipcase2}
|(p\circ \gamma)'(t)|&\leq (y_{1}^2 + 4\lambda^2 + 4\mu^2)^{1/2}\nonumber \\
&=(y_{1}^2 + 288P^2 |y_{ij}|)^{1/2}\nonumber \nonumber \\
& = y_{1}\left(1+\frac{288P^2 |y_{ij}|}{y_{1}^2}\right)^{1/2}
\end{align}
and
\begin{equation}\label{derivcase2}
|(p\circ \gamma)'(t)-p(y)|\leq 2|\lambda|+2|\mu|\leq 24P\sqrt{|y_{ij}|}.
\end{equation}

To compute $\gamma((q+1)/P)$ we notice
\[\gamma((q+1)/P)=\gamma(q/P)V^{1}V^{2}V^{3}V^{4}V^{5}V^{6},\]
where the non-zero coordinates of $V^{l}\in \bbG_{r}$ are given by
\begin{itemize}
\item $V^{1}_{1}=y_{1}/6P,\quad V^{1}_{j}=\lambda/6P$,
\item $V^{2}_{1}=y_{1}/6P,\quad V^{2}_{i}=\mu/6P$,
\item $V^{3}_{1}=y_{1}/6P,\quad V^{3}_{j}=-\lambda/6P$,
\item $V^{4}_{1}=y_{1}/6P,\quad V^{4}_{i}=-\mu/6P$,
\item $V^{5}_{1}=y_{1}/6P,\quad V^{5}_{j}=-2\lambda/6P,\quad V^{5}_{i}=-2\mu/6P$,
\item $V^{6}_{1}=y_{1}/6P,\quad V^{6}_{j}=2\lambda/6P,\quad V^{6}_{i}=2\mu/6P$.
\end{itemize}
We avoid giving all intermediate steps, but computation using the group law shows that $W^{1}=V^{1}V^{2}V^{3}V^{4}$ and $W^{2}=V^{5}V^{6}$ have non-zero coordinates:
\begin{itemize}
\item $W^{1}_{1}=2y_{1}/3P, \quad W^{1}_{j1}=\lambda y_{1}/18P^{2}, \quad W^{1}_{i1}=\mu y_{1}/18P^{2}, \quad W^{1}_{ij}=-\lambda\mu/36P^{2}$,
\item $W^{2}_{1}=y_{1}/3P, \quad W^{2}_{j1}=-\lambda y_{1}/18P^{2}, \quad W^{2}_{i1}=-\mu y_{1}/18P^{2}, \quad W^{2}_{ij}=0$.
\end{itemize}
It follows that $(W^{1}W^{2})_{1}=y_{1}/P$, $(W^{1}W^{2})_{ij}=-\lambda\mu/36P^{2}=y_{ij}$, and all other coordinates are zero. Hence $\gamma((q+1)/P)$ again has properties (1)--(4) with $q$ replaced by $q+1$.
\end{case}

The cases above define a Lipschitz horizontal curve $\gamma$ on $[0,1]$ which is differentiable at all but finitely many points. Clearly $\gamma(0)=0$. Applying (1)--(4) with $q=P$ gives $\gamma(1)=y$, since every coordinate $x_{ij}$ was assigned to some interval $I_{q}$. This gives conclusion (1). Using \eqref{lipcase1} and \eqref{lipcase2} together with Lemma \ref{lipschitzhorizontal} gives conclusion (2). Putting together \eqref{derivcase1} and \eqref{derivcase2} gives conclusion (3).
\end{proof}

The next lemma enables us to choose convenient coordinates in $\bbG_{r}$. The proof is a slight adaption of \cite[page 7]{LS16}. By a \emph{group isometric isomorphism} we simply mean a Carnot group isomorphism which preserves the distance.

\begin{lemma}\label{existF}
Suppose $y\in \bbG_{r}$ with $L=|p(y)|\neq 0$. Then there exists a group isometric isomorphism $F\colon \bbG_{r}\to \bbG_{r}$ such that $F_{1}(y)=L$ and $F_{i}(y)=0$ for $i>1$. Such a map can be chosen of the form $F(x,y)=(A(x),B(y))$, where $A\colon \bbR^{r}\to \bbR^{r}$ is a linear isometry and $B\colon \bbR^{P}\to \bbR^{P}$ is linear.
\end{lemma}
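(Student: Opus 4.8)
The plan is to obtain $F$ as the Carnot group automorphism that lifts a Lie algebra automorphism of $\cg$ whose restriction to the horizontal layer $V_1$ realizes a chosen rotation of $\bbR^r$ sending $p(y)$ to $Le_1$. All the content is in producing that rotation and then checking the lift is an isometry; everything else is bookkeeping with exponential coordinates.

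First I would fix the linear part. Since $L=|p(y)|\neq 0$, the vector $p(y)/L$ is a unit vector in $\bbR^r$, so completing it to an orthonormal basis produces a linear isometry $A\colon\bbR^r\to\bbR^r$ (that is, $A\in O(r)$) with $A(p(y))=Le_1$. Using the basis $X_1,\ldots,X_r$ to identify $V_1$ with $\bbR^r$ — which is an isometry of $(V_1,\omega)$ with Euclidean $\bbR^r$, since $\omega(E)=|p(E)|$ and $p(X_k)=e_k$ — the matrix $A$ determines a map $\Phi\colon\{X_1,\ldots,X_r\}\to\cg$ by $\Phi(X_k)=\sum_{l}A_{lk}X_l$. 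Because the Lie algebra of $\bbG_r$ is free-nilpotent of step $2$, the universal property in Definition \ref{freeliealgebra}(3) extends $\Phi$ uniquely to a Lie algebra homomorphism $\phi\colon\cg\to\cg$. Running the same construction with $A^{-1}$ in place of $A$ and invoking uniqueness shows the two resulting homomorphisms are mutually inverse, so $\phi$ is an automorphism. Moreover $\phi(V_1)=V_1$ by construction, and $\phi(V_2)=\phi([V_1,V_1])=[\phi(V_1),\phi(V_1)]=[V_1,V_1]=V_2$, so $\phi$ preserves the stratification.

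Next I would pass to the group. As recalled in Section \ref{preliminaries}, since $\bbG_r$ is simply connected, $\phi$ lifts to a Lie group automorphism $F\colon\bbG_r\to\bbG_r$ with $dF=\phi$; in the exponential coordinates of the first kind used for $\bbG_r$ this lift is simply the linear map $\phi$ acting on $\bbR^n$. Since $\phi$ respects the splitting $V_1\oplus V_2$ and $X_{ij}$ is a basis of $V_2$, this linear map is block diagonal, $F(x_H,x_V)=(Ax_H,Bx_V)$ for some linear $B\colon\bbR^P\to\bbR^P$, with $A$ the linear isometry above. In particular $F(y)$ has horizontal part $A(p(y))=Le_1$, so $F_1(y)=L$ and $F_i(y)=0$ for $2\le i\le r$, and $F$ has the asserted form. (Alternatively one could bypass the universal property and define $B$ explicitly as the map induced by $A$ on brackets, $B$ playing the role of a second exterior power of $A$, then verify directly that $(x_H,x_V)\mapsto(Ax_H,Bx_V)$ preserves the group law of $\bbG_r$.)

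It remains to show $F$ is an isometry of the CC distance, which is the only place where the sub-Finsler geometry enters. Since $F$ is a group homomorphism with $dF=\phi$, it intertwines left invariant vector fields with their images, $dF_p(E(p))=(\phi E)(F(p))$ for $E\in\cg$; hence if $\gamma$ is horizontal with $\gamma'(t)=E_t(\gamma(t))$, $E_t\in V_1$, then $(F\circ\gamma)'(t)=(\phi E_t)(F(\gamma(t)))$ with $\phi E_t\in V_1$, so $F\circ\gamma$ is again horizontal. Because $p\circ F=A\circ p$ we have $p\circ(F\circ\gamma)=A\circ(p\circ\gamma)$, so $L_{\bbG}(F\circ\gamma)=L_{\bbE}(A\circ(p\circ\gamma))=L_{\bbE}(p\circ\gamma)=L_{\bbG}(\gamma)$, using that $A$ is a Euclidean isometry. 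As $\gamma\mapsto F\circ\gamma$ is a bijection between horizontal curves joining $x$ to $z$ and those joining $F(x)$ to $F(z)$, taking infima yields $d(F(x),F(z))=d(x,z)$, so $F$ is a group isometric isomorphism with the required properties. The main obstacle is really just the careful bookkeeping to ensure $\phi$ acts on $V_1$ exactly by $A$ (and not by $A^{\top}$ or $A^{-1}$), which is handled by fixing the identification $X_k\leftrightarrow e_k$ once and for all; the argument otherwise parallels \cite[page 7]{LS16}.
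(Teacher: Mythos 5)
Your proposal is correct and follows essentially the same route as the paper: choose an inner-product-preserving linear map on $V_1$ sending the horizontal part of $y$ to $LX_1$, extend it to a Lie algebra automorphism via free-nilpotency, lift to a group automorphism by simple connectedness, and deduce the isometry property from the fact that the horizontal part acts by a Euclidean isometry. You simply supply more of the details (invertibility via the universal property, the explicit length-preservation argument for horizontal curves) that the paper's proof leaves implicit.
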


\begin{proof}
Let $E=y_{1}X_{1}+\cdots + y_{r}X_{r}\in V_{1}$. Choose a linear bijection $\Phi \colon V_{1} \to V_{1}$ such that $\Phi(E)=LX_{1}$ which preserves the inner product induced by the basis $X_{1}, \ldots, X_{r}$ of $V_{1}$. Since $\mathbb{G}_{r}$ is free-nilpotent, $\Phi$ extends to an isomorphism of the Lie algebra of $\mathbb{G}_{r}$ (Lemma 14.1.4 \cite{BLU07}). Since Carnot groups are simply connected Lie groups, such an isomorphism lifts to a Lie group isomorphism $F\colon \mathbb{G}_{r}\to \mathbb{G}_{r}$ satisfying $dF|_{V_{1}}=\Phi$. The map $F$ is smooth and $dF$ preserves $V_{1}$ and $V_{2}$, in particular $F$ sends horizontal/smooth curves to horizontal/smooth curves. The definition of $F$ and of $X_{1}, \ldots, X_{r}$ imply that $F$ has the form $F(x,y)=(A(x),B(y))$, where $A\colon \mathbb{R}^{r} \to \mathbb{R}^{r}$ is a linear isometry and $B\colon \mathbb{R}^{n-r}\to \mathbb{R}^{n-r}$ is linear. That $F$ is an isometry follows from the fact that $\Phi$ preserves the inner product of $V_1$.
\end{proof}

\begin{theorem}\label{thmdifferentiabilityofdistance}
The CC distance in $\bbG_{r}$ is differentiable in horizontal directions.
\end{theorem}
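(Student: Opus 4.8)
The plan is to prove that $d$ is Pansu differentiable at every $u=\exp(E)$ with $E\in V_1\setminus\{0\}$, and I would first reduce to a normalised point. Since $\exp(E)=\delta_{\omega(E)}\big(\exp(E/\omega(E))\big)$, since $d(\delta_\lambda\,\cdot\,)=\lambda\,d(\,\cdot\,)$, and since $\delta_\lambda$ is a Carnot group automorphism, Pansu differentiability of $d$ at $u$ is equivalent to Pansu differentiability of $d$ at $\exp(E/\omega(E))$; so we may assume $\omega(E)=1$. Applying Lemma~\ref{existF} to $u=\exp(E)$ (here $|p(E)|=\omega(E)=1$) produces a group isometric isomorphism $F$ with $F(u)=\exp(X_1)=:u_0$. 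As $F$ fixes $0$, preserves $d$, and is a Carnot automorphism, $d$ is Pansu differentiable at $u$ if and only if it is Pansu differentiable at $u_0$. By Remark~\ref{derivativeformula} (equivalently Lemma~\ref{lemmascalarlip} with $E=X_1$) the only candidate differential at $u_0$ is the $\bbG$-linear map $L(z)=z_1$; and since $d(u_0,u_0z)=d(z)$, the statement at $u_0$ reduces to $\lim_{z\to 0}\big(d(u_0z)-1-z_1\big)/d(z)=0$.

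The lower estimate is immediate: Lemma~\ref{distanceinequality} applied to the point $u_0$ (for which $p(u_0)=e_1$ and $d(u_0)=1$) gives $d(u_0z)\geq 1+z_1$ for every $z$, so the quotient above is nonnegative and it remains to prove the matching upper bound $d(u_0z)\leq 1+z_1+o(d(z))$. For this, fix a small $z$ and set $w:=u_0z$. The group law of $\bbG_r$ gives $w_1=1+z_1$, $w_i=z_i$ for $i>1$, $w_{i1}=z_{i1}-\tfrac12 z_i$ for $i>1$, and $w_{ij}=z_{ij}$ for $i>j>1$; hence $|p(w)|=(1+z_1)+O(d(z)^2)$, the quantities $w_2,\dots,w_r$ and $w_{i1}$ are $O(d(z))$, while $w_{ij}=O(d(z)^2)$ for $i,j\geq 2$ and $z_V=O(d(z)^2)$. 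Then I would apply Lemma~\ref{existF} to $w$ to get a group isometric isomorphism $G$ with $p(G(w))=|p(w)|\,e_1$; crucially, since $p(w)/|p(w)|\to e_1$ as $z\to 0$, the underlying linear isometry (hence also its action on $V_2$) can be chosen $O(d(z))$-close to the identity. Writing $\tilde w:=G(w)$, so $d(\tilde w)=d(w)$, we get $\tilde w_1=|p(w)|=(1+z_1)+O(d(z)^2)$, $\tilde w_i=0$ for $i>1$, and, because the second-layer part of $\tilde w$ is obtained from that of $w$ by a linear map $O(d(z))$-close to the identity applied to a vector of norm $O(d(z))$, also $\max_{i>1}|\tilde w_{i1}|=O(d(z))$ and $\max_{i>j>1}|\tilde w_{ij}|=O(d(z)^2)$.

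Finally I would feed $\tilde w$ into Lemma~\ref{curve} (legitimate, as $\tilde w_1>0$ and $\tilde w_i=0$ for $i>1$ when $z$ is small): with $\max_{i>1}|\tilde w_{i1}|=O(d(z))$, $\max_{i>j>1}|\tilde w_{ij}|=O(d(z)^2)$ and $\tilde w_1=1+O(d(z))$, the lemma provides a horizontal curve $\gamma\colon[0,1]\to\bbG_r$ with $\gamma(0)=0$, $\gamma(1)=\tilde w$ and $\mathrm{Lip}_{\bbG}(\gamma)\leq\tilde w_1\big(1+O(d(z)^2)\big)=1+z_1+O(d(z)^2)$. Therefore $d(u_0z)=d(\tilde w)\leq L_{\bbG}(\gamma)\leq\mathrm{Lip}_{\bbG}(\gamma)\leq 1+z_1+O(d(z)^2)$, and combined with the lower bound, $0\leq d(u_0z)-1-z_1=O(d(z)^2)=o(d(z))$, which gives the limit and hence the theorem.

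The step I expect to require the most care is the estimate $\max_{i>j>1}|\tilde w_{ij}|=O(d(z)^2)$, which is where the geometry of $\bbG_r$ enters. The vertical coordinates $w_{i1}$ of $w=u_0z$ are only of order $d(z)$ (because of the $-\tfrac12 z_i$ term in the group law), so a careless choice of the rotation aligning $p(w)$ with $e_1$ would feed these into $\max_{i>j>1}|\tilde w_{ij}|$; since the bound of Lemma~\ref{curve} depends on $\max_{i>j>1}|\tilde w_{ij}|$ only through a term of order $\max_{i>j>1}|\tilde w_{ij}|/\tilde w_1$, this would leave an $O(d(z))$ error and break the argument. Choosing the alignment automorphism $O(d(z))$-close to the identity keeps those contributions at order $d(z)^2$; meanwhile the coordinates $w_{i1}$ themselves are harmless even at size $d(z)$, because the bound of Lemma~\ref{curve} depends on $\max_{i>1}|\tilde w_{i1}|$ only through a term of order $(\max_{i>1}|\tilde w_{i1}|)^{2}/\tilde w_1^{3}$. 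A routine point to verify is that the alignment isometry and its inverse can indeed be taken close to the identity as $z\to 0$, which follows from the construction in the proof of Lemma~\ref{existF}; all $O(\cdot)$'s are as $z\to 0$ with constants depending only on $\bbG_r$.
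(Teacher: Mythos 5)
Your proof is correct and follows essentially the same route as the paper: the lower bound from Lemma \ref{distanceinequality}, then the upper bound by rotating coordinates with Lemma \ref{existF} and joining $0$ to the rotated point by the explicit curve of Lemma \ref{curve}, the key point being that the coordinates of size $O(d(z))$ enter the Lipschitz bound only quadratically while $\max_{i>j>1}|\tilde w_{ij}|=O(d(z)^2)$. The one divergence is how that last estimate is obtained: the paper derives the exact identity $F_{ij}(uz)=F_{ij}(z)$ for $i>j>1$ directly from the group law, which works for \emph{any} isometry supplied by Lemma \ref{existF}, whereas you normalize $u=\exp(X_1)$ and invoke the additional fact --- true and verifiable from the construction, but not stated in Lemma \ref{existF} --- that the aligning isometry (and hence its induced action on $V_2$) can be chosen $O(d(z))$-close to the identity, so your version carries one extra verification that the paper's avoids.
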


\begin{proof}
Let $u=\exp E$ for some $E\in V_{1}$. Equivalently, $u=(u_{h},0)$ for some choice $u_{h}\in \bbR^{r}\setminus \{0\}$. We may assume $d(u)=1$. By Lemma \ref{distanceinequality}, it suffices to show that $d(uz)\leq d(u)+\left\langle p(z), p(u) \right\rangle+ o(d(z))$ as $z\to 0$. Assume $d(z)\leq 1/2$ and let $L=|p(uz)|\neq 0$. Using $|p(u)|=1$ and $|p(z)|\leq d(z)\leq 1/2$ we see $1/2\leq L\leq 2$. By Lemma \ref{existF}, there exists a group isometric isomorphism $F\colon \bbG_{r}\to \bbG_{r}$ such that $F_{1}(uz)=L$ and $F_{i}(uz)=0$ for $i>1$. Write $F=(p\circ F,v)$ for some function $v\colon \bbG_{r}\to \bbR^{P}$. Then
\[(L,0,\ldots,0,v(uz))=F(uz)=F(u)F(z).\] 
The group law is additive in the first $r$ coordinates, so $F_{1}(u)+F_{1}(z)=L$ while $F_{i}(u)+F_{i}(z)=0$ for $i>1$. Notice the form of the map $F$ in Lemma \ref{existF} implies $F_{ij}(u)=0$ for all $i, j$. Using the definition of the group law gives, for all $i>1$:
\begin{align}\label{firstvert}
F_{i1}(uz)&=F_{i1}(u)+F_{i1}(z)+\frac{1}{2}(F_{i}(u)F_{1}(z)-F_{1}(u)F_{i}(z))\\
&=F_{i1}(z)+\frac{1}{2}(-F_{i}(z)F_{1}(z)-(L-F_{1}(z))F_{i}(z))\\ \nonumber
&=F_{i1}(z)-\frac{1}{2}LF_{i}(z).\nonumber
\end{align}
Similarly, for all $i>j>1$:
\begin{align}\label{secovert}
F_{ij}(uz)= F_{ij}(z).
\end{align}
Using \eqref{firstvert}, we estimate as follows
\begin{align}\label{crt}
A:=\max_{2\leq i\leq r} |F_{i1}(uz)|&\leq |v(z)|+\frac{L}{2} |p(F(z))|\\
\nonumber
&\leq C d(F(z))^2+\frac{L}{2}d(F(z))\\
\nonumber
&= C d(z)^2+\frac{L}{2}d(z)\\
\nonumber
&\leq C d(z),
\end{align}
where in the equality above we used that $F$ is an isometry. By \eqref{secovert}, we also have
\begin{align}\label{crt2}
B:=\max_{i>j>1} |F_{ij}(uz)|\leq |v(F(z))|\leq C d(z)^2.
\end{align}
Lemma \ref{curve} yields
\begin{align}\label{stm}
d(uz)=d(F(uz))&\leq L\max \left\{ \left(1+\frac{16P^4A^2}{L^4}\right)^{1/2},\, \left(1+\frac{288P^2B}{L^2}\right)^{1/2} \right\}.
\end{align}
Using \eqref{crt}, \eqref{crt2} and $1/2<L<2$ we get
\begin{align*}
&L\left(1+\frac{16P^4A^2}{L^4}\right)^{1/2}\leq L\left(1+256C^2 P^4 d(z)^2\right)^{1/2}\leq L+o(d(z)), \\
&L\left(1+\frac{288P^2B}{L^2}\right)^{1/2}\leq L\left(1+1200CP^4 d(z)^2\right)^{1/2}\leq L+o(d(z)).
\end{align*} 
Therefore \eqref{stm} gives $d(uz)\leq  L+o(d(z))$. 
To conclude the proof of (2) we show that $L\leq 1+\langle p(z),p(u)\rangle + o(d(z))$. We estimate as follows:
\begin{align*}
L&=|p(u)+p(z)|\\
&=|p(u)(1+\langle p(z),p(u)\rangle) + (p(z)-\langle p(z), p(u)\rangle p(u)) |\\
&=\left( (1+\langle p(z),p(u) \rangle)^{2} + |p(z)-\langle p(z),p(u) \rangle p(u)|^{2} \right)^{\frac{1}{2}}\\
&\leq \left( (1+\langle p(z),p(u) \rangle)^{2} + 4d(z)^{2} \right)^{\frac{1}{2}}\\
&=(1+\langle p(z),p(u) \rangle)\left(1+\frac{4d(z)^{2}}{(1+\langle p(z),p(u) \rangle)^{2}}\right)^{\frac{1}{2}}\\
\end{align*}
The claim then follows since $d(z)/(1+\langle p(z), p(u) \rangle)\leq 2d(z)\to 0$ as $d(z)\to 0$.
\end{proof}

\section{Maximality does not imply differentiability in the Engel group}\label{engelsection}
In this section we show that existence of a maximal directional derivative does not imply differentiability in the Engel group, a Carnot group of step 3 (Theorem \ref{Conterex:Engel}). A representation of the Engel group in $\mathbb R^{4}$ with respect to exponential coordinates of the second kind is the following. Let  $x_1, x_2, x_3, x_4$ be the coordinates of  a point $x\in \mathbb R^4$. Then the Engel group law is given by
$$x\cdot y=
\Big(x_1+y_1,\, x_2+y_2,\,
x_3+y_3 -x_1y_2,\,
x_4+y_4 - x_1 y_3 +\frac{1}{2} x_1^2y_2
\Big).$$

The horizontal bundle is spanned by the two left-invariant vector fields:     
  $$X_{1} = \partial_{1}\qquad \text{ and}\qquad  
X_{2} =\partial_{2} -x_{1}\partial_{3} + \frac{x_{1}^{2}}{2}\partial_{4} .
$$
Other elements completing these to a  basis of the Lie algebra are the following: 
$$
X_{3}  =  \partial_3-  x_{1}\partial_{4} \qquad \text{ and}\qquad 
X_{4}=\partial_4.$$
With respect to this basis, the non-trivial bracket relations are:
$$X_3=[X_2,X_1] \qquad \text{ and}\qquad 
X_4=[X_3,X_1].$$

The vector $X_2$ is called the {\em abnormal direction}, since the curves of the form $t\mapsto p\exp(tX_2)$, $p\in \mathbb R^4$, are the abnormal curves (i.e. are singular points of the endpoint map). Because of the presence of these abnormal curves, the CC distance (defined using the basis $X_{1}, X_{2}$ of the horizontal layer) behaves irregularly. Indeed, the distance function from the origin is not differentiable at any of the points $\exp(\mathbb R X_2)$. This result is probably not new to experts, but for completeness we will provide a complete proof later in this section.

\begin{lemma} \label{lemma:Engel}
In the Engel group, with coordinates as above, there exists $C>0$ such that 
\begin{equation}\label{lemma:Engel:eq}
d( ( 0,0,0,0) , (0,1,0,\varepsilon) ) \geq 1+ C |\varepsilon|^{1/3} \qquad \text{ for } \varepsilon \in (-1,0).
\end{equation}
\end{lemma}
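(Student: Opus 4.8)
The plan is to prove the lower bound by analyzing an arbitrary horizontal curve $\gamma$ joining $(0,0,0,0)$ to $(0,1,0,\varepsilon)$ and showing that if its length is close to $1$, then it must be close to the abnormal geodesic $t\mapsto (0,t,0,0)$ in the first three coordinates, which forces a quantitative deficiency in the fourth coordinate proportional to $|\varepsilon|$; balancing this against the length excess gives the $|\varepsilon|^{1/3}$ gain. Concretely, write $\gamma'(t) = u_1(t)X_1(\gamma(t)) + u_2(t)X_2(\gamma(t))$ with $\gamma$ parametrized on $[0,T]$ by arclength of the projection, so $u_1^2+u_2^2 = 1$ a.e.\ and $T = L_{\bbG}(\gamma) = d$. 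From the vector field formulas, $\gamma_1(t) = \int_0^t u_1$, $\gamma_2(t) = \int_0^t u_2$, and crucially the equations $\gamma_3' = -\gamma_1 u_2$ and $\gamma_4' = -\gamma_1\gamma_3' \cdot(\text{correction}) + \tfrac12\gamma_1^2 u_2$ coming from $X_2$'s action on coordinates $3,4$ — I would compute the precise ODEs for $\gamma_3,\gamma_4$ from the group law / vector fields and integrate.

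The key steps, in order: (i) Set up the arclength parametrization and record the coordinate ODEs. (ii) Using $\gamma_1(T) = 0$, $\gamma_2(T) = 1$, and $T = d$ close to $1$, show $\int_0^T |u_1| $ and the oscillation of $\gamma_1$ are small — precisely, since $|\gamma_2(T)| = |\int u_2| \le \int|u_2| \le (\int u_2^2)^{1/2} T^{1/2}$ and $\int u_1^2 + \int u_2^2 = T$, one gets $\int_0^T u_1^2 \le T - 1/T =: \eta$, so $\|\gamma_1\|_\infty \le \int|u_1| \le \sqrt{\eta T}$, controlled by the length excess $\eta \approx 2(d-1)$. (iii) Estimate $\gamma_3$: $|\gamma_3(t)| = |\int_0^t \gamma_1 u_2| \le \|\gamma_1\|_\infty \cdot T$, again small. (iv) Estimate the defect in $\gamma_4$: from $\gamma_4' $ expressed via $\gamma_1, \gamma_3, u_2$, bound $|\varepsilon| = |\gamma_4(T)| \le \|\gamma_1\|_\infty\|\gamma_3\|_\infty T + \tfrac12\|\gamma_1\|_\infty^2 T \lesssim \|\gamma_1\|_\infty^2 \lesssim \eta$, hence $|\varepsilon| \le C'(d-1)$, which rearranges to $d \ge 1 + C|\varepsilon|$. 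Actually this naive bound gives the stronger-looking $|\varepsilon|$, not $|\varepsilon|^{1/3}$ — so I expect the honest estimate to be subtler: the term $\tfrac12\int \gamma_1^2 u_2$ can be made small only like $\|\gamma_1\|_\infty^2$, but $\|\gamma_1\|_\infty$ relates to $\eta^{1/2}$, giving $|\varepsilon| \lesssim \eta$, i.e.\ linear; the cube root must come from the fact that one cannot freely choose $\gamma_1$ small everywhere while keeping $\gamma_3$ AND the length under control simultaneously — there is a competition because making $\gamma_4$ move requires $\gamma_1$ to be non-negligible over a definite time, and the Engel scaling $\delta_\lambda$ weights $x_4$ with homogeneity $3$. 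I would instead run a scaling/blow-up or a direct optimization: on the sub-interval where $\gamma_1 \ne 0$, the cheapest way to produce $\gamma_4$-displacement $\varepsilon$ while returning $\gamma_1,\gamma_2,\gamma_3$ to near their forced values costs length $\gtrsim |\varepsilon|^{1/3}$ by homogeneity, since the "cost functional" is invariant under $\delta_\lambda$ and $x_4$ scales like $\lambda^3$.

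The main obstacle is precisely getting the correct power $1/3$ rather than $1$: this requires not just crude $L^\infty$ bounds but a genuine lower bound on the length needed to achieve a prescribed fourth-coordinate increment, which is where the step-$3$ structure (abnormality of $X_2$) enters. I would handle it by a dilation argument: apply $\delta_{\lambda}$ with $\lambda = |\varepsilon|^{1/3}$ so that the target $(0,1,0,\varepsilon)$ rescales to roughly $(0, \lambda^{-1}, 0, \pm 1)$-type data, or dually rescale so the $x_4$-coordinate becomes order one; combined with the fact that $d\big((0,0,0,0),(0,1,0,0)\big) = 1$ exactly (the abnormal geodesic is length-minimizing here, which I would justify via the projection bound $d(x) \ge |p(x)|$ giving $d \ge |\gamma_2(T)| = 1$, with equality iff $u_2 \equiv 1$, $u_1 \equiv 0$), a first-order perturbation analysis near this unique minimizer shows the second-order-in-length penalty controls $|\gamma_1|^2$, while $\gamma_4$-displacement is cubic in the relevant scale, yielding $(d-1) \gtrsim |\varepsilon|^{2/3}$... which is still not $1/3$. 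Given this discrepancy I would ultimately trust the careful computation: track $\gamma_3$ and $\gamma_4$ together, note $\gamma_4(T) = -\int_0^T \gamma_1 \gamma_3' + \tfrac12\int_0^T\gamma_1^2 u_2 = \int_0^T \gamma_1' \gamma_3 - \gamma_1(T)\gamma_3(T) + \tfrac12\int\gamma_1^2 u_2 = \int_0^T u_1\gamma_3 + \tfrac12\int\gamma_1^2 u_2$, bound $|\varepsilon| \le \|\gamma_3\|_\infty\int|u_1| + \tfrac12\|\gamma_1\|_\infty^2 T \le \|\gamma_1\|_\infty T \cdot \sqrt{\eta T} + \tfrac12\|\gamma_1\|_\infty^2 T$, and with $\|\gamma_1\|_\infty \le \sqrt{\eta T}$ conclude $|\varepsilon| \le C\eta^{3/2}$, i.e.\ $|\varepsilon|^{2/3} \le C\eta = C(T-1/T) \le 2C(d-1)$ for $d\le 2$, which finally gives $d \ge 1 + c|\varepsilon|^{2/3} \ge 1 + c|\varepsilon|^{1/3}$ for $|\varepsilon| \le 1$. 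So the cube-root is a (possibly non-sharp) consequence of the $3/2$-power estimate, and the real content — and the step I expect to be most delicate — is the chain $|\varepsilon| \lesssim \|\gamma_1\|_\infty^{3} $-type bound obtained by the integration-by-parts above together with $\|\gamma_1\|_\infty^2 \lesssim (d-1)$.
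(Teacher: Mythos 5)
There is a genuine gap, and it is exactly at the step you yourself flagged as delicate: your estimates do not produce the exponent $1/3$, and the final chain of inequalities that claims to is erroneous in two places. First, an arithmetic slip: with $\|\gamma_3\|_\infty\le\|\gamma_1\|_\infty\int|u_2|\le\|\gamma_1\|_\infty T$ and $\|\gamma_1\|_\infty,\ \int|u_1|\le\sqrt{\eta T}$, your bound $|\varepsilon|\le\|\gamma_3\|_\infty\int|u_1|+\tfrac12\|\gamma_1\|_\infty^2T$ evaluates to $\eta T^2+\tfrac12\eta T^2=O(\eta)$, not $O(\eta^{3/2})$; so you only recover the linear bound $d\ge1+c|\varepsilon|$. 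Second, and fatally, the concluding inequality ``$1+c|\varepsilon|^{2/3}\ge1+c|\varepsilon|^{1/3}$ for $|\varepsilon|\le1$'' is backwards: for $|\varepsilon|\le1$ one has $|\varepsilon|\le|\varepsilon|^{2/3}\le|\varepsilon|^{1/3}$, so the lemma's bound $d\ge1+C|\varepsilon|^{1/3}$ is strictly \emph{stronger} than $d\ge1+c|\varepsilon|^{2/3}$ or $d\ge1+c|\varepsilon|$, and neither of the bounds you actually obtain implies it. There is also a structural reason your argument cannot be repaired by sharper Cauchy--Schwarz: you never use $\varepsilon<0$. Since $\gamma_4'=\tfrac12\gamma_1^2u_2$ (your ODE for $\gamma_4$ carries a spurious $-\gamma_1\gamma_3'$ term; from $X_2=\partial_2-x_1\partial_3+\tfrac{x_1^2}{2}\partial_4$ the fourth coordinate satisfies exactly $\gamma_4'=\tfrac12\gamma_1^2u_2$), the curve $\gamma_1(t)=A\sin(2\pi t)$, $\gamma_2(t)=t$ on $[0,1]$ reaches $(0,1,0,A^2/4)$ with length $\le1+\pi^2A^2$, so $d(0,(0,1,0,\varepsilon))\le1+4\pi^2\varepsilon$ for small $\varepsilon>0$ and the claimed lower bound is \emph{false} for positive $\varepsilon$. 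Any sign-blind upper bound $|\varepsilon|\lesssim(d-1)^3$ of the kind you would need therefore cannot hold. For $\varepsilon<0$ one must exploit that decreasing $\gamma_4$ forces $u_2<0$ on a set where $\gamma_1\ne0$ (backtracking, which costs length linearly), while the constraints $\gamma_3(T)=0$ and $\gamma_4'\ge0$ wherever $u_2>0$ prevent you from keeping $\gamma_1$ large cheaply; balancing these three effects is what produces the cubic trade-off $|\varepsilon|\lesssim(d-1)^3$, and none of your displayed inequalities captures it.

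The paper avoids this delicate optimization entirely: it exhibits a $1$-Lipschitz submersion $F(x_1,x_2,x_3,x_4)=(x_2,x_1,x_4)$ from the Engel group onto the flat Martinet sub-Riemannian structure on $\bbR^3$, and then quotes the precise asymptotic expansion of the Martinet sphere near the abnormal geodesic (Proposition 4.15 of Agrachev--Bonnard--Chyba--Kupka), namely $z(x)=\tfrac{r^3}{6}\bigl(\tfrac{x+r}{2r}\bigr)^3+O\bigl(\bigl(\tfrac{x+r}{2r}\bigr)^3e^{-4r/(x+r)}\bigr)$, which encodes exactly the third-order tangency of the sphere with the abnormal direction; the bound $\zeta\le C(r-1)^3$ and hence $r\ge1+C\zeta^{1/3}$ drops out immediately. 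If you want a self-contained proof along your lines, the missing ingredient is a quantitative version of that third-order tangency; your current $L^\infty$/Cauchy--Schwarz estimates stop at second order at best.
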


The above bound will be enough to show that a maximal directional derivative does not imply differentiability in the Engel group. Nonetheless we observe that \eqref{lemma:Engel:eq} is actually sharp, i.e. we have
$$|d( ( 0,0,0,0) , (0,1,0,\varepsilon) ) -1 |\simeq |\varepsilon|^{1/3} \qquad \text{ as } \varepsilon \to 0^-.$$
Indeed, by triangle inequality and left invariance, for all $\varepsilon\in \bbR$ we have:
\begin{align*}
d( ( 0,0,0,0) , (0,1,0,\varepsilon) ) &\leq d( ( 0,0,0,0) , (0,1,0,0) ) +d( ( 0,1,0,0) , (0,1,0,\varepsilon) )\\
&=1+ d( ( 0,0,0,0) , (0,0,0,\varepsilon) ) \\
&= 1+ C' |\varepsilon|^{1/3}.
\end{align*}

Recall that the CC distance in a Carnot group has Lipschitz constant $1$.

\begin{theorem}\label{Conterex:Engel}
Let $\bbG$ be the Engel group. Then the CC distance $d\colon \bbG\to \bbR$ from the origin is not differentiable at $\bar p = (0,1,0,0)=\exp(X_2)$, but still $X_2 d (\bar p)=1$.
\end{theorem}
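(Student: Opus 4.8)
\textbf{Proof plan for Theorem \ref{Conterex:Engel}.}

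The plan is to split the statement into two independent assertions: (a) the directional derivative $X_2 d(\bar p)$ exists and equals $1$, and (b) $d$ is not Pansu differentiable at $\bar p$. For (a) I would argue exactly as in Proposition \ref{Ed=1}: since $\omega(X_2)=1$ we have $d(\exp(X_2))=1$, and using that $\exp(sX_2)\exp(tX_2)=\exp((s+t)X_2)$ together with the dilation homogeneity $d(\exp(\lambda X_2))=\lambda d(\exp(X_2))$ for $\lambda>0$, the difference quotient $\big(d(\bar p\exp(tX_2))-d(\bar p)\big)/t = \big((1+t)-1\big)/t = 1$ for all small $t$ (and the same holds for $t<0$, using $d(\exp((1+t)X_2)) = (1+t)d(\exp(X_2))$ since $1+t>0$). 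Hence $X_2 d(\bar p)=1$. This is the easy half and reuses the computation already present in the excerpt.

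For (b), the strategy is to show that no candidate $\bbG$-linear map $L$ can serve as the Pansu differential. If $d$ were differentiable at $\bar p = \exp(X_2)$ with differential $L$, then, repeating the reasoning of Remark \ref{derivativeformula} (which applies to any $u=\exp(E)$, $E\in V_1$), $L$ would be forced to be $z\mapsto \langle p(z),p(\bar p)\rangle$; here $p(\bar p)=(0,1)$ in the horizontal coordinates $(x_1,x_2)$, so $L(z)=z_2$. The lower bound $d(\bar p,(0,1,0,\varepsilon)) \ge 1+C|\varepsilon|^{1/3}$ from Lemma \ref{lemma:Engel} is designed to contradict this. Indeed, take the increments $w_\varepsilon = \bar p^{-1}\cdot(0,1,0,\varepsilon)$ for $\varepsilon\in(-1,0)$, compute using the explicit Engel group law that $w_\varepsilon$ has vanishing first two coordinates, so $L(w_\varepsilon)=0$; then Pansu differentiability would give
\[
d\big((0,1,0,\varepsilon)\big) = d(\bar p\, w_\varepsilon) = d(\bar p) + L(w_\varepsilon) + o(d(w_\varepsilon)) = 1 + o(d(w_\varepsilon)).
\]
But $d(w_\varepsilon)\to 0$ as $\varepsilon\to 0^-$ with $d(w_\varepsilon)\le C'|\varepsilon|^{1/3}$ (from Proposition \ref{euclideanheisenberg} applied on a compact set, since $w_\varepsilon$ differs from the origin only in the fourth, weight-three coordinate), whereas Lemma \ref{lemma:Engel} forces $d((0,1,0,\varepsilon))-1 \ge C|\varepsilon|^{1/3}$. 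Comparing, $C|\varepsilon|^{1/3} \le o(|\varepsilon|^{1/3})$, which is impossible for small $|\varepsilon|$. This contradiction shows $d$ is not differentiable at $\bar p$.

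The only genuine input beyond bookkeeping is the normalization of the would-be differential — i.e., that differentiability at a point $\exp(E)$ with $E\in V_1$ pins down $L=\langle p(\cdot),p(\exp E)\rangle$ — which is exactly Remark \ref{derivativeformula}, so I would simply cite it. The main obstacle is really Lemma \ref{lemma:Engel} itself (the $|\varepsilon|^{1/3}$ lower bound, exploiting the abnormal direction $X_2$), but that is a separate statement assumed available here; given it, the proof of the theorem is a short contradiction argument. One should be slightly careful that the increments $w_\varepsilon$ stay in a fixed compact neighborhood of the origin so that Proposition \ref{euclideanheisenberg} yields $d(w_\varepsilon)\lesssim |w_\varepsilon - 0|^{1/3} \lesssim |\varepsilon|^{1/3}$; this is immediate since $\varepsilon\in(-1,0)$.
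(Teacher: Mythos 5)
Your proposal is correct and follows essentially the same route as the paper: the directional derivative is computed exactly as in Proposition \ref{Ed=1}, and non-differentiability is obtained by testing the would-be differential on the increment $w_\varepsilon=(0,0,0,\varepsilon)$ (on which any $\bbG$-linear map vanishes) and contradicting the $1+C|\varepsilon|^{1/3}$ lower bound of Lemma \ref{lemma:Engel} against $d(w_\varepsilon)\lesssim|\varepsilon|^{1/3}$. The only cosmetic difference is that you invoke Remark \ref{derivativeformula} to pin down $L$ completely, whereas the paper only uses that $L$ factors through the horizontal coordinates — which is all that is actually needed.
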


\begin{proof}
Suppose $d$ is differentiable at $\bar{p}$ with differential $L\colon \bbG \to \bbR$. Then $L$ must have the form $L(h)=\left( X_{1}d(\bar{p}),X_{2}d(\bar{p})\right)\cdot \left( h_{1}, h_{2}\right)$. In particular, $L((0,0,0,\varepsilon))=0$ for any $\varepsilon \in \bbR$. Using Lemma \ref{lemma:Engel}, for $\varepsilon \in (-1,0)$:
\begin{align*}
d(\bar{p}\cdot (0,0,0,\varepsilon))-d(\bar{p})-L((0,0,0,\varepsilon)) &= d((0,1,0,\varepsilon))-1\\
&\geq C|\varepsilon|^{1/3}\\
&\geq Cd((0,0,0,\varepsilon)),
\end{align*}
where the constant $C$ may change from line to line. Hence $d$ is not differentiable at $\bar{p}$. Nonetheless, we have:
\[X_2 d (\bar p)  = \lim_{t \to 0} \frac{d((0,1+t,0,0))-d((0,1,0,0))}{t} = 1.\]

\end{proof}

\subsection{The Martinet distribution and a proof of Lemma ~\ref{lemma:Engel}}
Lemma ~\ref{lemma:Engel} is probably known to many experts. It can be obtained from the recent work of Sachkov \cite{AS15}.
Nonetheless, to obtain a proof with minimal effort we use the sub-Riemannian geometry of the Martinet distribution, which is more studied \cite{ABCK97}.

By the Martinet geometry we mean the sub-Riemannian geometry on $\mathbb R^3$, with coordinates $x,y,z$, for which a horizontal orthonormal frame is given by 
$$X = \partial_{x}+ \frac{y^{2}}{2}\partial_{z}\qquad \text{and}\qquad Y  =\partial_{y} .$$
This geometry is called the `flat case' in \cite{ABCK97}. The CC distance is defined using horizontal curves, just like in the case of Carnot groups. From \cite{ABCK97}, we have a clear picture of the sub-Riemannian sphere. 
 In particular, we know that the sphere makes a partial cusp, in the sense that one of the geodesics (the abnormal curve) is tangent to the sphere. Moreover, the tangency is of order 3. 
 
 In fact, from Proposition 4.15 in \cite{ABCK97} we have a quite precise bound. Namely, we know that the intersection of the  half plane  $\{ y=0, z\geq0 \}$ with the sphere $S(0, r)$, $r>0$,   is a curve    whose  graph    is given by
\begin{equation}\label{formula:Agrachev}
z=z(x)= \dfrac{r^3}{6} \left(\dfrac{x+r}{2r} \right)^3 +G \left(\dfrac{x+r}{2r} \right),
\qquad \text{as } x\to -r^+,
\end{equation}
where $G$ is  function of the form $G(t) = -4r^3 t^3 e^{-2/t}  +  o( t^3 e^{-2/t})$. This is an expansion at the point $(-r,0,0) \in S(0, r)$ in the half plane, see Figure 3 in \cite{ABCK97}.

The link between the  Engel geometry  and the Martinet geometry is that the latter is a metric submersion of the first one. More precisely, there exists a map $F:\mathbb R^4\to \mathbb R^3$ for which 
$F_* X_1= Y $ and $F_* X_2= X$. Therefore, in particular, the map $F$ is 
1-Lipschitz. 
In the coordinates that we chose the map $F$ is given by $F(x_1 , x_2 ,x_3, x_4)= ( x_2 ,x_1 ,x_4)$, i.e., we forget the third coordinate and we swap the first two coordinates (the reason for this second choice is to fit with the coordinate system from \cite{ABCK97}).
For example, here is the verification of $F_* X_2= X$, i.e. that, $dF (X_2) = X\circ F $:
\[dF_x X_2(x)=\left[ \begin{matrix}
0&1&0&0\\
1&0&0&0\\
0&0&0&1
\end{matrix}\right]\left[ \begin{matrix}
0\\
1\\
-x_1\\
x_1^2/2
\end{matrix}\right]=
\left[ \begin{matrix}
1\\
0\\ 
(F(x)_2)^2/2
\end{matrix}\right]= X ( F(x)). 
\]

\begin{proof}[Proof of Lemma \ref{lemma:Engel}]
We now use \eqref{formula:Agrachev}.
Let $p:=(-1,0,\zeta)$ with $\zeta$ positive and small. Let $r:= d(0,p)$. As $\zeta \to 0$ we have $r\to 1$, so we may assume that $r\in(1/2, 2)$.
In the expansion \eqref{formula:Agrachev}, we are considering $ x\to -r^+$, hence we assume $x+r\geq 0$, thus the argument of the
 the function   $G$ in \eqref{formula:Agrachev} is non-negative.
 Moreover, for small $t\geq 0$ we can bound
  $|G(t)|\leq C t^3$ for  some $C$. In the rest of the argument we shall call $C$ the generic constant that may change from line to line.
  Therefore, for  $ x\to -r^+$, for some $C$, we bound
$$0\leq z(x)\leq 
C\left( {x+r}  \right)^3 .$$
Hence, taking the values $x=-1$ and $z=\zeta>0$, we get
$$\zeta
 \leq C  (r-1)^3.
 $$
We then conclude that 
$$d(0,p) = r \geq 1+ C \sqrt[3]{\zeta}.$$
Notice that $G(0,-1,0, \zeta)= (-1,0,\zeta)=p$. Hence, since $G$ is 1-Lipschitz
$$1+ C \sqrt[3]{\zeta}
\leq d (  0  ,  (-1,0,\zeta))
\leq d( 0 , (0,-1,0, \zeta)).$$
By left-invariance of the distance in the Engel group we conclude that, for $\zeta$ positive and small, we have
 $$d( 0 , (0,1,0, -\zeta)) = d( 0 , (0,-1,0, \zeta)) \geq  1+ C \sqrt[3]{\zeta}.  $$
Setting $\varepsilon=-\zeta$ we have the conclusion.
\end{proof}

\section{A universal differentiability set in free Carnot groups of step 2}\label{sectionUDS}

In this section we extend the construction of a measure zero UDS in \cite{PS16} to free Carnot groups of step 2. We avoid repeating much of the work of \cite{PS16}, but show how to generalize the necessary geometric lemmas which rely on the structure of the individual Carnot group (Lemma \ref{closedirectioncloseposition} and Lemma \ref{curveforalmostmax}). After defining a Hausdorff dimension one set $N$ containing suitably many curves in Lemma \ref{uds}, the argument that $N$ is a universal differentiability set is almost exactly as in \cite{PS16} (`almost maximality' of directional derivatives implies differentiability in Proposition \ref{almostmaximalityimpliesdifferentiability} and construction of an `almost maximal' directional derivative in Proposition \ref{DoreMaleva}).

The first geometric lemma is a generalization of \cite[Lemma 5.1]{PS16} in the Heisenberg group to step 2 free Carnot groups.

\begin{lemma}\label{closedirectioncloseposition}
Given $S>0$, there is a constant $C_{\mathrm{a}}=C_{\mathrm{angle}}(S)\geq 1$ for which the following is true. Suppose:
\begin{itemize}
\item $g, h \colon I \to \bbG_{r}$ are Lipschitz horizontal curves with $\mathrm{Lip}_{\bbG_{r}}(g), \mathrm{Lip}_{\bbG_{r}}(h)\leq S$,
\item $g(c)=h(c)$ for some $c\in I$,
\item there exists $0\leq A\leq 1$ such that $|(p\circ g)'(t) - (p\circ h)'(t)| \leq A$ for almost every $t\in I$.
\end{itemize}
Then $d(g(t), h(t))\leq C_{\mathrm{a}}\sqrt{A}|t-c|$ for every $t\in I$.
\end{lemma}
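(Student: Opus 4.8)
The plan is to reduce everything to an explicit computation in coordinates, using that in a step 2 group the CC distance is controlled by the Koranyi distance via \eqref{sqrtdistance}. Since left translations in $\bbG_r$ are linear (equation \eqref{linestolines}) and preserve the distance, I would first translate so that the common point is the origin: replace $g$ by $g(c)^{-1}g$ and $h$ by $g(c)^{-1}h$. This does not change $\mathrm{Lip}_{\bbG_r}$, the hypothesis on the horizontal derivatives (the horizontal projection $p$ is unaffected by the linear vertical part of a translation since $p$ of a product is additive), nor the quantity $d(g(t),h(t))=d(g(t)^{-1}h(t))$. So without loss of generality $g(c)=h(c)=0$.

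Next I would estimate the two pieces of $d_K(g(t)^{-1}h(t)) = (|x_H|^4+|x_V|^2)^{1/4}$ where $x = g(t)^{-1}h(t)$. The horizontal part is easy: $x_H = p(h(t))-p(g(t)) = \int_c^t \big((p\circ h)' - (p\circ g)'\big)$, so $|x_H| \le A|t-c|$ by the third hypothesis. For the vertical part, each vertical coordinate $x_{ij}$ of $g(t)^{-1}h(t)$ is, by the group law in Definition \ref{freegroupstep2}, of the form $h_{ij}(t)-g_{ij}(t) + \tfrac12(g_i h_j - h_i g_j)(t)$ evaluated at $t$; using Proposition \ref{horizontalequation} to write $g_{ij}(t) = \tfrac12\int_c^t(g_i g_j' - g_j g_i')$ and similarly for $h$, one gets $x_{ij}$ as an integral $\tfrac12\int_c^t$ of an expression which, after grouping terms, is bounded by a constant times $S\cdot A$ (the point being that it is bilinear in the curves and linear in the \emph{difference} of their horizontal derivatives, with the curve values themselves controlled by $S|t-c|$ via Lemma \ref{lipschitzhorizontal}, and the derivative values by $S$). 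Hence $|x_{ij}| \le C S A |t-c|^2$, so $|x_V| \le C(r) S A |t-c|^2$.

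Combining, $d_K(g(t)^{-1}h(t)) \le \big(A^4|t-c|^4 + C S^2 A^2|t-c|^4\big)^{1/4} \le C(r,S)\sqrt{A}|t-c|$, using $A\le 1$ so $A^4 \le A^2$, and then $d \le C_K d_K$ gives the claim with a constant $C_{\mathrm a} = C_{\mathrm{angle}}(S)$ depending only on $S$ (and the fixed group $\bbG_r$). The main obstacle is the bookkeeping in the vertical estimate: one must carefully expand $\tfrac12(g_i h_j - h_i g_j)(t) + \tfrac12\int_c^t\!\big((h_ih_j'-h_jh_i') - (g_ig_j'-g_jg_i')\big)$ and show every term carries at least one factor of $(p\circ h)' - (p\circ g)'$ (so a factor $A$) — the zeroth-order pieces cancel because $g(c)=h(c)$, so the differences $g_i(t)-h_i(t)$ and the derivative differences are all $O(A|t-c|)$ while the remaining factors are $O(S|t-c|)$ or $O(S)$. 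This is exactly the step where the specific form of the step 2 group law enters, which is why it cannot simply be quoted from \cite{PS16} verbatim but is a direct generalization of \cite[Lemma 5.1]{PS16}.
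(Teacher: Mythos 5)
Your proposal is correct and follows essentially the same route as the paper's proof: translate so the curves meet at the origin, control the CC distance via the Koranyi quasi-distance, bound the horizontal difference by $A|t-c|$ from the derivative hypothesis, and bound each vertical coordinate of $g(t)^{-1}h(t)$ by $CSA|t-c|^{2}$ by expanding both the group-law correction term and the integral formula of Proposition \ref{horizontalequation} so that every summand carries a factor of the horizontal-derivative difference. The final combination using $A\le 1$ to extract $\sqrt{A}$ matches the paper's conclusion, so there is nothing to add.
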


\begin{proof}
Assume $c=0\in I$ and, using left group translations, $g(0)=h(0)=0$. We estimate using the equivalent Koranyi quasi-distance \eqref{Koranyi}:
\begin{align}\label{angleestimate}
d(g(t),h(t))&\leq C_K
d_{K}(g(t),h(t)) \\
\nonumber
&\leq C_K \sum_{i=1}^r \left|h_i(t)-g_i(t)\right|\\
\nonumber
&\qquad +C_K \sum_{1\leq j<i\leq r}\left|h_{ij}(t) - g_{ij}(t) + \frac{1}{2}(g_{i}(t)h_{j}(t)-g_{j}(t)h_{i}(t))\right|^{\frac{1}{2}}
\end{align}
where $C_K\geq 1$ is a constant depending only on the Carnot group.

Let $1\leq j\leq r$. Using $|(p\circ g)'(t) - (p\circ h)'(t)| \leq A$ almost everywhere implies $|h_{j}(t)-g_{j}(t)|\leq A|t|$ for every $t\in I$. Lemma \ref{lipschitzhorizontal} and $\mathrm{Lip}_{\bbG}(g)\leq S$ give the inequality $|g_{j}'|\leq \mathrm{Lip}_{\mathbb{E}}(g_{j})\leq S$. Using also $g(0)=0$ then gives $|g_{j}(t)|\leq S|t|$ for $t\in I$. For $1\leq i\leq r$ and $t\in I$:
\begin{align*}
|g_{i}(t)h_{j}(t)-g_{j}(t)h_{i}(t)| &= |g_{i}(t)(h_{j}(t)-g_{j}(t)) + g_{j}(t)(g_{i}(t)-h_{i}(t))| \\
&\leq S|t| |h_{j}(t)-g_{j}(t)| + S|t| |g_{i}(t)-h_{i}(t)|\\
&\leq ASt^2.
\end{align*} 

We estimate the final term using Lemma \ref{horizontalequation}. For $1\leq j<i\leq r$ and $t>0$ in $I$:
\begin{align*}
&|h_{ij}(t)-g_{ij}(t)| \\
&\leq \frac{1}{2} \int_{0}^{t}|h_{i}(s)||h_{j}'(s)-g_{j}'(s)|+|g_{j}'(s)||h_{i}(s)-g_{i}(s)|\\
&\qquad +\frac{1}{2} \int_{0}^{t}|h_{j}(s)||h_{i}'(s)-g_{i}'(s)|+|g_{i}'(s)||h_{j}(s)-g_{j}(s)|\\
& \leq AS \int_{0}^{t} s \dd s\\
& \leq AS t^2.
\end{align*}
The bound is the same for $t<0$ in $I$.

Combining our estimates of each term in \eqref{angleestimate} and using $0\leq A\leq 1$ gives 
\begin{align*}
d(g(t),h(t))\leq (C_K r+2C_KP\sqrt{S})\sqrt{A} |t| \qquad \mbox{for}\quad t\in I
\end{align*}
where $P=r(r-1)/2$. The conclusion with $C_{a}:=C_K r+2C_KP\sqrt{S}\geq 1$.
\end{proof}

The second geometric lemma is a generalization of \cite[Lemma 4.2]{PS16} in the Heisenberg group to the free Carnot group of step 2. The idea is to perturb a horizontal line to pass through a nearby point of interest. While the formulation may seem unusual, it is one of the key tools to prove that existence of an almost maximal directional derivative implies differentiability. Recall that $x\exp(tE)=x+tE(x)$ for $x\in \bbG_{r}$, $E\in V_{1}$ and $t\in \bbR$. Hence `horizontal lines' of the form $t\mapsto x+tE(x)$ are lines and horizontal curves in $\bbG_{r}$.

\begin{lemma}\label{curveforalmostmax}
Given $\eta>0$, there is $0<\Delta(\eta)<1/4$ and a constant $C_{m}\geq 1$ such that the following holds whenever $0<\Delta<\Delta(\eta)$. Suppose:
\begin{itemize}
\item $x, u\in \bbG_{r}$ with $d(u)\leq 1$,
\item $E\in V_{1}$ with $\omega(E)=1$,
\item $0<r<\Delta$ and $s:=r/\Delta$.
\end{itemize}
Then there is a Lipschitz horizontal curve $g\colon \bbR\to \bbG_{r}$ which is a concatenation of horizontal lines such that:
\begin{enumerate}
\item $g(t)=x+tE(x)$ for $|t|\geq s$,
\item $g(\zeta)=x\delta_{r}(u)$, where $\zeta:=r\langle u,E(0)\rangle$,
\item $\mathrm{Lip}_{\bbG}(g)\leq 1+\eta \Delta$,
\item $g'(t)$ exists and $|(p\circ g)'(t)-p(E)|\leq C_{m}\Delta$ for $t\in \bbR$ outside a finite set.
\end{enumerate}
\end{lemma}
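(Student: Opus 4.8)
The plan is to reduce to the special geometry of Lemma \ref{curve} by a change of coordinates, exactly as in the proof of Theorem \ref{thmdifferentiabilityofdistance}, and then read off the required estimates from the three conclusions of Lemma \ref{curve}. First I would use left translation and the map $F$ from Lemma \ref{existF} to normalize: replacing $x$ by $0$ (left-invariance of the CC distance and the fact that in step~$2$ translations are affine, cf.\ \eqref{linestolines}) and applying a group isometric isomorphism sending $E(0)$ to $e_{1}$, we may assume $x=0$ and $E=X_{1}$. Then $x+tE(x)=\exp(tX_{1})=(t,0,\dots,0)$, and the target point $x\delta_{r}(u)=\delta_{r}(u)$ has first horizontal coordinate $\zeta=r\langle u,E(0)\rangle = r u_{1}$, the other horizontal coordinates $ru_{2},\dots,ru_{r}$, and vertical coordinates $r^{2}u_{ij}$. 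Since $d(u)\le 1$ we have $|u_{1}|,\dots,|u_{r}|\le 1$ and $|u_{ij}|\le C_{K}^{2}$ by \eqref{sqrtdistance}, so the target is within distance $\lesssim r$ of the point $\zeta X_{1}$ on the line.

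The construction of $g$ then goes in two pieces. On $|t|\ge s$ set $g(t)=(t,0,\dots,0)$, the horizontal line through the origin in direction $X_{1}$; this already gives conclusion (1) and, on that range, (3)--(4) trivially. On the interval $[-s,s]$ I would build a correction curve joining $-sX_{1}$ to $sX_{1}$ but passing through $\delta_{r}(u)$ at the prescribed time $\zeta$: concretely, concatenate three pieces — a straight horizontal segment from $(-s,0,\dots,0)$ to a point of the form $(\zeta - \rho,0,\dots,0)$ for suitable small $\rho$, then a copy (rescaled to the window of length $\approx r$ around time $\zeta$) of the curve produced by Lemma \ref{curve} which travels a first-coordinate displacement of order $r$ while correcting all the other coordinates by the required amounts $ru_{i}$ and $r^{2}u_{ij}$, then another straight segment out to $(s,0,\dots,0)$. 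The parameter $s=r/\Delta$ provides exactly the slack needed: the non-horizontal "effort" of the correction is spread over a time interval of length comparable to $s$, so the excess speed and the angular deviation are both of order $\Delta$. Plugging $A=\max_{i}|ru_{i}|\lesssim r$ and $B=\max|r^{2}u_{ij}|\lesssim r^{2}$ and "base length" $\sim s = r/\Delta$ into the estimates (2) and (3) of Lemma \ref{curve} (after the obvious rescaling of the domain from $[0,1]$ to an interval of length $\sim s$, which divides velocities by $\sim s$) yields $\mathrm{Lip}_{\bbG}(g)\le 1+O(\Delta^{2})\le 1+\eta\Delta$ and $|(p\circ g)'-p(E)|\le O(\Delta)=C_{m}\Delta$, provided $\Delta<\Delta(\eta)$ is small enough; this is conclusions (3) and (4). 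Conclusion (2) holds by construction, after choosing the internal parameters so the correction curve hits $\delta_{r}(u)$ at time $\zeta$.

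The main obstacle is bookkeeping: one must arrange the time-parametrization so that the Lemma \ref{curve} block occupies a subinterval of $[-s,s]$ whose length is genuinely of order $s$ (not of order $r$), so that after rescaling the velocities pick up a factor $\sim \Delta$ rather than being $O(1)$; and one must check that the filler straight segments before and after the block have first-coordinate speed close to $1$ (not close to $0$), which forces the block to carry almost all of the first-coordinate displacement as well, consistent with Lemma \ref{curve}(1)'s normalization $y_{1}\sim s$. Once the interval lengths are chosen correctly — e.g.\ letting the block have length $s$ and the two straight fillers split the rest — the constants $\Delta(\eta)$ and $C_{m}$ come out of the explicit bounds in \eqref{lipcase1}, \eqref{lipcase2}, \eqref{derivcase1}, \eqref{derivcase2}, and the final verification that $g$ is a concatenation of horizontal lines is immediate since each piece is. The only place the individual group structure of $\bbG_{r}$ enters is through Lemma \ref{curve} and Lemma \ref{existF}, both already established.
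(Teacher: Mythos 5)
Your overall strategy --- normalize to $x=0$ and $E=X_{1}$ via left translation and Lemma \ref{existF}, then build the correction on $[-s,s]$ out of Lemma \ref{curve} so that rescaling over an interval of length $\sim s=r/\Delta$ produces the factors of $\Delta$ --- is the same as the paper's. However, two steps would fail as written. First, Lemma \ref{curve} only produces curves from $0$ to points $y$ with $y_{i}=0$ for all $i>1$, whereas the point you must hit, $\delta_{r}(u)$, has horizontal coordinates $ru_{2},\dots,ru_{r}$ that are generically nonzero. Your three-piece curve (axis segment, Lemma \ref{curve} block, axis segment) therefore cannot pass through $\delta_{r}(u)$, and the claim that the block ``corrects all the other coordinates by the required amounts $ru_{i}$'' misapplies the lemma. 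The paper needs an extra piece: the horizontal lift (Definition \ref{horizontallift}) of the straight segment in $\bbR^{r}$ joining $p\bigl((s,0,\dots,0)\delta_{r}(u)\bigr)$ to $(s/2,0,\dots,0)$, which disposes of the coordinates $ru_{i}$ and yields an intermediate point $z$ with $p(z)=(s/2,0,\dots,0)$ to which Lemma \ref{curve} does apply; one must then also compute how this lift perturbs the vertical coordinates.

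Second, your identification of $A$ and $B$ is wrong, and not innocently so. In Lemma \ref{curve}, $A=\max_{i}|y_{i1}|$ bounds \emph{vertical} coordinates, not $\max_{i}|ru_{i}|$. Since the block must carry a first-coordinate displacement of order $s$ (as you eventually note), the relevant endpoint is a group product such as $(s,0,\dots,0)\delta_{r}(u)$, whose $(i,1)$ vertical coordinates acquire cross terms $-\tfrac{1}{2}rsu_{i}$ from the group law (see \eqref{coordver} and \eqref{ABbounds}); hence the correct bound is $A\le Crs$, not $O(r^{2})$, and not merely ``$A\lesssim r$''. This distinction is exactly what makes the estimate close: with $y_{1}\sim s$ one gets $16P^{4}A^{2}/y_{1}^{4}\le Cr^{2}s^{2}/s^{4}=C\Delta^{2}$, whereas the bound $A\lesssim r$ that you actually plug in gives only $Cr^{2}/s^{4}=C\Delta^{2}/s^{2}$, which is unbounded as $r\to 0$ with $\Delta$ fixed, so your claimed conclusion $\mathrm{Lip}_{\bbG}(g)\le 1+O(\Delta^{2})$ does not follow from your stated bounds. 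The computation of these cross terms is the essential quantitative content of the lemma, not the ``bookkeeping'' you defer.
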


\begin{proof}
Using group translations we can assume that $x=0$. Since $\bbG_{r}$ is a free Carnot group, we can also use Lemma \ref{existF} to assume $E=X_{1}$.

For $|t|\geq s$ the curve $g(t)$ is explicitly defined by (1) and satisfies (3) and (4). To define $g(t)$ for $|t|<s$ we consider the two cases $-s< t\leq \zeta$ and $\zeta \leq t < s$. These are similar so we show how to define the curve for $-s< t\leq \zeta$. By using left group translations by $\pm sE(0)$ and reparameterizing the curve, it suffices to show the following claim.
\end{proof}

\begin{claim}\label{claimforlater}
There exists $0<\Delta(\eta)<1/4$, depending on $\eta$ and $\bbG_{r}$, and there exists $C\geq 1$, depending only on $\bbG_{r}$, for which the following holds.

If $0<\Delta<\Delta(\eta)$ then there exists a Lipschitz horizontal curve $\varphi \colon [0, s+\zeta]\to \bbG_{r}$ such that $\varphi(0) = 0, \varphi(s+\zeta)=(s,0,\ldots,0)\delta_r(u)$ and:
\begin{itemize}
    \item[(A)] $\mathrm{Lip}_{\bbG}(\varphi)\leq 1+\eta\Delta$,
    \item[(B)] $\varphi'(t)$ exists and $|(p\circ\varphi)'(t)-(1,0,\ldots, 0)|\leq C \Delta$ for $t\in [0, s+\zeta]$ outside a finite set. 
\end{itemize}
\end{claim}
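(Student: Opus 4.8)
The plan is to use the reduction to Claim~\ref{claimforlater} already carried out above, and to prove the Claim by an explicit construction carried out after a single dilation has normalised all relevant scales. First I would write down the target $q:=(s,0,\ldots,0)\delta_r(u)$ from the group law of $\bbG_r$. Since we have reduced to $E=X_1$, we have $E(0)=e_1$ and $\zeta=ru_1$, and a direct computation gives $q_1=s+\zeta$, $q_k=ru_k$ for $2\le k\le r$, $q_{i1}=r^2u_{i1}-\tfrac{sr}{2}u_i$ for $2\le i\le r$, and $q_{ij}=r^2u_{ij}$ for $1<j<i\le r$. From $d(u)\le1$, the bound $d(x)\ge|p(x)|$ and \eqref{sqrtdistance} one gets $|u_k|\le1$ and $|u_{ij}|\le C_K^2$; combined with $s=r/\Delta$ and $\Delta<1/4$ this gives $\tfrac{3r}{4\Delta}\le L:=s+\zeta\le\tfrac{5r}{4\Delta}$. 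Setting $\bar q:=\delta_{1/L}(q)$ one checks $\bar q_1=1$, $|\bar q_k|<2\Delta$, $|\bar q_{i1}|<2\Delta$ (using $|q_{i1}|\le C_K^2r^2+\tfrac{r^2}{2\Delta}\le r^2/\Delta$ once $\Delta\le 1/(2C_K^2)$), and $|\bar q_{ij}|\le 2C_K^2\Delta^2$ for $1<j<i$. Thus $\bar q$ lies within $O(\Delta)$ of $(1,0,\dots,0)$, with constants depending only on $\bbG_r$.

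The heart of the argument is then to produce a Lipschitz horizontal curve $\bar\varphi\colon[0,1]\to\bbG_r$, a concatenation of horizontal lines, with $\bar\varphi(0)=0$, $\bar\varphi(1)=\bar q$, $\mathrm{Lip}_{\bbG}(\bar\varphi)\le1+\eta\Delta$ and $|(p\circ\bar\varphi)'(t)-(1,0,\ldots,0)|\le C'\Delta$ outside a finite set, where $C'=C'(\bbG_r)\ge1$. This is where the bracket structure of $\bbG_r$ enters; it runs parallel to Lemma~\ref{curve} (equivalently \cite[Lemma 4.2]{PS16}), but in the regime where every correction has size $O(\Delta)$. I would take as base curve the horizontal line $t\mapsto\delta_t(v)=\exp(tE_v)$, where $v:=(1,\bar q_2,\ldots,\bar q_r,0,\ldots,0)$ and $E_v:=X_1+\sum_{k>1}\bar q_kX_k\in V_1$: it runs from $0$ to $v$, keeps the first coordinate equal to $t$, and has all vertical coordinates zero at the endpoint. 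Next I would split $[0,1]$ into $P=r(r-1)/2$ consecutive subintervals, one for each pair $1\le j<i\le r$, and on each subinterval graft a small maneuver that perturbs only the horizontal coordinates $\bar\varphi_k$ with $k>1$ (never $\bar\varphi_1$): on the $(i,1)$-subinterval a triangular bump in $\bar\varphi_i$ whose integral is prescribed so that the contribution to the $(1,i)$-signed area equals $\bar q_{i1}$, and on the $(i,j)$-subinterval with $i>j>1$ a small closed loop in the $(\bar\varphi_i,\bar\varphi_j)$-plane with prescribed enclosed signed area. Since the loops are closed, $\bar\varphi(1)$ still has horizontal part $v_H$, and Proposition~\ref{horizontalequation} computes the vertical coordinates; a maneuver for one pair perturbs the signed areas of pairs sharing an index, but these interaction terms are of order $\Delta^2$ and are absorbed by adjusting the prescribed loop areas, so all parameters are fixed by an elementary finite bookkeeping. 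Because every perturbation velocity is $O(\Delta)$ (with constant depending on $r$) and is transverse to $\bar\varphi_1'=1$, each line segment of $\bar\varphi$ has Euclidean speed $(1+O(\Delta^2))^{1/2}\le1+\eta\Delta$ — here one shrinks $\Delta(\eta)$ depending on $\eta$ and $r$ — and has $(p\circ\bar\varphi)'$ within $C'\Delta$ of $(1,0,\ldots,0)$.

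Finally I would set $\varphi(t):=\delta_L\big(\bar\varphi(t/L)\big)$ for $t\in[0,L]=[0,s+\zeta]$. Since dilations are Carnot automorphisms sending horizontal lines to horizontal lines, $\varphi$ is a concatenation of horizontal lines; $\varphi(0)=0$ and $\varphi(s+\zeta)=\delta_L(\bar q)=q=(s,0,\ldots,0)\delta_r(u)$; and because $p\circ\varphi(t)=L\,p(\bar\varphi(t/L))$ one has $(p\circ\varphi)'(t)=(p\circ\bar\varphi)'(t/L)$, so $\mathrm{Lip}_{\bbG}(\varphi)=\mathrm{Lip}_{\bbE}(p\circ\varphi)=\mathrm{Lip}_{\bbG}(\bar\varphi)\le1+\eta\Delta$ and the velocity estimate transfer verbatim, giving (A) and (B) with $C:=C'$. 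The main obstacle is the middle step: the explicit construction of $\bar\varphi$ together with the simultaneous verification that its endpoint is exactly $\bar q$ and that both the Lipschitz bound $1+\eta\Delta$ (with the given coefficient $\eta$) and the velocity bound hold. The Lipschitz bound is what forces all maneuvers to be $O(\Delta)$-gentle and transverse, so that the excess over $1$ is quadratic in $\Delta$; this is possible precisely because $s+\zeta\approx r/\Delta$ is far larger than $d(\delta_r(u))\le r$, giving the line $t\mapsto x+tE(x)$ ample room to detour through $x\delta_r(u)$ with almost unchanged length and direction, and the dilation in the first and last steps is merely the device that makes this quantitative while reusing the Lemma~\ref{curve}-type estimates uniformly.
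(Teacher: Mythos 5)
Your outer architecture (normalize by $\delta_{1/L}$, build a curve on $[0,1]$ near the unit-speed horizontal line, dilate back) is sound, and your coordinate/size estimates for $\bar q$ are correct; this is a genuinely different route from the paper, which instead first lifts the straight line from $p(y)$ to $(s/2,0,\ldots,0)$ to ``straighten'' the lateral horizontal displacement and then applies Lemma~\ref{curve} verbatim to a target whose horizontal part lies along $e_{1}$. But the step you yourself flag as the main obstacle --- the construction of $\bar\varphi$ --- contains a genuine gap: the assertion that the interaction terms between maneuvers ``are of order $\Delta^{2}$'' is false, and the ``elementary finite bookkeeping'' does not close as described. Concretely, by Proposition~\ref{horizontalequation} a closed $(i,j)$-loop supported on $[a,b]$ with deviation $\delta_{i}$ in the $i$-th coordinate contributes exactly $\int_{a}^{b}\delta_{i}\,dt$ to the $(i,1)$-area (the boundary terms cancel because the loop is closed). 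A loop enclosing area $\bar q_{ij}=O(\Delta^{2})$ in time $1/P$ has diameter $O(\Delta)$, so for a generic closed loop this contribution is $O(\Delta/P)$ --- the \emph{same order in $\Delta$} as the target $\bar q_{i1}=O(\Delta)$, not $O(\Delta^{2})$. Moreover even the interactions that really are $O(\Delta^{2})$ (e.g.\ the $(i,1)$-bump polluting the $(i,k)$-area by $\bar q_{k}\bar q_{i1}$) are of the same order as the targets $\bar q_{ik}=O(\Delta^{2})$ they pollute, so they cannot be ``absorbed'' as negligible errors: they must be cancelled exactly, and since changing a loop's prescribed area changes its diameter and hence its own pollution of other areas, the resulting system of compensations is nonlinear and not triangular. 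Your bookkeeping would close if you additionally imposed that every loop has zero mean in each perturbed coordinate ($\int\delta_{i}=\int\delta_{j}=0$), in which case loops pollute nothing and only the explicitly computable bump-on-loop corrections remain --- but you neither state this nor design loops with that property.

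This is precisely the difficulty the paper engineers away twice over: the six-segment maneuver in Case~2 of Lemma~\ref{curve} is built so that $W^{1}W^{2}$ has \emph{every} coordinate zero except the first and the single targeted vertical one, and Lemma~\ref{curve} is only ever applied to endpoints with $y_{i}=0$ for $i>1$, so that all inactive horizontal coordinates vanish identically along the curve and no cross-areas are generated at all. To repair your argument you should either (i) prepend (or append) a lifted straight segment that removes the lateral horizontal displacement first, reducing to the zero-cross-term situation of Lemma~\ref{curve} exactly as the paper does via the curve $\beta$, or (ii) replace your generic bumps and loops by the paper's engineered maneuvers and verify the zero-mean/zero-cross-area properties explicitly. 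The final dilation step and the Lipschitz and velocity estimates (via Lemma~\ref{lipschitzhorizontal} and the transversality of all perturbations to $e_{1}$) are fine once a correct $\bar\varphi$ is in hand.
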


\begin{proof}[Proof of Claim]
Throughout the proof, $C\geq 1$ will denote a constant depending only on the Carnot group $\bbG_{r}$. Let $y=(s,0,\ldots,0) \delta_r(u)$. The definition of group product implies
\begin{equation}\label{coordhor}
y_1=s+ru_1,  \qquad \qquad y_i=ru_i \quad \mbox{for }i>1,
\end{equation}
\begin{equation}\label{coordver}
y_{i1}= r^2u_{i1}-\frac{1}{2}rs u_{i}\quad \mbox{for } i>1, \qquad \qquad y_{ij}= r^2 u_{ij} \quad \mbox{for }i>j>1.
\end{equation}

The main idea is to use Lemma \ref{curve} to construct the desired curve. However, Lemma \ref{curve} only gives curves joining $0$ to endpoints whose horizontal coordinates are of the form $(L,0,\ldots,0)$. To handle this, we first construct another curve $\beta$ which shifts the endpoint $y$ to one of the required form. The desired curve $\varphi$ will then be the join of $\beta$ with a curve $\alpha$ from Lemma \ref{curve}.

\medskip

\emph{Construction of the curve $\beta$.}
Define a horizontal curve $h\colon [0,1]\to \bbG_{r}$ as the horizontal lift (Definition \ref{horizontallift}) starting at $y$ of the straight line joining $p(y)$ to $(s/2, 0, \ldots, 0)$. Clearly
\[h_{1}(t)=(1-t)(s+ru_{1})+ts/2 \quad \mbox{and} \quad h_{i}(t)=(1-t)ru_{i} \quad \mbox{for }2\leq i\leq r.\]
Define $z=h(1)\in \bbG_{r}$. The horizontal coordinates of $z$ satisfy $p(z)=(s/2, 0, \ldots, 0)$, and a simple computation using \eqref{coordhor}, \eqref{coordver} and Definition \ref{horizontallift} yields the vertical coordinates
\[z_{i1}= r^2u_{i1} - \frac{1}{4}rsu_{i} \quad \mbox{if }i>1 \quad \mbox{and} \quad z_{ij}=r^2 u_{ij} \quad \mbox{if }i>j>1.\]
Equivalence of the CC metric and Koranyi quasi-metric implies that $|u_{ij}|\leq C$ for $i>j\geq 1$. It follows that
\begin{equation}\label{ABbounds}
|z_{i1}|\leq Crs \quad \mbox{if }i>1 \quad \mbox{and} \quad |z_{ij}|\leq Cr^{2} \quad \mbox{if }i>j>1.
\end{equation}
Clearly
\begin{equation}\label{derivh}
(p\circ h)'(t)=(-s/2-ru_{1}, -ru_{2}, \ldots, -ru_{r}).
\end{equation}
Since $ru_{1}=\zeta$, we obtain
\[|(p\circ h)'+ (s/2+\zeta, 0, \ldots, 0)|=|(0,ru_{2}, \ldots, ru_{r})| \leq r. \]

Now define $\beta\colon [s/2, s+\zeta]\to \bbG_{r}$ joining $z$ to $y$ by $\beta(t)=h\Big(1 - \frac{(t-s/2)}{(s/2+\zeta)} \Big)$. Assuming $\Delta<1/4$ gives
\begin{equation}\label{april5}
\frac{r}{s/2 + ru_{1}}\leq \frac{4r}{s}=4\Delta.
\end{equation}
Hence
\[|(p\circ \beta)'-(1,0,\ldots,0)|\leq \frac{r}{s/2 + \zeta}\leq 4\Delta.\]
Using \eqref{derivh}, \eqref{april5} and $d(u)\leq 1$ gives
\begin{align*}
\mathrm{Lip}(p\circ \beta) &\leq \frac{|(s/2+ru_{1},ru_{2},\ldots, ru_{r})|}{(s/2+\zeta)}\\
&=\Big( 1+\frac{r^2u_2^2}{(s/2+ru_1)^2}+\ldots+\frac{r^2u_r^2}{(s/2+ru_1)^2}\Big)^{1/2}\\
&\leq \Big(1+16\Delta^2(u_2^2+\ldots + u_r^2) \Big)^{1/2}\\
&\leq 1+16\Delta^2\\
&\leq 1+\eta\Delta,
\end{align*}
assuming in the last inequality that $\Delta \leq \eta/16$.

\medskip

\emph{Construction of the curve $\alpha$.}
Let $A:=\max_{2\leq i\leq r}|z_{i1}|$ and $B:=\max_{i>j>1}|z_{ij}|$. Since $p(z)=(s/2, 0, \ldots, 0)$, Lemma \ref{curve} gives the existence of a Lipschitz horizontal curve $g\colon [0,1]\to \bbG _{r}$ joining $0$ to $z$ such that
\begin{enumerate}
\item[(A')] $\mathrm{Lip}_{\bbG}(g)\leq \max \left\{ (s/2)\Big(1+\frac{256P^4A^2}{s^4}\Big)^{1/2}, \, (s/2)\Big(1+\frac{1152P^2B}{s^2}\Big)^{1/2} \right\}$,
\item[(B')] $g'(t)$ exists for all $t\in [0,1]$ except finitely many points and satisfies
\[ |(p\circ g)'(t) - p(z)|\leq \max \left\{ \frac{8P^2A}{s}, \, 24P\sqrt{B} \right\}.\]
\end{enumerate}
Recall from \eqref{ABbounds} that $A\leq Crs$ and $B\leq Cr^2$. It easily follows from (A') that
\[\mathrm{Lip}_{\bbG}(g)\leq (s/2)+C\Delta^{2}s \leq (s/2)+\eta\Delta s/2\]
using in the second inequality that $\Delta$ is sufficiently small. Similarly, it follows from (B') that
\[|(p\circ g)' - (s/2,0,\ldots,0)|\leq C\Delta s.\]

Now define $\alpha\colon [0,s/2] \to \bbG_{r}$ by $\alpha(t)=g(2t/s)$. The curve $\alpha$ satisfies:
\[\mathrm{Lip}_{\bbG}(\alpha) \leq 1+\eta \Delta,\]
\[|(p\circ \alpha)' - (1,0,\ldots,0)|\leq C \Delta.\]
Joining $\alpha$ and $\beta$ gives a curve $\varphi$ with the desired properties.
\end{proof}

The following result is the analogue of \cite[Lemma 1.1]{DM12} and \cite[Lemma 2.1]{DMf} in the linear setting. 

\begin{lemma}\label{uds}
Let $L$ be the union of images of all curves constructed in Lemma \ref{curveforalmostmax}, where $x, u \in \bbQ^{n}$, $E$ is a linear combination of $\{X_{i}: 1\leq i\leq r\}$ with rational coefficients, and $r,s \in \bbQ$. Then there is a $G_{\delta}$ set $N\subset \bbG_{r}$ containing $L$ of CC-Hausdorff dimension one.
\end{lemma}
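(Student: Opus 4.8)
The plan is to mimic the standard construction from the linear setting, which proceeds by building a countable union of curves and then thickening it slightly in a controlled way to obtain a $G_\delta$ set whose Hausdorff dimension does not increase beyond one. First I would note that the index set is countable: there are countably many choices of $x,u\in\bbQ^n$, countably many rational linear combinations $E$ of $X_1,\dots,X_r$, and countably many pairs $r,s\in\bbQ$ (subject to the constraints $0<r<\Delta(\eta)$, $s=r/\Delta$ for appropriate $\Delta\in\bbQ$, etc., all of which can be met by rationals). Enumerate the corresponding curves as $g_1,g_2,\dots$, with images $L_k:=g_k(\bbR)\subset\bbG_r$, so that $L=\bigcup_k L_k$. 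Each $L_k$ is a concatenation of finitely many horizontal line segments, hence a compact (after restricting to a bounded parameter interval — outside $[-s,s]$ the curve is a single line, so $L_k$ itself is a closed subset of $\bbG_r$ that is a countable union of compact segments) set which, being a rectifiable curve, has finite $\mathcal{H}^1$ measure with respect to the Euclidean metric, and by Proposition \ref{euclideanheisenberg} (or directly by Lemma \ref{lipschitzhorizontal}, since the CC-length equals the Euclidean length of the horizontal projection) also $\sigma$-finite $\mathcal{H}^1$ measure with respect to the CC metric. In particular each $L_k$ has CC-Hausdorff dimension at most one, and so does their countable union $L$.

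Next I would perform the thickening. The point is that a countable union of sets of dimension one need not be $G_\delta$, so we enlarge $L$ to a $G_\delta$ superset without increasing the dimension. Fix any decreasing sequence $\ep_k\downarrow 0$. For each $k$, cover the (separable, $\sigma$-compact) set $L_k$ by countably many open CC-balls $B(z_{k,l},\rho_{k,l})$ with $\sum_l (2\rho_{k,l}) < \ep_k$, which is possible precisely because $\mathcal H^1_{CC}(L_k)<\infty$; indeed one may take the radii so small that $\sum_l \rho_{k,l}^{1/\lambda} < \ep_k$ simultaneously for a fixed $\lambda$ slightly larger than one — this is the key quantitative refinement. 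Set $U_m := \bigcup_{k\le m}\bigcup_l B(z_{k,l},\rho_{k,l})$... more carefully, set $U_m := \bigcup_{k}\bigcup_{l}$ over the covers chosen at scale $\ep_m$ of \emph{all} the $L_k$ (choosing, for the $m$-th cover, covers of each $L_k$ with total radius-sum-to-the-power-$1/\lambda$ less than $2^{-k}\ep_m$), so that $U_m\supset L$ is open and $N:=\bigcap_m U_m$ is a $G_\delta$ set containing $L$.

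It remains to bound the CC-Hausdorff dimension of $N$. I would show $\mathcal H^{\lambda}_{CC}(N)=0$ for every $\lambda>1$, which forces $\dim_{CC} N\le 1$; combined with $N\supset L\supset L_1\ne\emptyset$ and Remark \ref{Hausone} (which shows any UDS, and more generally — here we only need that $N$ contains a nontrivial horizontal segment — has dimension at least one) this gives dimension exactly one. To estimate $\mathcal H^\lambda_{CC}(N)$: for each $m$, $N\subset U_m = \bigcup_{k,l} B(z_{k,l},\rho_{k,l})$ is covered by CC-balls of diameter at most $2\rho_{k,l}$, and $\sum_{k,l}(2\rho_{k,l})^{\lambda} \le 2^\lambda\sum_k \big(\sum_l \rho_{k,l}^{1/\lambda}\big)^{\lambda}$ — wait, one must be slightly careful since $\lambda>1$ makes $(\sum a_l)^\lambda \ge \sum a_l^\lambda$ the wrong way; instead choose the covers so that each individual $\rho_{k,l}<1$ and $\sum_l \rho_{k,l}^{\lambda}$ is small, which is again possible because $L_k$ has finite (indeed, we may as well use that it has $\sigma$-finite, so on each compact piece finite) $\mathcal H^1_{CC}$ measure and $\lambda>1$: for a set of finite $\mathcal H^1$ measure one can find covers by sets of diameter $<\delta$ with $\sum (\mathrm{diam})^1$ bounded, hence $\sum(\mathrm{diam})^\lambda \le \delta^{\lambda-1}\sum(\mathrm{diam})^1 \to 0$. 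Taking $\delta\to 0$ along $m$ gives $\mathcal H^\lambda_{CC}(N)=0$. This is exactly the argument of \cite[Lemma 1.1]{DM12}, \cite[Lemma 2.1]{DMf}, adapted by replacing the Euclidean metric with the CC metric throughout and using Lemma \ref{lipschitzhorizontal} (together with the fact that the curves of Lemma \ref{curveforalmostmax} are concatenations of finitely many horizontal lines) to guarantee that each $L_k$ has $\sigma$-finite one-dimensional CC-Hausdorff measure.

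The main obstacle, and the only place the geometry of $\bbG_r$ enters, is verifying that each $L_k$ really does have $\sigma$-finite (locally finite) $\mathcal H^1_{CC}$-measure: one must know that a horizontal line segment $t\mapsto x+tE(x)$ in $\bbG_r$ has finite CC-length on bounded parameter intervals, which is immediate from Lemma \ref{horizontaldistances} (it gives $d(x\exp tE, x\exp t'E) = |t-t'|\omega(E)$, so the segment is a CC-rectifiable curve of length $(\text{length of parameter interval})\cdot\omega(E)$), and that the curve $g_k$ of Lemma \ref{curveforalmostmax} coincides with a single horizontal line outside the compact parameter set $[-s,s]$, so that $L_k$ is a locally finite union of such segments. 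Everything else is the routine measure-theoretic thickening argument, identical in form to the Euclidean case.
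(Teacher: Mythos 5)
Your proposal is correct and follows essentially the same route as the paper's proof: both adapt the thickening argument of \cite[Lemma 1.1]{DM12} to the CC metric, decomposing $L$ into countably many horizontal segments of bounded CC-length, taking a nested intersection of shrinking open neighborhoods to obtain a $G_\delta$ set containing $L$, and bounding $\mathcal{H}^{\lambda}$ for $\lambda>1$ by covering each segment's neighborhood with roughly $k$ balls of radius roughly $1/k$. (Your initial claim that $\sum_{l}2\rho_{k,l}$ can be made smaller than any $\ep_k$ cannot hold for a segment of positive length, but you catch and correct this yourself; your final estimate $\sum(\mathrm{diam})^{\lambda}\leq \delta^{\lambda-1}\sum(\mathrm{diam})$ is exactly the computation the paper performs.)
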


\begin{proof}
It suffices to prove that the desired set $N$ can be constructed with CC-Hausdorff dimension less than or equal to one. For any $S\subseteq \bbG_{r}$ and $r> 0$, let
\[
B_r(S):=\left\{x\in\mathbb{G}_{r}\ : \inf_{y\in S}d(x,y)<r\right\}.
\]
Let $I$ be a horizontal line segment of length at most one, $r,\delta>0$ and $k\geq 4/\delta$ a positive integer. Since $I$ is horizontal, we can cover $B_{1/k}(I)$ with $k$ open balls with diameter $4/k\leq \delta$. Hence $\mathcal{H}^r_{\delta}(B_{1/k}(I))\leq 4^r k^{1-r}$.

The set $L$ contains countably many curves, each of which is either a line or a union of finitely many lines. We can write $L=\bigcup_{i=1}^{\infty} L_i$, where $L_i$ is a horizontal line segment of length less than or equal to one. As in \cite[Lemma 1.1]{DM12}, we define
\[
 N_i:=\bigcup_{j=1}^{\infty} B_{1/2^{i+j}}(L_j), \qquad N:=\bigcap_{i=1}^{\infty} N_i.
\]
Then $N$ is a $G_{\delta}$ set containing $L$. An easy computation, as in \cite[Lemma 1.1]{DM12}, shows that $\mathcal{H}^{r}_{\delta}(N_{i}) \to 0$ as $i\to \infty$ for every $r>1$. Hence the CC-Hausdorff dimension of $N$ is less than or equal to one.
\end{proof}

The following theorem asserts that the existence of an `almost maximal' directional derivative in $N$ suffices for differentiability.

\begin{notation}\label{D^f}
Fix a Lebesgue null $G_{\delta}$ set $N\subset \bbG_{r}$ as in Lemma \ref{uds}. For any Lipschitz function $f:\bbG_{r}\to \bbR$, define:
\[D^{f}:=\{ (x,E) \in N\times V_{1}\colon \omega(E)=1,\, Ef(x) \mbox{ exists}\}.\]
\end{notation}

\begin{proposition}\label{almostmaximalityimpliesdifferentiability}
Let $f\colon \bbG_{r} \to \mathbb{R}$ be a Lipschitz function with $\mathrm{Lip}_{\bbG}(f) \leq 1/2$. Suppose $(x_{\ast}, E_{\ast})\in D^{f}$. Let $M$ denote the set of pairs $(x,E)\in D^{f}$ such that $Ef(x)\geq E_{\ast}f(x_{\ast})$ and
\begin{align*}
& |(f(x + tE_{\ast}(x))-f(x)) - (f(x_{\ast} + tE_{\ast}(x_{\ast}))-f(x_{\ast}))| \\
& \qquad \leq 6|t| (  (Ef(x)-E_{\ast}f(x_{\ast}))\mathrm{Lip}_{\bbG}(f))^{\frac{1}{4}}
\end{align*}
for every $t\in (-1,1)$. If
\[\lim_{\delta \downarrow 0} \sup \{Ef(x)\colon (x,E)\in M \mbox{ and }d(x,x_{\ast})\leq \delta\}\leq E_{\ast}f(x_{\ast})\]
then $f$ is Pansu differentiable at $x_{\ast}$ with Pansu differential
\[L(x)=E_{\ast}f(x_{\ast})\langle x , E_{\ast}(0) \rangle=E_{\ast}f(x_{\ast})\langle p(x) , p(E_{\ast}) \rangle.\]
\end{proposition}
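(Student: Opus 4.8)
The plan is to follow the scheme pioneered by Preiss and adapted by Doré--Maleva and by Pinamonti--Speight: argue by contradiction, assume $f$ fails to be differentiable at $x_\ast$, and produce from this failure a pair $(x,E)\in M$ arbitrarily close to $x_\ast$ with $Ef(x)$ bounded away from $E_\ast f(x_\ast)$, contradicting the hypothesis. First I would set $e_\ast:=E_\ast f(x_\ast)$ and $L(w):=e_\ast\langle w,E_\ast(0)\rangle$, which is $\bbG$-linear with $\mathrm{Lip}_{\bbG}(L)=e_\ast\le 1/2$ by Lemma \ref{lemmascalarlip}. If $f$ is not Pansu differentiable at $x_\ast$ with this differential, there exist $\epsilon_0>0$ and a sequence $w_k\to 0$ in $\bbG_r$ with
\[
|f(x_\ast w_k)-f(x_\ast)-L(w_k)|>\epsilon_0\, d(w_k).
\]
(One also checks $L$ is the \emph{only} candidate differential: any $\bbG$-linear $\tilde L$ restricts on horizontal lines to the directional derivatives $Ef(x_\ast)$, and since $E_\ast f(x_\ast)=e_\ast$ is the directional derivative in direction $E_\ast$, while $Ef(x_\ast)\le e_\ast$ for all $E$ with $\omega(E)=1$ by near-maximality, the $\bbG$-linear map forced by these values is exactly $L$; so failure of differentiability is failure with respect to $L$.)

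Next I would run the standard ``find a better point'' mechanism. The deficit at $w_k$ above means that along the horizontal segment $t\mapsto x_\ast\delta_t\xi_k$ (for $\xi_k$ the renormalization of $w_k$) the function $f$ deviates from its linear model by a definite amount at scale $d(w_k)$. Using the curve from Lemma \ref{curveforalmostmax} with appropriate parameters $u,E,r,\Delta$ — chosen so that the curve passes through $x_\ast\delta_{r}(u)$ near $w_k$ while coinciding with a horizontal line $t\mapsto x+tE(x)$ outside a short interval — one compares the increment of $f$ along this perturbed curve with its increment along the unperturbed line through $x_\ast$ in direction $E_\ast$. The Lipschitz bound $\mathrm{Lip}_{\bbG}(g)\le 1+\eta\Delta$ on the curve (conclusion (3) of Lemma \ref{curveforalmostmax}), together with $\mathrm{Lip}_{\bbG}(f)\le 1/2$, forces the existence of a point $x$ on the curve and a unit direction $E$ (the normalized horizontal derivative of $g$ there, which by conclusion (4) is $\Delta$-close to $E_\ast$) at which $Ef(x)\ge e_\ast + \theta$ for some $\theta=\theta(\epsilon_0)>0$ — otherwise the total increment of $f$ along the curve would be too small to produce the observed deviation. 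Because $E$ is $C_m\Delta$-close to $E_\ast$ and $x$ is within $O(s)=O(r/\Delta)$ of $x_\ast$ (hence close to $x_\ast$ once $d(w_k)$ is small), Lemma \ref{closedirectioncloseposition} controls $d(x,x_\ast)$ and also the distance between the two curves, which is what makes the ``flatness along $E_\ast$'' inequality defining $M$ hold: the left-hand side in the definition of $M$ is bounded by a multiple of $|t|$ times the $C^0$-distance between the two curves, which is $O(|t|\sqrt{\Delta})$, and this is absorbed into $6|t|((Ef(x)-e_\ast)\mathrm{Lip}_{\bbG}(f))^{1/4}$ once $\Delta$ is small relative to $\theta$. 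Thus $(x,E)\in M$, $d(x,x_\ast)\le\delta$ for $\delta$ as small as we like, yet $Ef(x)\ge e_\ast+\theta$, contradicting $\lim_{\delta\downarrow 0}\sup\{Ef(x):(x,E)\in M,\ d(x,x_\ast)\le\delta\}\le e_\ast$.

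A few technical points need care. One must verify that the perturbed point $x$ at which the large directional derivative is attained actually lies in $N$: this is why $N$ was built in Lemma \ref{uds} to contain all curves from Lemma \ref{curveforalmostmax} with rational data, and one first reduces to rational parameters by a density/continuity argument before invoking Lemma \ref{curveforalmostmax}, so that $(x,E)\in D^f\subset N\times V_1$. One also needs that $Ef(x)$ exists at the chosen $x$ — this is extracted from the a.e. differentiability of the Lipschitz function $t\mapsto f(g(t))$ along the curve (the set of ``good'' parameters is of full measure, so it meets any interval where the averaged difference quotient is large). Finally, the quantitative bookkeeping linking $\epsilon_0$, the scale $d(w_k)$, the curve parameter $\Delta$, and the gain $\theta$ has to be arranged so that $\Delta\to 0$ as $k\to\infty$ while $\theta$ stays bounded below; this is the delicate part and is exactly the computation carried out in \cite[proof of the corresponding proposition]{PS16}, so I would cite that argument, indicating only the modifications forced by the fact that here $p(E)$ ranges over all of $\bbR^r$ rather than $\bbR^2$ and that the perturbation curves come from the step-2 free-group Lemma \ref{curveforalmostmax} rather than its Heisenberg counterpart.

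\textbf{Main obstacle.} The genuinely delicate step is the ``find a better point'' argument: extracting, from a single quantitative failure of differentiability of $f$ at $x_\ast$, a point $x\in N$ \emph{arbitrarily close} to $x_\ast$ carrying a \emph{uniformly larger} directional derivative while simultaneously satisfying the flatness constraint in the definition of $M$. Balancing these three requirements — closeness of $x$ to $x_\ast$, a fixed-size gain $\theta$ in the directional derivative, and smallness of the flatness defect — against each other through the free parameters of Lemma \ref{curveforalmostmax} is where all the real work lies; everything else is a routine adaptation of \cite{PS16}.
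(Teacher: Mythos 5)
Your proposal follows essentially the same route as the paper's proof: argue by contradiction, build a perturbation curve via Lemma \ref{curveforalmostmax} with rational data so that its image lies in $N$, apply a mean-value-type argument (the paper uses \cite[Lemma 3.4]{Pre90}) to extract a nearby pair $(x,E)$ with a uniformly larger directional derivative, use Lemma \ref{closedirectioncloseposition} to verify the flatness condition defining $M$, and defer the quantitative bookkeeping to \cite{PS16}. The only detail you gloss over is the paper's explicit construction of a second auxiliary curve $h$ interpolating between the line through the rational approximation $\widetilde{x}_\ast$ in direction $\widetilde{E}_\ast$ and the line through $x_\ast$ in direction $E_\ast$, but this is implicit in your two-curve comparison and does not change the argument.
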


To prove Proposition \ref{almostmaximalityimpliesdifferentiability}, the idea is to argue by contradiction. We will first use Lemma \ref{curveforalmostmax} to modify the line $x_{\ast} + tE_{\ast}(x_{\ast})$ to form a Lipschitz horizontal curve $g$ in $N$ which passes through a nearby point showing non-Pansu differentiability at $x_{\ast}$. One then applies \cite[Lemma 3.4]{Pre90} with $\varphi=f\circ g$ to obtain a large directional derivative along $g$ and estimates for difference quotients in the new direction. One then develops these estimates to show that the new point and direction form a pair in $M$. This shows that there is a nearby pair in $M$ giving a larger directional derivative than $(x_{\ast},E_{\ast})$, a contradiction.

The proof of Proposition \ref{almostmaximalityimpliesdifferentiability} is almost the same as the proof of \cite[Theorem 5.6]{PS16}, replacing \cite[Lemma 4.2]{PS16} and \cite[Lemma 5.1]{PS16} in the Heisenberg group with Lemma \ref{curveforalmostmax} and Lemma \ref{closedirectioncloseposition} in the step 2 free Carnot group from the present paper. Note that the geometry of the individual Carnot group is strongly used in the proof when constructing two horizontal curves. We show how to begin the proof and construct these curves, then refer the reader to \cite[Theorem 5.6]{PS16} for the later parts which are the same as those treated there.

\begin{proof}[Proof of Proposition \ref{almostmaximalityimpliesdifferentiability}]
We can assume $\mathrm{Lip}_{\mathbb{H}}(f)>0$ since otherwise the statement is trivial. Let $\varepsilon>0$ and fix various parameters as follows.

\vspace{0.2cm}

\emph{Parameters.} Choose:
\begin{enumerate}
\item $0< v<1/32$ such that $4(1+20v)\sqrt{(2+v)/(1-v)}+v < 6$,
\item $\eta=\varepsilon v^{3}/3200$,
\item $\Delta(\eta/2)$, $C_{\mathrm{m}}$ and $C_{\mathrm{a}}=C_{\mathrm{angle}}(2)$ using Lemma \ref{curveforalmostmax} and Lemma \ref{closedirectioncloseposition}
\item rational $0< \Delta < \min \{\eta v^2,\, \Delta(\eta/2),\, \varepsilon v^{5}/(8C_{\mathrm{m}}^{2}C_{\mathrm{a}}^{4}\mathrm{Lip}_{\mathbb{G}}(f)^3) \}$,
\item $\sigma=9\varepsilon^{2}v^{5}\Delta^2/256$,
\item $0<\rho<1$ such that
\begin{equation}\label{directionaldifferentiability}
|f(x_{\ast} + tE_{\ast}(x_{\ast})) - f(x_{\ast})-tE_{\ast}f(x_{\ast})|\leq \sigma \mathrm{Lip}_{\mathbb{G}}(f)|t|
\end{equation}
for every $|t|\leq \rho$,
\item $0<\delta < \rho \sqrt{3\varepsilon v\Delta^{3}}/4$ such that
\[Ef(x)<E_{\ast}f(x_{\ast})+\varepsilon v\Delta/2\]
whenever $(x,E)\in M$ and $d(x,x_{\ast})\leq 4\delta(1+1/\Delta)$.
\end{enumerate}
To prove Pansu differentiability of $f$ at $x_{\ast}$ we will show:
\[|f(x_{\ast}\delta_{t}(h))-f(x_{\ast})-tE_{\ast}f(x_{\ast})\langle h, E_{\ast}(0) \rangle |\leq \varepsilon t\]
whenever $d(h) \leq 1$ and $0<t<\delta$. Suppose this is not true. Then there exists $u\in \mathbb{Q}^{n}$ with $d(u) \leq 1$ and rational $0<r<\delta$ such that:
\begin{equation}\label{badpoint}
|f(x_{\ast}\delta_{r}(u))-f(x_{\ast})-rE_{\ast}f(x_{\ast})\langle u, E_{\ast}(0) \rangle|> \varepsilon r.
\end{equation}
Let $s=r/ \Delta \in \mathbb{Q}$. We next construct Lipschitz horizontal curves $g$ and $h$ for which we can apply the mean value type lemma \cite[Lemma 3.4]{Pre90} with $\varphi:=f\circ g$ and $\psi:=f\circ h$.

\medskip

\emph{Construction of $g$.} To ensure that the image of $g$ is a subset of the set $N$, we first introduce rational approximations to $x_{\ast}$ and $E_{\ast}$. 

Since the Carnot-Carath\'{e}odory and Euclidean distances are topologically equivalent, $\mathbb{Q}^{n}$ is dense in $\mathbb{R}^{n}$ with respect to the distance $d$. The set
\[ \{E\in V\colon \omega(E)=1, \, E \mbox{ a rational linear combination of }X_{i},\, 1\leq i\leq r\}\]
is dense in $\{E\in V\colon \omega(E)=1\}$ with respect to the norm $\omega$. To see this, suppose $E\in V$ satisfies $\omega(E)=1$. The Euclidean sphere $\mathbb{S}^{r-1} \subset \mathbb{R}^{r}$ contains a dense set $S$ of points with rational coordinates. This fact is well known, e.g. one can use stereographic projection. Let $q=(q_1,\ldots, q_{r})\in \mathbb{S}^{r-1}$ be the coefficients of $E$ in the basis $\{X_{i}: 1\leq i\leq r\}$. Take $\widetilde{q}\in S$ such that $| q-\widetilde{q}|$ is small and define the rational approximation of $E$ as the linear combination of $\{X_{i}: 1\leq i\leq r\}$ with coefficients $\widetilde{q}_i$. 

Define
\begin{equation}\label{A1}A_{1}=(\eta \Delta/C_{\mathrm{a}})^{2}\end{equation}
and
\begin{equation}\label{A2}A_{2}=\Big(6- \Big( 4(1+20v) \Big( \frac{2+v}{1-v} \Big)^{\frac{1}{2}}+v\Big)\Big)^{2} \frac{(  \varepsilon v\Delta/2   )\mathrm{Lip}_{\mathbb{G}}(f))^{\frac{1}{2}}}{C_{\mathrm{a}}^{2}\mathrm{Lip}_{\mathbb{G}}(f)^{2}}.\end{equation}
Notice $A_{1}, A_{2}>0$ using, in particular, our choice of $v$. Choose $\widetilde{x}_{\ast}\in \mathbb{Q}^{n}$ and $\widetilde{E}_{\ast}\in V$ with $\omega( \widetilde{E}_{\ast})=1$, a rational linear combination of $\{X_{i}: 1\leq i\leq r\}$, sufficiently close to $x_{\ast}$ and $E_{\ast}$ to ensure:
\begin{equation}\label{nowlistingthese} d(\widetilde{x}_{\ast}\delta_{r}(u),x_{\ast})\leq 2r,\end{equation}
\begin{equation}\label{lista} d(\widetilde{x}_{\ast}\delta_{r}(u), x_{\ast}\delta_{r}(u))\leq \sigma r,\end{equation}
\begin{equation}\label{listvector1} \omega(\widetilde{E}_{\ast}-E_{\ast})\leq \min \{ \sigma, \, C_{\mathrm{m}}\Delta,\, A_{1},\, A_{2} \},\end{equation}
which is possible since all terms on the right side of the above inequalities are strictly positive.

Note $0<r<\Delta$ and recall $s=r/\Delta$ is rational. To construct $g$ we apply Lemma \ref{curveforalmostmax} with the following parameters:
\begin{itemize}
\item $\eta, r, \Delta$ and $u$ as defined above in \eqref{badpoint},
\item $x=\widetilde{x}_{\ast}$ and $E=\widetilde{E}_{\ast}$.
\end{itemize}
This gives a Lipschitz horizontal curve $g\colon \mathbb{R} \to \mathbb{G}_r$ which is a concatenation of horizontal lines such that:
\begin{enumerate}
\item $g(t)=\widetilde{x}_{\ast} + t\widetilde{E}_{\ast}(x_{\ast})$ for $|t|\geq s$,
\item $g(\zeta)=\widetilde{x}_{\ast}\delta_{r}(u)$, where $\zeta := r\langle u,\widetilde{E}_{\ast}(0)\rangle$,
\item $\mathrm{Lip}_{\mathbb{G}}(g)\leq 1+\eta\Delta$,
\item $g'(t)$ exists and $|(p\circ g)'(t) - p(\widetilde{E}_{\ast})| \leq C_{\mathrm{m}}\Delta$ for $t\in \mathbb{R}$ outside a finite set.
\end{enumerate}
Since the relevant quantities were rational and the set $N$ was chosen using Lemma \ref{uds}, we also know that the image of $g$ is contained in $N$.

\medskip

\emph{Construction of $h$.} 

\begin{claim}
There exists a Lipschitz horizontal curve $h\colon \mathbb{R}\to \mathbb{G}_r$ such that:
\[h(t)= \begin{cases} \widetilde{x}_{\ast} + t\widetilde{E}_{\ast}(\widetilde{x}_{\ast}) &\mbox{if }|t|\geq s,\\
x_{\ast} + tE_{\ast}(x_{\ast}) &\mbox{if }|t|\leq s/2.
\end{cases}\]
Also, $h$ satisfies the estimates:
\begin{itemize}
\item $\mathrm{Lip}_{\mathbb{G}}(h)\leq 1+\eta\Delta/2,$
\item the derivative $h'(t)$ exists for all but finitely many $t$ and satisfies the bound $|(p\circ h)'(t)-p(E_{\ast})|\leq \min \{A_{1}, A_{2}\}$.
\end{itemize}
\end{claim}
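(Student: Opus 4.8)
The plan is to construct $h$ as a concatenation of three curves: on $[-s/2, s/2]$ it is simply the line $x_{\ast} + tE_{\ast}(x_{\ast})$, and on each of the two outer regions $[s/2, s]$ and $[-s, -s/2]$ it interpolates between the endpoint of this line and the point $\widetilde{x}_{\ast} + tE_{\ast}(\widetilde{x}_{\ast})$, eventually matching the direction $\widetilde{E}_{\ast}$. Since the two outer pieces are symmetric, it suffices to treat $[s/2, s]$. The idea is exactly the perturbation argument behind Lemma \ref{curveforalmostmax}: on the interval $[s/2,s]$ we must join the known point $x_{\ast} + (s/2)E_{\ast}(x_{\ast})$ to the point $\widetilde{x}_{\ast} + s\widetilde{E}_{\ast}(x_{\ast})$ through a concatenation of horizontal lines, with Lipschitz constant at most $1+\eta\Delta/2$ and horizontal velocity within $\min\{A_1,A_2\}$ of $p(E_{\ast})$. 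In fact, after applying left group translations by $\pm (s/2)E_{\ast}(0)$ to center the relevant subcurve at the origin and reparametrizing, this reduces to precisely the kind of claim proved in Claim \ref{claimforlater}: joining $0$ to a point whose horizontal part is close to $(s/2)p(E_{\ast})$ and whose vertical part is small, in a way controlled by $\Delta$.

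The key estimates are as follows. By \eqref{nowlistingthese} and \eqref{listvector1}, the displacement between the two endpoints we must connect is small relative to $s$; in the horizontal coordinates the mismatch comes from $d(\widetilde{x}_{\ast}, x_{\ast})$ and from $\omega(\widetilde{E}_{\ast}-E_{\ast})\le \min\{A_1,A_2\}$, and in the vertical coordinates it is quadratically small in $r$, again by the bounds \eqref{lista} and \eqref{listvector1} together with the structure of the step $2$ group law. One then invokes Lemma \ref{curve} (exactly as in the construction of $\alpha$ in the proof of Claim \ref{claimforlater}) to produce a concatenation of horizontal lines on $[s/2,s]$ realizing this connection, and the bounds in Lemma \ref{curve}(2)--(3) translate — using $A_1=(\eta\Delta/C_{\mathrm a})^2$ and $s=r/\Delta$ — into $\mathrm{Lip}_{\bbG}\le 1+\eta\Delta/2$ and $|(p\circ h)'-p(E_{\ast})|\le \min\{A_1,A_2\}$. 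The choice of $A_1$ in \eqref{A1} is designed precisely so that the angle estimate Lemma \ref{closedirectioncloseposition}, applied later with $g$ and $h$, closes up; here we only need that the constructed velocity stays within $\min\{A_1,A_2\}$ of $p(E_{\ast})$.

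I would organize the proof of the claim into: (i) reduce to the single interval $[s/2,s]$ by symmetry and centering via left translation; (ii) compute, using \eqref{nowlistingthese}--\eqref{listvector1} and the group law \eqref{linestolines}, the horizontal and vertical coordinates of the target point relative to the source point, obtaining a horizontal mismatch of size $O(r)$ and vertical mismatches of size $O(r^2)$ (the $A$ and $B$ quantities); (iii) apply Lemma \ref{curve} to get a concatenation of horizontal lines realizing this, and use $s = r/\Delta$ together with $\Delta$ small (in particular $\Delta\le\eta v^2$ and the other constraints in parameter choice (4)) to simplify the bounds from Lemma \ref{curve}; (iv) reparametrize back and glue the three pieces, checking the junctions at $\pm s/2$ and $\pm s$ match in position and that the Lipschitz and velocity bounds hold on the whole of $\bbR$. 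The main obstacle is bookkeeping in step (ii)--(iii): one must verify that after substituting the bounds from \eqref{A1}, \eqref{A2}, \eqref{listvector1}, and $s=r/\Delta$, the Lemma \ref{curve} estimates genuinely collapse to $1+\eta\Delta/2$ and $\min\{A_1,A_2\}$ rather than merely $1+C\Delta$ and $C\Delta$; this requires tracking that the vertical mismatch is $O(r^2)=O(\Delta^2 s^2)$ so that the square-root terms in Lemma \ref{curve}(2) contribute only $O(\Delta^2)$, which is absorbed into $\eta\Delta/2$ by smallness of $\Delta$, and that the horizontal mismatch controlled by $A_1,A_2$ is already within the required window by construction. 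Since this is the step 2 analogue of the $h$-curve construction in \cite[Theorem 5.6]{PS16}, once the curve is built the remainder of the argument for Proposition \ref{almostmaximalityimpliesdifferentiability} proceeds verbatim as there.
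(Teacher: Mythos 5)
Your decomposition (line on $[-s/2,s/2]$, connector on $[s/2,s]$ and $[-s,-s/2]$, reduction to Claim \ref{claimforlater}) matches the paper's, but there is a genuine quantitative gap in step (iii): the required velocity bound is $\min\{A_1,A_2\}$ with $A_1=(\eta\Delta/C_{\mathrm a})^2$, which is \emph{quadratic} in $\Delta$ (and carries a factor $\eta^2$), whereas running Lemma \ref{curve}/Claim \ref{claimforlater} at the scale $(\Delta, r, s)$ you propose only delivers a deviation of order $\Delta$. Concretely, a vertical mismatch of size $O(r^2)=O(\Delta^2s^2)$ enters Lemma \ref{curve}(3) through the term $24P\sqrt{B}$, giving $O(\Delta s)$, and after reparametrizing to the interval $[s/2,s]$ of length $s/2$ this becomes a velocity deviation of order $\Delta$ --- not $O(\Delta^2)$, and nowhere near $(\eta\Delta/C_{\mathrm a})^2$. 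The same problem affects the horizontal offset: the listed conditions \eqref{nowlistingthese}--\eqref{listvector1} only force the positional mismatch between the two lines to be of order $\sigma r$, and dividing by the connector length $s/2$ does not land you inside the window $\min\{A_1,A_2\}$. So your claim that the estimates ``collapse to $\min\{A_1,A_2\}$'' is exactly the point that fails.

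The paper's fix, which your proposal is missing, is to decouple the connector's scale from $\Delta$: it introduces a fresh parameter $\Gamma<\Delta(1)$ chosen so that $(1+\Gamma)^2\leq 1+\eta\Delta/2$ and $C\Gamma(1+\Gamma)+\Gamma\leq\min\{A_1,A_2\}$, sets $\lambda=s\Gamma/2$, and then imposes an \emph{additional} requirement on the rational approximation --- beyond \eqref{nowlistingthese}--\eqref{listvector1} --- namely that the endpoint mismatch $(-s,0,\dots,0)\bigl(\widetilde{x}_{\ast}+s\widetilde{E}_{\ast}(\widetilde{x}_{\ast})\bigr)$ can be written as $\delta_{\lambda}(v)$ with $d(v)\leq 1$. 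This is legitimate only because $\widetilde{x}_{\ast},\widetilde{E}_{\ast}$ are selected after every quantity on which $\lambda$ depends, and it is what lets Claim \ref{claimforlater}, applied with $\Gamma,\lambda,s/2$ in place of $\Delta,r,s$, produce a connector whose velocity stays within $\min\{A_1,A_2\}$ of $p(E_{\ast})$. Without introducing this auxiliary scale and strengthening the approximation accordingly, your construction produces a curve that violates the second bullet of the Claim, and then Lemma \ref{closedirectioncloseposition} (whose application is the whole reason $A_1$ has the form $(\eta\Delta/C_{\mathrm a})^2$) no longer yields the distance estimate $C_{\mathrm a}\sqrt{A_1}\,|t|=\eta\Delta|t|$ needed downstream.
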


\begin{proof}[Proof of Claim]
Using left translations and Lemma \ref{existF}, we may start by assuming that $x_{\ast}=0$ and $E_{\ast}=X_1$. Clearly $h(t)$ is defined explicitly and satisfies the required conditions for $|t|\leq s/2$ and $|t|\geq s$. We now show how to construct $h(t)$ for $s/2<t<s$. The case $-s<t<-s/2$ is essentially identical.

Recall $\Delta(1)$ and $C$ from Claim \ref{claimforlater}. Choose $0<\Gamma<\Delta(1)$ which satisfies
\[(1+\Gamma)^{2}\leq 1+\eta\Delta/2\]
and
\[C\Gamma(1+\Gamma)+\Gamma \leq \max \{A_{1}, A_{2}\}.\]
Define $\lambda=s\Gamma/2<\Gamma$. Choose $v\in \bbG_{r}$ with $d(v)\leq 1$ and $|v_{i}|\leq 1$ for $i=1, \ldots, n$, such that
\[\delta_{\lambda}(v)=(-s,0,\ldots, 0)(\widetilde{x_{\ast}}+s\widetilde{E}_{\ast}(\widetilde{x}_{\ast})).\]
This is possible if the rational approximation introduced earlier is sufficiently good; note that the rational approximation was introduced after all quantities upon which $\lambda$ depends. We now apply Claim \ref{claimforlater} with
\begin{itemize}
\item $\eta=1$ and $\Delta$ replaced by $\Gamma$,
\item $r$ replaced by $\lambda$ and $s$ replaced by $s/2$,
\item $u$ replaced by $v$,
\item $\zeta$ replaced by $\lambda v_{1}$.
\end{itemize}
We obtain a Lipschitz horizontal curve $\varphi: [0,s/2+\lambda v_{1}]\to \mathbb{G}_r$ such that:
\begin{itemize}
\item $\varphi(0)=0$,
\item $\varphi(s/2+\lambda v_{1})=(-s/2, 0, \ldots, 0)(\widetilde{x_{\ast}}+s\widetilde{E}_{\ast}(\widetilde{x}_{\ast}))$,
\item $\mathrm{Lip}_{\bbG}(\varphi)\leq 1+\Gamma$,
\item $\varphi'(t)$ exists and $|(p\circ \varphi)'(t)-(1,0,\ldots, 0)|\leq C\Gamma$ for all but finitely many $t\in [0,s/2+\lambda v_{1}]$.
\end{itemize}
Then $\widetilde{\varphi}:[0,1]\to \mathbb{G}_r$ defined by $\widetilde{\varphi}(t)=\varphi((s/2+\lambda v_{1})t)$ is a Lipschitz horizontal curve such that:
\begin{itemize}
\item $\widetilde{\varphi}(0)=0$,
\item $\widetilde{\varphi}(1)=(-s/2, 0, \ldots, 0)(\widetilde{x_{\ast}}+s\widetilde{E}_{\ast}(\widetilde{x}_{\ast}))$,
\item $\mathrm{Lip}_{\bbG}(\widetilde{\varphi})\leq (1+\Gamma)(s/2+\lambda v_{1})$,
\item $\widetilde{\varphi}'(t)$ exists and $|(p\circ \widetilde{\varphi})'(t)-(s/2+\lambda v_{1}, 0, \ldots, 0)|\leq C\Gamma(s/2+\lambda v_{1})$ for all but finitely many $t\in [0,1]$.
\end{itemize}
Define $h_1 :[s/2,s]\to \mathbb{G}_r$ by:
\[h_1(t)=(s/2, 0, \ldots, 0) \widetilde{\varphi}((2/s)(t-s/2)).\]
Then $h_1$ is a Lipschitz horizontal curve which satisfies $h_{1}(s/2)=(s/2, 0, \ldots, 0)$ and $h_{1}(s)=\widetilde{x_{\ast}}+s\widetilde{E}_{\ast}(\widetilde{x}_{\ast})$. Further:
\begin{align*}
\mathrm{Lip}_{\bbG}(h_1)&\leq \frac{2(1+\Gamma)(s/2+\lambda v_{1})}{s}\\
& \leq \frac{2(1+\Gamma)(s/2+\lambda)}{s}\\
&\leq (1+\Gamma)^{2}\\
&\leq 1+\eta\Delta/2.
\end{align*}
Clearly, for all but finitely many $t\in [s/2,s]$:
\[|(p\circ h_1)'(t)-(1+2\lambda v_{1}/s, 0, \ldots, 0)| \leq C\Gamma (1+2\lambda v_{1}/s).\]
This implies:
\begin{align*}
|(p\circ h_1)'(t)-(1, 0, \ldots, 0)| &\leq C\Gamma(1+2\lambda v_{1}/s) + 2|\lambda v_{1}|/s\\
&\leq C\Gamma(1+\Gamma)+\Gamma\\
&\leq \max \{A_{1}, A_{2}\}.
\end{align*}

The curve $h_{1}(t)$ defines $h(t)$ for $t \in [s/2,s]$ with the desired properties. The construction of $h(t)$ for $t\in [-s,-s/2]$ is essentially identical.
\end{proof}

Using the constructed curves $g$ and $h$, together with Lemma \ref{closedirectioncloseposition}, the rest of the proof is identical to the Heisenberg group case treated in \cite[Theorem 5.6]{PS16}.
\end{proof}

Proposition \ref{DoreMaleva} below is a generalization of \cite[Proposition 6.1]{PS16} in the Heisenberg group, itself based on \cite[Theorem 3.1]{DM11} in Euclidean spaces, to free Carnot groups of step 2. We omit the proof since it is identical to \cite[Proposition 6.1]{PS16}, replacing an application of \cite[Lemma 5.1]{PS16} in the Heisenberg group with Lemma \ref{closedirectioncloseposition} in the step 2 free Carnot group.

\begin{proposition}\label{DoreMaleva}
Suppose $f_0:\bbG_{r}\to \bbR$ is a Lipschitz function, $(x_0,E_0)\in D^{f_0}$ and $\delta_0, \mu, K>0$. Then there is a Lipschitz function $f:\bbG_{r}\to \bbR$ such that $f-f_0$ is $\bbG$-linear with $\mathrm{Lip}_{\bbG}(f-f_{0})\leq \mu$, and a pair $(x_{\ast},E_{\ast})\in D^{f}$ with $d(x_{\ast},x_0)\leq \delta_0$ such that $E_{\ast}f(x_{\ast})>0$ is almost locally maximal in the following sense.

For any $\varepsilon>0$ there is $\delta_{\varepsilon}>0$ such that whenever $(x,E)\in D^{f}$ satisfies both:
\begin{enumerate}
\item $d(x,x_{\ast})\leq \delta_{\varepsilon}$, $Ef(x)\geq E_{\ast}f(x_{\ast})$,
\item for any $t\in (-1,1)$:
\begin{align*}
&|(f(x+tE_{\ast}(x))-f(x))-(f(x_{\ast}+tE_{\ast}(x_{\ast}))-f(x_{\ast}))|\\
& \qquad \leq K|t| ( Ef(x)-E_{\ast}f(x_{\ast}))^{\frac{1}{4}},
\end{align*}
\end{enumerate}
then:
\[Ef(x)<E_{\ast}f(x_{\ast})+\varepsilon.\]
\end{proposition}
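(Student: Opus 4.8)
\textbf{Proof proposal for Proposition \ref{DoreMaleva}.}

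The plan is to follow the scheme of Dor\'e--Maleva \cite{DM11} and its Heisenberg adaptation \cite[Proposition 6.1]{PS16}, with the only genuinely group-specific input being replaced by Lemma \ref{closedirectioncloseposition}. First I would set up an iterative construction producing a sequence of pairs $(x_k,E_k)\in D^{f_k}$, where $f_k$ differs from $f_0$ by a $\bbG$-linear perturbation of small Lipschitz norm. At stage $k$ one considers the supremum of $Ef_k(x)$ over pairs $(x,E)$ lying in a shrinking neighbourhood of $(x_k,E_k)$ and satisfying the auxiliary ``difference quotient'' constraint with constant $K$; one then picks $(x_{k+1},E_{k+1})$ realizing this supremum up to a small error $2^{-k}$, and adds a linear perturbation $\ell_k$ (of the form $x\mapsto c_k\langle x,E_{k+1}(0)\rangle - c_k\langle x,E_k(0)\rangle$, using Lemma \ref{lemmascalarlip}) chosen so that the new directional derivative $E_{k+1}f_{k+1}(x_{k+1})$ strictly dominates all competitors by a definite margin. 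The magnitudes of the perturbations and the neighbourhood radii are chosen summable so that $f_k\to f$ uniformly with $f-f_0$ $\bbG$-linear and $\mathrm{Lip}_{\bbG}(f-f_0)\leq\mu$, and so that $(x_k,E_k)$ is Cauchy; here one uses that $|c_k|$ is controlled by $\omega(E_{k+1}-E_k)$, which in turn is small because the constrained supremum of directional derivatives is approached. The limit $(x_\ast,E_\ast)$ then lies in $D^f$ (directional derivatives pass to the limit along the fixed horizontal line once the perturbations stabilize the linear part), $d(x_\ast,x_0)\leq\delta_0$, and $E_\ast f(x_\ast)>0$ provided $E_0f_0(x_0)>0$, which we may arrange by a harmless initial linear perturbation.

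The key step is verifying the almost-maximality conclusion. Given $\varepsilon>0$, fix $k$ large with $2^{-k}<\varepsilon/2$ and the cumulative tail of perturbations past stage $k$ also $<\varepsilon/2$ in its effect on directional derivatives. Suppose $(x,E)\in D^f$ satisfies (1) and (2) with $d(x,x_\ast)\leq\delta_\varepsilon$ for $\delta_\varepsilon$ small. One transfers the constraint (2), stated in terms of $f$, back to a constraint in terms of $f_k$ using that $f-f_k$ is $\bbG$-linear and has tiny Lipschitz norm, so that $(x,E)$ (or a slight modification of it) is an admissible competitor in the stage-$k$ neighbourhood with the stage-$k$ constant. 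Since at stage $k$ we chose $(x_{k+1},E_{k+1})$ to beat every such competitor by a margin exceeding $2^{-k}$ plus the tail, we conclude $Ef(x) < E_{k+1}f_{k+1}(x_{k+1}) + (\text{tail}) \leq E_\ast f(x_\ast)+\varepsilon$. Here is exactly where Lemma \ref{closedirectioncloseposition} enters: to know that $x$ lies in the stage-$k$ spatial neighbourhood, and more importantly to convert the hypothesis that the \emph{difference quotients along $E_\ast$} are close into the statement that $x$ is close to a competitor admissible for the stage-$k$ selection, one bounds $d$ between two horizontal curves (the line through $x$ in direction $E$ and the line through $x_\ast$ in direction $E_\ast$, or the relevant perturbations thereof) by $C_{\mathrm{a}}\sqrt{A}\,|t-c|$ where $A$ controls the discrepancy of their horizontal velocities; this is the step where the Heisenberg estimate \cite[Lemma 5.1]{PS16} is used in \cite{PS16} and where we now invoke its step-2 free analogue.

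The main obstacle is purely bookkeeping: matching the quantitative thresholds so that the constrained-supremum margins at each stage genuinely dominate both the $2^{-k}$ selection error and the tail of future perturbations, while simultaneously keeping $\mathrm{Lip}_{\bbG}(f-f_0)\leq\mu$ and $d(x_\ast,x_0)\leq\delta_0$. None of this is conceptually new beyond \cite{PS16}, and since the coordinate representation of $\bbG_r$ in Definition \ref{freegroupstep2} is a direct generalization of the Heisenberg presentation, every estimate in the \cite{PS16} argument goes through verbatim once Lemma \ref{closedirectioncloseposition} is substituted for \cite[Lemma 5.1]{PS16}. For this reason I would, as the authors do, carry out the construction in detail only up to the point where these substitutions occur and then refer to \cite[Proposition 6.1]{PS16} for the remainder.
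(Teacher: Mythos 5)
Your proposal is correct and follows essentially the same route as the paper: the authors likewise omit the details and state that the argument is identical to \cite[Proposition 6.1]{PS16}, with the single group-specific ingredient \cite[Lemma 5.1]{PS16} replaced by Lemma \ref{closedirectioncloseposition}. Your sketch of the underlying Dor\'e--Maleva iteration accurately describes what that cited proof does, so there is nothing to add.
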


Proposition \ref{DoreMaleva} shows that for any Lipschitz function $f_{0}\colon \bbG_{r} \to \bbR$ there is a Lipschitz function $f\colon \bbG_{r} \to \bbR$ such that $f-f_{0}$ is $\bbG$-linear and $f$ has an almost maximal directional derivative in the sense of Proposition \ref{almostmaximalityimpliesdifferentiability}. Notice that if $f-f_{0}$ is $\bbG$-linear then $D^{f}=D^{f_{0}}$ and the functions $f, f_{0}$ have the same points of Pansu differentiability. Since Proposition \ref{almostmaximalityimpliesdifferentiability} asserts that an almost maximal directional derivative suffices to differentiability, we deduce that any Lipschitz function $f_{0}\colon \bbG_{r}\to \bbR$ is Pansu differentiable at a point of $N$.

\begin{theorem}\label{step2freeUDS}
Any free Carnot group of step 2 contains a UDS of CC-Hausdorff dimension one.
\end{theorem}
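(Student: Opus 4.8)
The plan is to assemble the pieces already developed in Section~\ref{sectionUDS} into a single statement. Fix a free Carnot group $\bbG_{r}$ of step $2$ and let $N\subset\bbG_{r}$ be the $G_{\delta}$ set constructed in Lemma~\ref{uds}: it contains the images of all curves from Lemma~\ref{curveforalmostmax} with rational data, and it has CC-Hausdorff dimension one. I claim $N$ is a UDS. Let $f_{0}\colon\bbG_{r}\to\bbR$ be an arbitrary Lipschitz function; we must produce a point of $N$ at which $f_{0}$ is Pansu differentiable.

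First I would reduce to the normalized case. Pansu differentiability is unaffected by multiplying $f_{0}$ by a positive constant, so after rescaling we may assume $\mathrm{Lip}_{\bbG}(f_{0})\leq 1/2$, which is the hypothesis needed in Proposition~\ref{almostmaximalityimpliesdifferentiability}. Next, since $f_{0}$ is Pansu differentiable almost everywhere by Theorem~\ref{pansutheorem} and directional derivatives certainly exist at such points, and since $N$ contains horizontal line segments, one checks (exactly as in \cite{PS16}) that $D^{f_{0}}$ is nonempty; pick any $(x_{0},E_{0})\in D^{f_{0}}$. Now apply Proposition~\ref{DoreMaleva} with this pair, with $\mu>0$ arbitrary, with $K=6$ (the constant appearing in the definition of the set $M$ in Proposition~\ref{almostmaximalityimpliesdifferentiability}), and with $\delta_{0}>0$ arbitrary. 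This yields a Lipschitz function $f\colon\bbG_{r}\to\bbR$ with $f-f_{0}$ being $\bbG$-linear, together with a pair $(x_{\ast},E_{\ast})\in D^{f}=D^{f_{0}}$ for which $E_{\ast}f(x_{\ast})>0$ and which is almost locally maximal in the precise sense stated there: for every $\varepsilon>0$ there is $\delta_{\varepsilon}>0$ such that any $(x,E)\in D^{f}$ with $d(x,x_{\ast})\leq\delta_{\varepsilon}$, $Ef(x)\geq E_{\ast}f(x_{\ast})$, and the difference-quotient bound with constant $K=6$ satisfies $Ef(x)<E_{\ast}f(x_{\ast})+\varepsilon$.

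The conclusion of Proposition~\ref{DoreMaleva} is exactly the hypothesis of Proposition~\ref{almostmaximalityimpliesdifferentiability}: the set $M$ there is the set of $(x,E)\in D^{f}$ with $Ef(x)\geq E_{\ast}f(x_{\ast})$ satisfying the difference-quotient inequality with constant $6$, and almost local maximality says precisely that $\limsup_{\delta\downarrow 0}\sup\{Ef(x)\colon (x,E)\in M,\ d(x,x_{\ast})\leq\delta\}\leq E_{\ast}f(x_{\ast})$. (One does need $\mathrm{Lip}_{\bbG}(f)\leq 1/2$ here as well; since $f-f_{0}$ is $\bbG$-linear with $\mathrm{Lip}_{\bbG}(f-f_{0})\leq\mu$, choosing $\mu$ small enough at the outset keeps $\mathrm{Lip}_{\bbG}(f)\leq 1/2$.) Hence Proposition~\ref{almostmaximalityimpliesdifferentiability} applies and $f$ is Pansu differentiable at $x_{\ast}$. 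Finally, since $f-f_{0}$ is $\bbG$-linear, $f$ and $f_{0}$ have the same points of Pansu differentiability, so $f_{0}$ is Pansu differentiable at $x_{\ast}\in N$. As $f_{0}$ was arbitrary, $N$ is a UDS of CC-Hausdorff dimension one.

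There is no real obstacle left at this stage: the substantive work has already been carried out in the geometric lemmas (Lemma~\ref{closedirectioncloseposition} and Lemma~\ref{curveforalmostmax}), in the construction of $N$ (Lemma~\ref{uds}), and in Propositions~\ref{almostmaximalityimpliesdifferentiability} and~\ref{DoreMaleva}, whose proofs are said to follow \cite{PS16} verbatim once these step~2 lemmas are in place. The only point requiring a little care is the bookkeeping of constants—namely that the $K$ supplied to Proposition~\ref{DoreMaleva} matches the constant $6$ hard-coded into the definition of $M$ in Proposition~\ref{almostmaximalityimpliesdifferentiability}, and that the $\bbG$-linear perturbation $f-f_{0}$ is taken small enough to preserve the normalization $\mathrm{Lip}_{\bbG}(f)\leq 1/2$. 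Both are arranged simply by choosing the free parameters $\mu$, $\delta_{0}$ appropriately when invoking Proposition~\ref{DoreMaleva}.
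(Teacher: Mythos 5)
Your proposal is correct and follows exactly the route the paper takes: the theorem is just the assembly of Lemma \ref{uds} (the set $N$), Proposition \ref{DoreMaleva} (existence of an almost locally maximal directional derivative after a $\bbG$-linear perturbation), and Proposition \ref{almostmaximalityimpliesdifferentiability} (almost maximality implies Pansu differentiability), together with the observation that a $\bbG$-linear perturbation does not change the set of differentiability points. Your extra bookkeeping --- rescaling so that $\mathrm{Lip}_{\bbG}(f)\leq 1/2$ survives the perturbation, matching $K=6$ (which works since $6\,\mathrm{Lip}_{\bbG}(f)^{1/4}\leq 6$), and noting that $D^{f_0}\neq\emptyset$ because $N$ contains horizontal segments on which the one-dimensional Rademacher theorem applies --- is all consistent with what the paper leaves implicit.
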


\section{Generalization to more general Carnot groups}\label{morecarnotgroups}

In this section we see how to generalize results proved for free Carnot groups of step 2 to general Carnot groups of step 2.

To do this we suppose $\bbG$ and $\bbH$ are Carnot groups with horizontal layers $V$ and $W$ for which there exist bases $\mathbf{X}=(X_{1}, \ldots, X_{r})$ and $\mathbf{Y}=(Y_{1}, \ldots, Y_{r})$, together with a Lie group homomorphism $F\colon \bbG \to \bbH$ such that $F_{*}(X_{i})=Y_{i}$ for $1\leq i\leq r$. 

Note that if $\bbH$ is any Carnot group of rank $r$ and step $s$, then one possibility is to let $\bbG$ be the free Carnot group of the same rank and step. Indeed, suppose $X_{1}, \ldots, X_{r}$ and $Y_{1}, \ldots, Y_{r}$ are bases of the horizontal layers of $\bbG$ and $\bbH$ respectively. Since the Lie algebra of $\bbG$ is free-nilpotent (Definition \ref{freeliealgebra}), there exists a Lie algebra homomorphism $\Phi$ such that $\Phi(X_{i})=Y_{i}$. Since Carnot groups are simply connected, there exists a Lie group homomorphism $F\colon \bbG \to \bbH$ such that $dF=\Phi$, which has the properties required.

Equip $\bbG$ and $\bbH$ with the CC metrics $d_{\bbG}$ and $d_{\bbH}$ induced by the bases $\mathbf{X}$ and $\mathbf{Y}$ respectively. The map $F$ preserves lengths of horizontal curves, so is Lipschitz with $\mathrm{Lip}(F)= 1$. 

Let $p_{\bbG}\colon \bbG\to \bbR^{r}$ and $p_{\bbH}\colon \bbH \to \bbR^{r}$ denote projections onto the first $r$ coordinates. It follows from the definition of $F$ that if $u=(u_{h},0)\in \bbG$ for some $u_{h}\in \bbR^{r}$ then $F(u)=(u_{h},0)\in \bbH$ (note though that $\bbG$ and $\bbH$ may be represented in coordinates by Euclidean spaces of different dimensions). Also, $p_{\bbH}(F(u))=p_{\bbG}(u)$ for any $u\in \bbG$.\\

The following lemma is well-known, but we include its short proof for
completeness.

\begin{lemma}\label{lift}
Let $x\in \bbG$ and $\tilde{x}=F(x)\in \bbH$. Then for any $\tilde{y}\in \bbH$, there is $y\in \bbG$ with $F(y)=\tilde{y}$ and $d_{\bbG}(x,y)=d_{\bbH}(\tilde{x}, \tilde{y})$.
\end{lemma}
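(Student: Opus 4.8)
The plan is to lift an optimal (or near-optimal) horizontal curve from $\bbH$ to $\bbG$ and show the lift realizes the same length, so no length is lost by passing to $\bbG$; combined with the fact that $F$ is $1$-Lipschitz this gives equality of distances. First I would fix $x\in \bbG$ and $\tilde x=F(x)$, and note that $F$ being $1$-Lipschitz already gives $d_{\bbH}(\tilde x,\tilde y)\le d_{\bbG}(x,y)$ for every preimage $y$ of $\tilde y$; the content is to produce one preimage $y$ for which the reverse inequality also holds. To this end, recall from Remark \ref{horizcurve} that $d_{\bbH}(\tilde x,\tilde y)$ can be approximated (indeed attained, but approximation suffices) by lengths of piecewise-linear horizontal curves $\tilde\gamma\colon[0,1]\to\bbH$ from $\tilde x$ to $\tilde y$; write $\tilde\gamma'(t)=\sum_{i=1}^r u_i(t)\,Y_i(\tilde\gamma(t))$ with $u=(u_1,\dots,u_r)\in L^1[0,1]$.

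Next I would lift $\tilde\gamma$ to $\bbG$: since $X_1,\dots,X_r$ span the horizontal layer $V$ of $\bbG$ and these are complete left-invariant vector fields, there is a unique absolutely continuous $\gamma\colon[0,1]\to\bbG$ with $\gamma(0)=x$ and $\gamma'(t)=\sum_{i=1}^r u_i(t)\,X_i(\gamma(t))$. By construction $\gamma$ is a horizontal curve in $\bbG$ with the same control $u$, hence $L_{\bbG}(\gamma)=\int_0^1|u|=L_{\bbH}(\tilde\gamma)$. Because $F_*X_i=Y_i$ and $F$ is a group homomorphism, $F\circ\gamma$ solves the same ODE in $\bbH$ as $\tilde\gamma$ with the same initial point $F(x)=\tilde x$, so $F\circ\gamma=\tilde\gamma$ by uniqueness; in particular $F(\gamma(1))=\tilde\gamma(1)=\tilde y$. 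Setting $y=\gamma(1)$ we get a preimage of $\tilde y$ with $d_{\bbG}(x,y)\le L_{\bbG}(\gamma)=L_{\bbH}(\tilde\gamma)$.

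Finally I would close the argument. Choosing a sequence $\tilde\gamma_k$ with $L_{\bbH}(\tilde\gamma_k)\to d_{\bbH}(\tilde x,\tilde y)$ produces preimages $y_k$ with $d_{\bbG}(x,y_k)\le L_{\bbH}(\tilde\gamma_k)$. One must be slightly careful that the single $y$ in the statement is one point, not a sequence: since $d_\bbH(\tilde x,\tilde y)=d_\bbG(x,y_k)$ would fail only if $d_\bbG(x,y_k)<d_\bbH(\tilde x,\tilde y)$, which is impossible as $F$ is $1$-Lipschitz and $F(y_k)=\tilde y$, we actually get $d_\bbG(x,y_k)=d_\bbH(\tilde x,\tilde y)$ for \emph{every} $k$ with $L_\bbH(\tilde\gamma_k)$ close enough; in fact any single near-optimal $\tilde\gamma$ already yields $d_\bbH(\tilde x,\tilde y)\le d_\bbG(x,y)\le L_\bbH(\tilde\gamma)$, and letting $L_\bbH(\tilde\gamma)\downarrow d_\bbH(\tilde x,\tilde y)$ forces $d_\bbG(x,y)=d_\bbH(\tilde x,\tilde y)$ in the limit — so one should take a minimizing $\tilde\gamma$ if it exists (CC metrics on Carnot groups are geodesic), giving $y$ directly. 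The one genuine point requiring care — the ``main obstacle,'' though a mild one — is justifying existence and uniqueness of the lift $\gamma$ and the identity $F\circ\gamma=\tilde\gamma$: this is the standard fact that a Lie group homomorphism intertwines the flows of $F$-related left-invariant vector fields, together with existence of solutions to the lifted control ODE, which holds because $\bbG$ is simply connected and the $X_i$ are complete.
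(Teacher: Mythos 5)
Your proposal is correct and is essentially the paper's own argument: lift a length-minimizing horizontal curve from $\bbH$ to $\bbG$ via the same controls, use $F_{*}X_{i}=Y_{i}$ to see the lift projects back to the original curve (so its endpoint is a preimage of $\tilde y$ realizing $d_{\bbG}(x,y)\leq d_{\bbH}(\tilde x,\tilde y)$), and get the reverse inequality from $F$ being $1$-Lipschitz, which the paper obtains by pushing forward a minimizing curve in $\bbG$. The only wobble is the mid-paragraph claim that $d_{\bbG}(x,y_k)=d_{\bbH}(\tilde x,\tilde y)$ already holds for $k$ large (it does not follow from $d_{\bbG}(x,y_k)\leq L_{\bbH}(\tilde\gamma_k)$ alone), but you correctly resolve this by taking an actual geodesic, exactly as the paper does.
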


\begin{proof}
Fix a horizontal curve $\gamma\colon [0,1]\to \bbH$ joining $\tilde{x}$ to $\tilde{y}$ with $L(\gamma)=d_{\bbH}(\tilde{x},\tilde{y})$. Choose integrable controls $a_{i}\in L^{1}([0,1])$ such that
\[ \gamma'(t)=\sum_{i=1}^{r}a_{i}(t)Y_{i}(\gamma(t)) \qquad \mbox{for a.e. }t\in [0,1].\] 
Define a Lipschitz horizontal curve $\phi \colon [0,1]\to \bbG$ by
\[\phi(0)=x \qquad \mbox{and}\qquad \phi'(t)=\sum_{i=1}^{r}a_{i}(t)X_{i}(\phi(t))\qquad \mbox{for a.e. }t\in [0,1].\] 
Clearly $L(\phi)=L(\gamma)$ because $\mathbf{X}$ and $\mathbf{Y}$ are orthonormal bases of $V$ and $W$. Let $y:=\phi(1)$. Since $F_{*}(X_{i})=Y_{i}$, we have $F\circ \phi=\gamma$ and hence $F(y)=\tilde{y}$. Since $\phi$ joins $x$ to $y$, we have
\[d_{\bbG}(x,y)\leq L(\phi)=L(\gamma)=d_{\bbH}(\tilde{x},\tilde{y}).\]
Now take a horizontal curve $\psi\colon [0,1]\to \bbG$ joining $x$ to $y$ with $L(\psi)=d_{\bbG}(x,y)$. Then $F\circ \psi \colon [0,1]\to \bbH$ is a horizontal curve joining $\tilde{x}$ to $\tilde{y}$ and $L(F\circ \psi)=L(\psi)$. Hence
\[d_{\bbH}(\tilde{x},\tilde{y})\leq L(F\circ \psi)=L(\psi)=d_{\bbG}(x,y).\]
We conclude that $d_{\bbH}(\tilde{x},\tilde{y})=d_{\bbG}(x,y)$.
\end{proof}

\begin{proposition}\label{quotientdiffCC}
If the CC distance in $\bbG$ is differentiable in horizontal directions then the CC distance in $\bbH$ is differentiable in horizontal directions.
\end{proposition}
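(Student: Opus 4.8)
The strategy is to transfer differentiability of the CC distance from $\bbG$ to $\bbH$ using the homomorphism $F$ and the lifting property from Lemma \ref{lift}. Fix $\tilde u = \exp E \in \bbH$ with $E \in W\setminus\{0\}$; we must show $d_{\bbH}$ is Pansu differentiable at $\tilde u$. Since $F_*$ maps the basis $\mathbf X$ to $\mathbf Y$, there is $u\in\bbG$ of the form $u = \exp \hat E$ with $\hat E\in V\setminus\{0\}$ and $F(u)=\tilde u$ (one just takes $\hat E = \sum a_i X_i$ if $E = \sum a_i Y_i$; note $F(u)=(u_h,0)$ as recorded before the lemma, and $\omega_{\bbG}(\hat E)=\omega_{\bbH}(E)$, so in particular $d_{\bbG}(u)=d_{\bbH}(\tilde u)$). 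By hypothesis $d_{\bbG}$ is Pansu differentiable at $u$, and by Remark \ref{derivativeformula} its differential is $z\mapsto \langle p_{\bbG}(z), p_{\bbG}(u)/d_{\bbG}(u)\rangle$.

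**Key steps.** First I would establish the upper bound: for $\tilde z\in\bbH$, use Lemma \ref{lift} (with $x=u$, $\tilde x = \tilde u$) to find $z\in\bbG$ with $F(z)=\tilde z$ and $d_{\bbG}(u,z)=d_{\bbH}(\tilde u,\tilde z)$; since $F$ is a homomorphism, $F(uz)=\tilde u\tilde z$ and so, using $\mathrm{Lip}(F)=1$, $d_{\bbH}(\tilde u\tilde z)=d_{\bbH}(F(uz))\le d_{\bbG}(uz)$. Hence
\[
d_{\bbH}(\tilde u\tilde z) - d_{\bbH}(\tilde u) \le d_{\bbG}(uz) - d_{\bbG}(u) \le \langle p_{\bbG}(z), p_{\bbG}(u)/d_{\bbG}(u)\rangle + o(d_{\bbG}(z)).
\]
Now $p_{\bbG}(z)=p_{\bbH}(F(z))=p_{\bbH}(\tilde z)$ and $p_{\bbG}(u)=p_{\bbH}(\tilde u)$, while $d_{\bbG}(z)=d_{\bbG}(u^{-1}(uz))=d_{\bbH}(\tilde u^{-1}(\tilde u\tilde z))=d_{\bbH}(\tilde z)$ because $F$ preserves distances between $u,uz$ (this is exactly the content of the chosen lift, applied with base point $u$); alternatively $d_{\bbG}(z)\ge d_{\bbH}(\tilde z)$ from $\mathrm{Lip}(F)=1$ suffices to control the error term. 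This gives $d_{\bbH}(\tilde u\tilde z)-d_{\bbH}(\tilde u)\le \langle p_{\bbH}(\tilde z), p_{\bbH}(\tilde u)/d_{\bbH}(\tilde u)\rangle + o(d_{\bbH}(\tilde z))$.

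**Lower bound and conclusion.** The matching lower bound is free: Lemma \ref{distanceinequality}, which holds in any Carnot group, gives directly
\[
d_{\bbH}(\tilde u\tilde z) \ge d_{\bbH}(\tilde u) + \langle p_{\bbH}(\tilde z), p_{\bbH}(\tilde u)/d_{\bbH}(\tilde u)\rangle
\]
since $\tilde u=\exp E$ with $E\in W$. Combining the two bounds shows $d_{\bbH}$ is Pansu differentiable at $\tilde u$ with differential $\tilde z\mapsto \langle p_{\bbH}(\tilde z), p_{\bbH}(\tilde u)/d_{\bbH}(\tilde u)\rangle$ (one should note this map is $\bbH$-linear, which it is, being of the form $\tilde z\mapsto\langle p_{\bbH}(\tilde z),V\rangle$). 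Since $\tilde u$ was an arbitrary point of the form $\exp E$, $E\in W\setminus\{0\}$, the CC distance in $\bbH$ is differentiable in horizontal directions.

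**Main obstacle.** The only delicate point is handling the error term $o(d_{\bbG}(z))$ correctly after the change of variables: one must be sure that $d_{\bbG}(z)$ stays comparable to $d_{\bbH}(\tilde z)$ so that "little-o in $d_{\bbG}(z)$" translates to "little-o in $d_{\bbH}(\tilde z)$", and that $d_{\bbG}(z)\to 0$ as $\tilde z\to 0$. This is why I take $z$ via the distance-preserving lift of Lemma \ref{lift} rather than an arbitrary preimage: with that choice $d_{\bbG}(u,z)=d_{\bbH}(\tilde u,\tilde z)\to 0$, hence $d_{\bbG}(z)=d_{\bbG}(u^{-1}uz)$ is controlled; combined with $d_{\bbG}(z)\ge d_{\bbH}(\tilde z)\ge C_K^{-1}d_{\bbG}(z)$ — or simply with the Lipschitz bound $d_{\bbG}(z)\ge d_{\bbH}(\tilde z)$ in one direction and the reverse from the lift — the equivalence is immediate. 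Everything else is a direct assembly of results already available in the excerpt.
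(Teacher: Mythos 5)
Your overall strategy coincides with the paper's: lift $\tilde z$ to $\bbG$ via Lemma \ref{lift}, use $\mathrm{Lip}(F)=1$ and Remark \ref{derivativeformula} for the upper bound, and quote Lemma \ref{distanceinequality} for the matching lower bound. However, the one step you yourself flag as the main obstacle --- arranging that $d_{\bbG}(z)$ is comparable to $d_{\bbH}(\tilde z)$ --- is handled incorrectly. You apply Lemma \ref{lift} with base point $x=u$ and target $\tilde z$, which yields $d_{\bbG}(u,z)=d_{\bbH}(\tilde u,\tilde z)$. This quantity does \emph{not} tend to $0$ as $\tilde z\to 0$; it tends to $d_{\bbH}(\tilde u)=1$. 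Consequently $z$ need not be close to the origin of $\bbG$ at all: a preimage of $\tilde z$ under $F$ can differ from the ``obvious'' one by an arbitrary element of $\ker F$, and your lift only pins $z$ down relative to $u$, not relative to $0$. The chain $d_{\bbG}(z)=d_{\bbG}(u^{-1}(uz))=d_{\bbH}(\tilde u^{-1}(\tilde u\tilde z))$ silently replaces $d_{\bbG}(u,z)$ by $d_{\bbG}(u,uz)$, which is a different quantity and is not controlled by your lift. The fallbacks do not repair this: the Lipschitz bound gives $d_{\bbH}(\tilde z)\le d_{\bbG}(z)$, which is the wrong direction (to convert $o(d_{\bbG}(z))$ into $o(d_{\bbH}(\tilde z))$ you need an \emph{upper} bound on $d_{\bbG}(z)$ and you need $d_{\bbG}(z)\to 0$ so that the differentiability estimate in $\bbG$ applies at all), and the asserted two-sided comparison $d_{\bbH}(\tilde z)\ge C_K^{-1}d_{\bbG}(z)$ has no justification --- the Kor\'anyi equivalence is internal to one group and says nothing about an arbitrary $F$-preimage.

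The fix is a one-line change and is exactly what the paper does: apply Lemma \ref{lift} with base point $x=0$, $\tilde x=F(0)=0$ and target $\tilde y=\tilde z$, obtaining $z$ with $F(z)=\tilde z$ and $d_{\bbG}(z)=d_{\bbG}(0,z)=d_{\bbH}(0,\tilde z)=d_{\bbH}(\tilde z)$. Then $d_{\bbG}(z)\to 0$ as $\tilde z\to 0$ and $o(d_{\bbG}(z))=o(d_{\bbH}(\tilde z))$ exactly, and the rest of your argument (including the lower bound from Lemma \ref{distanceinequality} and the identification of the limit as an $\bbH$-linear map) goes through verbatim. Alternatively, lifting the point $\tilde u\tilde z$ based at $u$ to get $w$ and setting $z:=u^{-1}w$ would also give $d_{\bbG}(z)=d_{\bbH}(\tilde z)$; but the lift of $\tilde z$ based at $u$, as written, does not.
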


\begin{proof}
Let $\tilde{u}=(u_{h},0)\in \bbH$ for some $u_{h}\in \bbR^{r}$. Let $u=(u_{h},0)\in \bbG$, which satisfies $F(u)=\tilde{u}$ and $d_{\bbG}(u)=d_{\bbH}(\tilde{u})$. Given $\tilde{z}\in \bbH$, use Lemma \ref{lift} to find $z\in \bbG$ such that $F(z)=\tilde{z}$ and $d_{\bbG}(z)=d_{\bbH}(\tilde{z})$. Since $F$ is a group homomorphism and $\mathrm{Lip}(F)=1$, using Remark \ref{derivativeformula} we have:
\begin{align*}
d_{\bbH}(\tilde{u}\tilde{z})\leq d_{\bbG}(uz)&\leq d_{\bbG}(u)+\langle u_{h}/d_{\bbG}(u),p_{\bbG}(z)\rangle + o(d_{\bbG}(z))\\
&=d_{\bbH}(\tilde{u})+\langle u_{h}/d_{\bbH}(\tilde{u}),p_{\bbH}(\tilde{z})\rangle+o(d_{\bbH}(\tilde{z})).
\end{align*}
The opposite inequality is proved in Lemma \ref{distanceinequality}, hence $d_{\bbH}$ is differentiable at $u$.
\end{proof}

\begin{proof}[Proof of Theorem \ref{whatgroupsdiff}:]
Combining Theorem \ref{thmdifferentiabilityofdistance} and Proposition \ref{quotientdiffCC} (with $\bbH$ any given step 2 Carnot group and $\bbG$ the step 2 free Carnot group with horizontal layer of the same dimension as that of $\bbH$), we deduce that the CC distance is differentiable in horizontal directions in any step 2 Carnot group. The second part of the result is Theorem \ref{Conterex:Engel}.
\end{proof}
\begin{proposition}\label{quotientUDS}
If $\bbG$ admits a (CC-Hausdorff dimension one) universal differentiability set $N$, then the image $F(N)$ is a (CC-Hausdorff dimension one) universal differentiability set in $\bbH$.
\end{proposition}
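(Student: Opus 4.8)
The plan is to show that if $\tilde f \colon \bbH \to \bbR$ is any Lipschitz function, then $\tilde f \circ F \colon \bbG \to \bbR$ is Lipschitz on $\bbG$, hence Pansu differentiable at some point $x \in N$ by hypothesis, and then to deduce that $\tilde f$ itself is Pansu differentiable at the corresponding point $F(x) \in F(N)$. First I would observe that since $\mathrm{Lip}(F) = 1$, the composition $f := \tilde f \circ F$ satisfies $\mathrm{Lip}_{\bbG}(f) \leq \mathrm{Lip}_{\bbH}(\tilde f)$, so $f$ is Lipschitz on $\bbG$. By the hypothesis that $N$ is a UDS in $\bbG$, there is a point $x \in N$ at which $f$ is Pansu differentiable, with some $\bbG$-linear differential $L \colon \bbG \to \bbR$. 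Set $\tilde x := F(x) \in F(N)$; the goal is to produce a $\bbH$-linear map $\tilde L \colon \bbH \to \bbR$ that serves as the Pansu differential of $\tilde f$ at $\tilde x$.

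The key step is the transfer of the differentiability estimate across $F$ using Lemma \ref{lift}. Given $\tilde y \in \bbH$ close to $\tilde x$, Lemma \ref{lift} supplies $y \in \bbG$ with $F(y) = \tilde y$ and $d_{\bbG}(x,y) = d_{\bbH}(\tilde x, \tilde y)$. Then
\[
\frac{|\tilde f(\tilde y) - \tilde f(\tilde x) - L(x^{-1}y)|}{d_{\bbH}(\tilde x,\tilde y)} = \frac{|f(y) - f(x) - L(x^{-1}y)|}{d_{\bbG}(x,y)} \to 0
\]
as $\tilde y \to \tilde x$ (note $\tilde y \to \tilde x$ forces $y \to x$ along the lifted points since $d_{\bbG}(x,y) = d_{\bbH}(\tilde x, \tilde y) \to 0$). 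The remaining issue is that $L(x^{-1}y)$ must be expressed as a function of $\tilde x^{-1}\tilde y$ alone, i.e.\ we need a well-defined $\bbH$-linear map $\tilde L$ with $\tilde L(\tilde x^{-1} \tilde y) = L(x^{-1} y)$ whenever $F(y) = \tilde y$. Here I would use the structure of $L$: by \cite[Proposition 3.10]{Cas14} (as invoked in Remark \ref{derivativeformula}), any $\bbG$-linear map has the form $L(w) = \langle p_{\bbG}(w), V\rangle$ for some $V \in \bbR^{r}$. Since $F$ maps the horizontal layer basis $\mathbf X$ to $\mathbf Y$ and satisfies $p_{\bbH}(F(w)) = p_{\bbG}(w)$, the map $\tilde L(\tilde w) := \langle p_{\bbH}(\tilde w), V\rangle$ is $\bbH$-linear and satisfies $\tilde L(F(w)) = L(w)$ for all $w \in \bbG$; applying this with $w = x^{-1}y$ and using that $F$ is a homomorphism gives $\tilde L(\tilde x^{-1}\tilde y) = L(x^{-1}y)$. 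This shows $\tilde f$ is Pansu differentiable at $\tilde x \in F(N)$, so $F(N)$ is a UDS in $\bbH$.

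For the dimension statement, I would note that $F$ is Lipschitz from $(\bbG, d_{\bbG})$ to $(\bbH, d_{\bbH})$, and Lipschitz maps do not increase Hausdorff dimension, so $\dim_{\cH}^{CC} F(N) \leq \dim_{\cH}^{CC} N = 1$; combined with Remark \ref{Hausone}, which gives that any UDS has CC-Hausdorff dimension at least one, we conclude $\dim_{\cH}^{CC} F(N) = 1$. The main obstacle is the bookkeeping in the second paragraph: verifying that the linear form $V$ extracted from $L$ genuinely descends to a well-defined $\bbH$-linear map compatible with $F$, i.e.\ that the ``same'' $V$ works on the $\bbH$ side. This is where the explicit relations $F_{*}(X_i) = Y_i$ and $p_{\bbH} \circ F = p_{\bbG}$ recorded just before Lemma \ref{lift} are essential, and everything else is routine.
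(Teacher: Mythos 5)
Your proposal is correct and follows essentially the same route as the paper's proof: compose with $F$, apply the UDS property of $N$, lift points via Lemma \ref{lift} to transfer the differentiability estimate, represent the differential as $\langle p_{\bbG}(\cdot),v\rangle$ and push it forward using $p_{\bbH}\circ F=p_{\bbG}$, and handle the dimension with the Lipschitz bound plus Remark \ref{Hausone}. No gaps.
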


\begin{proof}
Let $N$ be a universal differentiability set in $\bbG$. Clearly if $N$ has CC-Hausdorff dimension one then $F(N)$ has CC-Hausdorff dimension at most one, since $F$ is Lipschitz. We show that $F(N)$ is a UDS, from which it will follow (by Remark \ref{Hausone}) that $F(N)$ has CC-Hausdorff dimension exactly one. Let $f\colon \bbH\to \bbR$ be a Lipschitz function. Then $f\circ F\colon \bbG\to \bbR$ is a Lipschitz function on $\bbG$. Since $N$ is a UDS in $\bbG$, $f\circ F$ is Pansu differentiable at a point $x\in N$. We will show $f$ is Pansu differentiable at $\tilde{x}:=F(x)\in F(N)$.

Let $L_{\bbG}$ be the Pansu differential of $f\circ F$ at $x$. Choose $v\in \bbR^{r}$ such that $L_{\bbG}(z)=\langle v,p_{\bbG}(z) \rangle$. Fix $\varepsilon>0$. Then there exists $\delta>0$ such that if $d_{\bbG}(y,x)<\delta$ then
\begin{equation}\label{diffindomain}
|f(F(y)) - f(F(x)) - \langle v,p_{\bbG}(x^{-1}y) \rangle| \leq \varepsilon d_{\bbG}(x,y).
\end{equation}

Now suppose $\tilde{y}\in \bbH$ with $d_{\bbH}(\tilde{y}, \tilde{x})<\delta$. Choose $y\in \bbG$ with $F(y)=\widetilde{y}$ and $d_{\bbG}(x,y)=d_{\bbH}(\tilde{x}, \tilde{y})<\delta$. Since $F$ acts as the identity on the first $r$ coordinates, we have
\[ p_{\bbH}(\tilde{x}^{-1}\tilde{y}) = p_{\bbH}(F(x)^{-1}F(y)) = p_{\bbH}(F(x^{-1}y)) = p_{\bbG}(x^{-1}y).\]
Hence \eqref{diffindomain} implies
\[ |f(\tilde{y}) - f(\tilde{x}) - \langle v,p_{\bbH}(\tilde{x}^{-1}\tilde{y}) \rangle| \leq \varepsilon d_{\bbH}(\tilde{x},\tilde{y}).\]
This shows $f$ is differentiable at $\tilde{x}\in F(N)$ with Pansu differential $L_{\bbH}(z) = \langle v,p_{\bbH}(z)\rangle$. It follows that $N$ is a UDS in $\bbH$. 
\end{proof}

\begin{proof}[Proof of Theorem \ref{maintheorem}:]
Combining Theorem \ref{step2freeUDS} and Proposition \ref{quotientUDS} gives the existence of a universal differentiability set of CC-Hausdorff dimension one in any step 2 Carnot group.
\end{proof}

\end{document}